\theoremstyle{plain}
\newtheorem{theorem}{Theorem}[section]
\newtheorem{lemma}[theorem]{Lemma}
\newtheorem{remark}[theorem]{Remark}
\newtheorem{proposition}[theorem]{Proposition}
\newtheorem{problem}[theorem]{Problem}
\newtheorem{corollary}[theorem]{Corollary}
\newtheorem{definition}[theorem]{Definition}
\numberwithin{equation}{section}
\newcommand{\bU}{{\mathbf U}}
\newcommand{\cB}{{\mathcal B}}
\newcommand{\cC}{{\mathcal C}}
\newcommand{\cH}{{\mathcal H}}
\newcommand{\cL}{{\mathcal L}}
\newcommand{\cM}{{\mathcal M}}
\newcommand{\cP}{{\mathcal P}}
\newcommand{\cR}{{\mathcal R}}
\newcommand{\cK}{{\mathcal K}}
\newcommand{\cS}{{\mathcal S}}
\newcommand{\cU}{{\mathcal U}}
\newcommand{\cX}{{\mathcal X}}
\newcommand{\cY}{{\mathcal Y}}
\newcommand{\bu}{{\mathbf u}}
\newcommand{\bv}{{\mathbf v}}
\newcommand{\bphi}{{\boldsymbol \phi}}
\newcommand{\bo}{{\boldsymbol 0}}
\newcommand{\bx}{{\mathbf x}}
\newcommand{\bw}{{\mathbf w}}
\newcommand{\R}{\text{\rm Re }}
\newcommand{\I}{\text{\rm Im }}
\begin{document}

\title[Convexity analysis and Schur class]{Convexity analysis and matrix-valued
Schur class over finitely connected planar domains}
\author[J.A.~Ball]{Joseph A.~Ball}
\address{Department of Mathematics,
Virginia Tech,
Blacksburg, VA 24061-0123, USA}
\email{joball@math.vt.edu}
\author[M.D.~Guerra-Huam\'an]{Mois\'es D.~Guerra Huam\'an}
\address{Department of Mathematics,
Virginia Tech,
Blacksburg, VA USA}
\email{moisesgg@math.vt.edu}

\begin{abstract} We identify the set of extreme points and apply
    Choquet theory to a normalized matrix-measure ball subject to
    finitely many linear side constraints.  As an application we
    obtain integral representation formulas for the Herglotz class of
    matrix-valued functions on a finitely-connected planar domain and
    associated continuous Agler decompositions for the matrix-valued
    Schur class over the domain.  The results give some additional
    insight into the negative answer to the spectral set problem over
    such domains recently obtained by Agler-Harland-Raphael and 
    Dritschel-McCullough.
\end{abstract}

\subjclass{46A55; 47A48, 47A56, 47A20}
\keywords{Choquet theory, positive operator measures, Schur class, 
finitely connected planar domain, $C^{*}$-convex combination, 
interior point of the $C^{*}$-convex hull}

\maketitle

\section{Introduction}   \label{S:intro}

We define the classical Schur class (operator-valued version) 
$\cS(\cU, \cY)$ to be 
the class of holomorphic functions $z \mapsto S(z)$ from the unit disk ${\mathbb D}$ 
into contraction operators  between two coefficient Hilbert spaces 
$\cU$, $\cY$.  This class has been an object of much study and a 
source of much inspiration over the last several decades due to its 
central role in a number of applications but also due to the rich 
commingling of function theory, operator theory and engineering system 
theory ideas in the description of its structure.  Let us mention 
several equivalent characterizations/points of view toward the Schur 
class:  (1) the operator $M_{S}$ of multiplication by $S$ defines a 
contraction operator on the Hardy space over ${\mathbb D}$, (2) the 
de Branges-Rovnyak kernel 
\begin{equation}   \label{deBRker}
K_{S}(z,w) = \frac{I - S(z) S(w)^{*}}{1 - z \overline{w}}
\end{equation}
is a positive kernel over ${\mathbb D}$, (3) $S$ can 
be realized as the transfer function of a conservative discrete-time 
input/state/output linear system:  
\begin{equation}   \label{realization}
    S(z) = D + z C (I - zA)^{-1} B 
 \text{ with }\bU =  \begin{bmatrix} A & B \\ C & D 
\end{bmatrix} \colon \begin{bmatrix} \cX \\ \cU \end{bmatrix} \to 
\begin{bmatrix} \cX \\ \cY \end{bmatrix} \text{ unitary}.  
\end{equation}
A major step forward in 
developing an analogous theory in several-variable settings was made 
by Agler \cite{Agler90} where what we now call the {\em Schur-Agler 
class} over the polydisk was introduced.  This class is defined as 
the class of operator-valued functions on the polydisk ${\mathbb 
D}^{d} = \{ z = (z_{1}, \dots, z_{d} ) \colon |z_{k}| < 1 \text{ for } 
k = 1, \dots, d\}$ such that not only $\|S(z)\| \le 1$ for each $z 
\in {\mathbb D}^{d}$ but also $\|S(T) \| \le 1$ for all commutative 
$d$-tuples $T = (T_{1}, \dots, T_{d})$ of operators on some Hilbert 
space $\cK$, where e.g.~$S(T)$ can be defined as
$$
S(T) = \sum_{n \in {\mathbb Z}^{d}_{+}} S_{n} \otimes T^{n} \text{ if } 
S(z) = \sum_{n \in {\mathbb Z}^{d}_{+}} S_{n} z^{n}.
$$
A new feature for this class is the analogue of positivity of the 
kernel \eqref{deBRker}: rather than a characterization in terms of the 
positivity of a single kernel, the characterization is in terms of 
being able to solve for $d$ positive kernels $K_{1}, \dots, K_{d}$ on 
the polydisk so that the so-called {\em Agler decomposition} holds:
\begin{equation}   \label{Aglerdecom}
  I - S(z) S(w)^{*} = \sum_{k=1}^{d} (1 - z_{k} \overline{w_{k}}) 
  K_{k}(z,w).
\end{equation}
Also the realization formula \eqref{realization} for the 
multivariable Schur-Agler class takes the form
\begin{equation}   \label{Agler-real}
S(z) =  D + C (I - Z(z) A)^{-1} Z(z) B \text{ with } \bU  = 
\begin{bmatrix} A & B \\ C & D \end{bmatrix} \colon \begin{bmatrix} 
\cX \\ \cU \end{bmatrix} \to \begin{bmatrix} \cX \\ \cU \end{bmatrix}
\text{ unitary }
\end{equation}
with $Z(z) = \sum_{k=1}^{d} z_{k} P_{k}$ where $P_{1}, \dots, P_{d}$ 
form a spectral family of projection operators ($P_{i} = P_{i}^{*}$,
$P_{i} P_{j} = \delta_{i,j} I_{\cX}$,  $\sum_{k=1}^{d} P_{k} = 
I_{\cX}$) on the state space $\cX$.

One of our main motivations for the present paper was to further 
develop the understanding of the Schur class $\cS(\cR)$ over a 
bounded finitely-connected planar domain $\cR$.  Here $\cS(\cR)$ denotes the 
class of holomorphic functions mapping the planar domain $\cR$ into 
the unit disk.  We shall use the notation $\cS^{N}(\cR)$ for the 
class of holomorphic functions mapping $\cR$ into contractive $N \times N$
matrices.  In the course of constructing a counterexample to the 
spectral set question over $\cR$, Dritschel and McCullough \cite{DM} 
obtained a continuous analogue of the Agler decomposition 
\eqref{Aglerdecom} for the scalar-valued Schur class $\cS(\cR)$ over 
$\cR$.  Specifically,  let $\partial_{0}, \dots, \partial_{m}$ denote 
the $m+1$ connected components of the boundary $\partial \cR$ of 
$\cR$ (with $\partial_{0}$ the component which is the boundary of the 
unbounded component of the complement of $\cR$ in the complex plane)
and let ${\mathbb T}_{\cR}$ denote the Cartesian product ${\mathbb 
T}_{\cR} = \partial_{0} \times \cdots \times \partial_{m}$. The 
coordinate functions $z_{1}, \dots, z_{d}$ appearing in the Agler 
decomposition \eqref{Aglerdecom} must be replaced by a continuum $\{ 
s_{\bx}(z) \colon  \bx \in {\mathbb T}_{\cR}\}$ of single-valued 
inner functions on $\cR$ (i.e., holomorphic in $\cR$ with modulus-1 
values on $\partial \cR$), each with  $m$ zeros in $\cR$
(the minimal number possible for a nonconstant single-valued inner 
function),
indexed by the so-called $\cR$-torus ${\mathbb T}_{\cR}$.  Then the 
result of Dritschel-McCullough can be formulated as follows:

\begin{theorem}  \label{T:DMintro}
Given any $s \in \cS(\cR)$, then there is a family $k_{\bx}(z,w)$ of 
positive kernels on $\cR$, indexed by ${\mathbb T}_{\cR}$ and 
measurable on ${\mathbb T}_{\cR}$ for each $(z,w) \in \cR \times 
\cR$, so that 
\begin{equation}   \label{DMdecom}
1 - s(z) \overline{s(w)} = 
\int_{{\mathbb T}_{\cR}} (1 - s_{\bx}(z) 
\overline{s_{\bx}(w)}) k_{\bx}(z,w) \, {\tt d} \nu(\bx).
\end{equation}
\end{theorem}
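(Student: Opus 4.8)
The plan is to pass from $\cS(\cR)$ to the Herglotz class by a Cayley transform, to establish an integral representation for Herglotz functions on $\cR$ over the $\cR$-torus ${\mathbb T}_{\cR}$ by identifying the extreme points of a normalized Herglotz class and invoking Choquet's theorem, and finally to convert that scalar representation back into the kernel decomposition \eqref{DMdecom}. For the reduction, we may assume $s$ is not a unimodular constant, and, replacing $s$ by $rs$ with $r\uparrow 1$ and extracting a limit at the end (the positive kernels $k_{\bx}$ produced below are locally uniformly bounded, so a normal-families argument passes \eqref{DMdecom} to the limit), we may assume $\|s\|_{\infty}<1$; then $h:=(1+s)/(1-s)$ is a bounded holomorphic function on $\cR$ with $\R h>0$. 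An elementary computation gives the two rank-one identities $1-s(z)\overline{s(w)}=\tfrac12(1-s(z))\overline{(1-s(w))}\bigl(h(z)+\overline{h(w)}\bigr)$ and, writing $h_{\bx}:=(1+s_{\bx})/(1-s_{\bx})$, $\,h_{\bx}(z)+\overline{h_{\bx}(w)}=2\bigl(1-s_{\bx}(z)\overline{s_{\bx}(w)}\bigr)\big/\bigl((1-s_{\bx}(z))\overline{(1-s_{\bx}(w))}\bigr)$, where $1-s_{\bx}(z)\neq0$ on $\cR$ since $s_{\bx}$ is a nonconstant inner function. Hence, once we have a finite positive measure $\nu$ on ${\mathbb T}_{\cR}$ with $h(z)=\int_{{\mathbb T}_{\cR}}h_{\bx}(z)\,{\tt d}\nu(\bx)$, substituting both identities yields \eqref{DMdecom} with
\[
k_{\bx}(z,w)=\frac{1-s(z)}{1-s_{\bx}(z)}\ \overline{\Bigl(\frac{1-s(w)}{1-s_{\bx}(w)}\Bigr)},
\]
a rank-one positive kernel on $\cR$ for each $\bx$, measurable (indeed continuous) in $\bx$ since $\bx\mapsto s_{\bx}$ is continuous for the compact-open topology.

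Next, to obtain the Herglotz representation, fix a base point $z_{0}\in\cR$ and set $\cH=\{h:\R h>0\text{ on }\cR,\ h(z_{0})=1\}$, a convex subset of the Fr\'echet space $\mathrm{Hol}(\cR)$ with the compact-open topology. Harnack's inequality bounds $\R h$ on compacta uniformly over $\cH$, whence (via the Borel--Carath\'eodory estimate) $\cH$ is a compact metrizable convex set, and Choquet's theorem produces, for each $h\in\cH$, a probability measure carried by $\mathrm{ext}(\cH)$ with barycenter $h$. It therefore suffices to show that $\mathrm{ext}(\cH)$ is contained in the family of suitably renormalized elementary Herglotz functions $h_{\bx}$, $\bx\in{\mathbb T}_{\cR}$, and that this family admits a continuous one-to-one parametrization by ${\mathbb T}_{\cR}$; pushing the representing measure forward to ${\mathbb T}_{\cR}$ and absorbing the renormalizing scalars (the purely imaginary additive shifts cancel in $h(z)+\overline{h(w)}$) then furnishes the measure $\nu$ needed above.

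The main point is the identification of $\mathrm{ext}(\cH)$. For $h\in\cH$, the positive harmonic function $u=\R h$ has, by the Riesz--Herglotz representation for positive harmonic functions on $\cR$ (whose Martin boundary is $\partial\cR$ since $\partial\cR$ is a finite union of analytic Jordan curves), a unique form $u(z)=\int_{\partial\cR}P(z,\zeta)\,{\tt d}\mu(\zeta)$ with $\mu$ a probability measure and $P$ the Poisson kernel of $\cR$. Now $u$ is the real part of a \emph{single-valued} holomorphic function on $\cR$ exactly when its conjugate differential has vanishing periods around the $m$ bounded complementary components, i.e.\ $\int_{\partial\cR}c_{j}(\zeta)\,{\tt d}\mu(\zeta)=0$ for $j=1,\dots,m$, where $c_{j}(\zeta)=\oint_{\gamma_{j}}\ast{\tt d}P(\cdot,\zeta)$. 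Thus $h\leftrightarrow\mu$ identifies $\cH$ with the set of probability measures on $\partial\cR$ satisfying these $m$ linear side constraints --- precisely a constrained normalized measure ball of the type this paper analyzes. Its extreme points are atomic, supported on at most $m+1$ points of $\partial\cR$, and the sign structure of the period functions $c_{j}$ on each component $\partial_{k}$ forces an extremal $\mu$ to charge exactly one point $\zeta_{k}$ of each $\partial_{k}$, the weights then being determined by the constraints; this exhibits $\mathrm{ext}(\cH)$ as an $(m+1)$-parameter family indexed by $\bx=(\zeta_{0},\dots,\zeta_{m})\in\partial_{0}\times\cdots\times\partial_{m}={\mathbb T}_{\cR}$. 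The associated holomorphic function is, up to renormalization, $h_{\bx}=(1+s_{\bx})/(1-s_{\bx})$: since $\R h_{\bx}$ vanishes a.e.\ on $\partial\cR$ one checks $s_{\bx}=(h_{\bx}-1)/(h_{\bx}+1)$ is inner with $s_{\bx}(\zeta_{k})=1$, and an argument-principle count around the $m+1$ boundary singularities of $h_{\bx}$ shows $s_{\bx}$ has exactly $m$ zeros in $\cR$ --- the minimal number --- so it matches the family $\{s_{\bx}\}$ fixed before the theorem. Carrying out this convexity analysis of the constrained measure ball and matching its extreme points to the $\cR$-torus is the main obstacle and the one place where the finite connectivity of $\cR$ (the period conditions, and the resulting drop to the $(m+1)$-torus) genuinely enters; the rest is formal. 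Combining the two previous steps gives $h=\int_{{\mathbb T}_{\cR}}h_{\bx}\,{\tt d}\nu(\bx)$, hence \eqref{DMdecom} for $\|s\|_{\infty}<1$, and undoing the approximation $s\rightsquigarrow rs$ completes the proof for general $s\in\cS(\cR)$.
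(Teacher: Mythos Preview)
Your proposal is correct and follows the same route as the paper (Theorems \ref{T:scalarHer-int-rep} and \ref{T:Schur-def-scalar}, the latter being the $N=1$ case of Theorem \ref{T:Schur-def}): Cayley-transform to the Herglotz class, identify the extreme points of the normalized class via constrained probability measures carrying one atom per boundary component (Theorem \ref{T:extreme-scalar-Her}, cited from \cite{AHR, DM, Pick}), apply Choquet, and transform back via the defect identities; your kernel $k_{\bx}(z,w)=\dfrac{1-s(z)}{1-s_{\bx}(z)}\,\overline{\Bigl(\dfrac{1-s(w)}{1-s_{\bx}(w)}\Bigr)}$ is exactly the paper's $h_{\bx}(z)\overline{h_{\bx}(w)}$ with $h_{\bx}(z)=(1-s(z))(1-s_{\bx}(z))^{-1}$ from the proof of Theorem \ref{T:Schur-def}. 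One minor point: the reduction to $\|s\|_{\infty}<1$ via $rs$ is unnecessary here, since in this paper $\cS(\cR)$ already consists of maps into the open disk so $1-s(z)\ne 0$ and the Cayley transform is well-defined without approximation; the paper handles the passage from the normalized case $s(t_{0})=0$ to general $s$ via the M\"obius map $L_{s(0)}$ of Propositions \ref{P:normalize}--\ref{P:Sdefect} rather than your additive/multiplicative renormalization of $h$, but in the scalar case both devices are equivalent.
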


There is also obtained in \cite{DM} a more elaborate version of the 
realization formula \eqref{realization} or \eqref{Agler-real} for the Schur class 
$\cS(\cR)$ which we do not go into here.  We also mention that these 
techniques actually lead to interpolation theorems for the various 
Schur classes:  if the function $S$ is initially given only on some 
(possibly finite) subset of its domain (${\mathbb D}$, ${\mathbb 
D}^{d}$, or $\cR$), then a necessary and sufficient condition for 
there to be an extension to the whole domain which is in the 
appropriate Schur class is that the decomposition \eqref{deBRker}, 
\eqref{Aglerdecom}, \eqref{DMdecom} hold for $z,w$ in the subset.
A dual version of the interpolation result for the class $\cS(\cR)$, 
whereby one tests the positivity of each kernel from a collection of kernels $\{ (1 
- s(z) \overline{s(w)}) k^{(\alpha)}(z,w)\}$ (where $k^{(\alpha)}(z,w)$ 
is a collection of Szeg\H{o}-type kernels indexed by $\alpha$ 
from the $m$-torus ${\mathbb T}^{m}$), was obtained earlier by 
Abrahamse \cite{Abrahamse}.

While Dritschel-McCullough indicated some results for the 
matrix-valued Schur class over $\cR$ on their way to constructing a 
counterexample to the spectral set question over $\cR$, the analogue 
of \eqref{DMdecom} for the matrix-valued case was left rather 
mysterious.  In general, extensions of scalar-valued results to 
the matrix-valued case for the Schur class over a planar domain $\cR$ 
have led to surprises:  it is known for example that the Abrahamse interpolation 
result does not extend to the matrix-valued case without the addition 
of additional matrix-valued kernels $k^{({\boldsymbol \alpha})}$ 
(see \cite{McCIEOT2001, McCP,  BBtH, DPRS}).

One of the main motivations of the present paper was to 
find an appropriate analogue of the Dritschel-McCullough 
decomposition \eqref{DMdecom} for the matrix-valued setting; such an 
analogue appears as Theorem \ref{T:Schur-def} below.
The basic idea 
in \cite{DM} for getting the decomposition \eqref{DMdecom} 
is to apply a linear-fractional change of variable on the range of 
the function to covert the problem to a problem concerning the 
Herglotz class over $\cR$  (holomorphic functions on $\cR$ with 
positive real part).  When this class is normalized by the condition 
that all such functions $f$ have the value 1 at some fixed 
point   
$t_{0} \in \cR$, it becomes a compact convex set.  Once one 
identifies the extreme points for this class, Choquet theory (see 
e.g.~\cite{Phelps} for a thorough account) can be applied to obtain 
an integral representation for a given Herglotz-class function $f$ in 
terms of the extreme points $f_{\bx}$.  The Cayley transforms of 
these extreme points for the Herglotz class turn out to be unimodular 
scalar multiples of the 
inner functions with exactly $m$ zeros appearing in the decomposition 
\eqref{DMdecom}:  $s_{\bx}(z) = \frac{f_{\bx}(z) -1}{f_{\bx}(z) + 1}$.
Explicit identification of the extreme points $f_{\bx}$ involves some 
clever function theory (see \cite{Grunsky, AHR, DM, Pick}).
The starting point is the  Poisson-kernel representation for 
positive harmonic functions. This leads to a one-to-one correspondence 
between normalized Herglotz functions on $\cR$ and probability 
measures on $\partial \cR$ which satisfy $m$ additional linear 
constraints ($m$ equal to the number of holes in $\cR$).  In this way 
extremal normalized Herglotz functions correspond to probability 
measures which are extremal in this set of linearly-constrained 
probability measures.  The problem of characterizing the extreme 
points of such a set of probability measures can be formulated in the 
setting of an abstract Borel measure space $X$ (in place of $\partial 
\cR$).  We study this general problem and give a geometric 
characterization of the extreme points in terms of $0$ being in the 
interior of the convex hull of a given collection of vectors in 
${\mathbb R}^{m}$, putting the results of Dritschel-Pickering in \cite{DP}
into a broader context. 

The extension of these ideas to the matrix-valued setting leads to new 
issues to be understood. 
Each $N \times N$-matrix valued Herglotz functions normalized to be 
the identity $I_{N}$ at the fixed point $t_{0} \in \cR$ corresponds 
to a quantum probability measure, i.e., a positive matrix-valued 
measure  $\mu$ on $\cR$ with total mass $\mu(\partial \cR)$ equal to the 
identity matrix $I_{N}$, subject to $m$ linear side constraints 
(given by integration against $m$ continuous real-valued functions 
on $\partial \cR$).  This problem in turn can be considered more 
generally, where $\partial \cR$ is replaced by a general Borel space 
$X$.  The problem then is to characterize the set of extreme points 
of the compact convex cone of quantum probability measures subject to 
$m$ linear side constraints.  It turns out that 
the special case of this problem where there are no side constraints has 
been analyzed and solved by Arveson \cite{Arv}:  extremal measures 
$\mu$ are characterized by the condition that $\mu = \sum_{k=1}^{n} 
W_{k} \delta_{x_{k}}$ (where $\delta_{x_{k}}$ is the scalar unit 
point-mass measure at the point $x_{k}$ and $W_{k} \ge 0$ is a matrix 
weight) where the family of subspaces $\{\operatorname{Ran} W_{k} \colon 1 \le k \le 
n\}$ should satisfy a condition called {\em weak independence} which, 
as suggested by the terminology, is somewhat weaker than the standard 
linear algebra notion of linear independence of subspaces (i.e., any 
collection of nonzero vectors $x_{1}, \dots, x_{d}$ with $x_{k} \in 
W_{k}$ should be a linear independent set of vectors in the standard 
sense).  We obtain an extension of Arveson's result to the constrained case
which has a geometric interpretation analogous to that in \cite{DP} 
for the scalar-case, namely:  the $0$ vector must be in the interior 
of the $C^{*}$-convex hull of a given set of matrix-tuples (see 
Remark \ref{R:noncomcon} below), thereby providing links with the general area of 
noncommutative convexity as in \cite{FM-PAMS, FM, Gregg}.
Finally we apply this general result on extreme points
to obtain a characterization (although not quite as explicit as 
in the scalar-valued case) of the extreme points of the normalized matrix-valued 
Herglotz class over a planar domain $\cR$. 

We do not treat here the transfer-function realization  and 
interpolation theory for the matrix-valued Schur class 
$\cS^{N}(\cR)$.  Such results can be obtained as part of a general 
theory of matrix-valued Schur class associated with a collection of 
matrix-valued test functions.  We address this topic beyond what 
already appears in \cite{Mo} in a separate report \cite{BG}.

The paper is organized as follows.  Following this Introduction, 
Section \ref{S:prelim} sets notation and reviews results from 
convexity theory (in particular, the Choquet-Bishop-de Leeuw theory 
on integral representations for points of a compact, convex set) 
which will be needed in the sequel.  Section \ref{S:extreme} considers the 
extreme-point problem for a linearly-constrained normalized set of 
positive matrix measures in the general measure-theory framework.  
Section \ref{S:Herglotz} introduces the function-theory setting and applies the 
theory of Section \ref{S:extreme} to obtain characterizations of 
extreme points and integral representations for normalized 
matrix-valued Herglotz-class functions over a finitely-connected 
planar domain $\cR$.  Section \ref{S:Schur} applies the 
linear-fractional change of variable to convert the results concerning 
Herglotz-class functions to results concerning Schur-class functions 
over $\cR$.  The final Section \ref{S:spectral} presents connections 
with the spectral set question over a region $\cR$: it turns out that 
the recent negative solution of the spectral set question can be 
partially explained by the lack of a simple transition formula from 
the extreme points for the scalar-valued normalized Herglotz class to 
the extreme points for the matrix-valued normalized Herglotz class 
over $\cR$ (see Corollary \ref{C:special} below).

Preliminary versions of many of the results described appear already 
in the Virginia Tech dissertation of the second author \cite{Mo}.
Finally we would like to thank David Sherman of the University of
Virginia for several helpful discussions on various topics developed
in this paper.

\section{General convexity theory}  \label{S:prelim}

A subset $\cC$ of a real linear space $C$ is said to be {\em convex} if, 
given any collection of vectors $\bu_{1}, \dots, \bu_{n}$ in 
$\cC$ and a collection of nonnegative real numbers $\lambda_{1}, 
\dots, \lambda_{n}$ with $\lambda_{1} + \cdots + 
\lambda_{n} = 1$, it happens that the convex linear combination
$ \sum_{i=1}^{n} \lambda_{1} \bu_{i}$ is again in $\cC$.
Given any subset $\cS$ of the linear space $E$, there is always a 
smallest subset of $E$ containing $\cS$, denoted as 
$\operatorname{conv} \cS$ (the {\em convex hull} of $\cS$).

A vector $\bv$ in the convex set $\cC$ is 
said to be an {\em extreme point} of $\cC$ if, whenever it is the 
case that $\bv = \lambda_{1} \bu_{1} + (1 - \lambda) \bu_{2}$ for a 
real $\lambda$ with $0 < \lambda < 1$ and $\bu_{1}$ and $\bu_{2}$ in 
$\cC$, it follows that $\bu_{1} = \bu_{2} = \bv$. The following characterization of 
extreme point is often easier to apply than the definition.

\begin{lemma} \label{L:convex}
      The point $\bv \in {\mathcal C}$ is an extreme
    point of the convex set ${\mathcal C}$ ($\bv \in \partial_{e}{\mathcal C}$) if and
    only if the following condition holds: whenever $\bu \in E$ is such
    that $\bv \pm \bu \in {\mathcal C}$, then $\bu = 0$.
\end{lemma}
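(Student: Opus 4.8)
The plan is to prove both implications directly from the definition of extreme point, using only the convexity of $\cC$ and no topology on $E$.

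For the forward implication I would assume $\bv \in \partial_{e}\cC$ and take any $\bu \in E$ with $\bv + \bu \in \cC$ and $\bv - \bu \in \cC$. Then the identity $\bv = \frac{1}{2}(\bv + \bu) + \frac{1}{2}(\bv - \bu)$ exhibits $\bv$ as a convex combination, with coefficient $\lambda = \frac{1}{2} \in (0,1)$, of the two points $\bv + \bu$ and $\bv - \bu$ of $\cC$. Extremality then forces $\bv + \bu = \bv - \bu = \bv$, whence $\bu = 0$.

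For the converse I would assume the stated cancellation condition and suppose $\bv = \lambda \bu_{1} + (1-\lambda)\bu_{2}$ with $0 < \lambda < 1$ and $\bu_{1}, \bu_{2} \in \cC$; the goal is to force $\bu_{1} = \bu_{2} = \bv$. The idea is to manufacture a symmetric perturbation of $\bv$ that still lies in $\cC$: set $\epsilon = \min\{\lambda,\, 1-\lambda\} > 0$ and $\bu = \epsilon(\bu_{1} - \bu_{2})$. A direct computation gives $\bv \pm \bu = (\lambda \pm \epsilon)\bu_{1} + (1 - \lambda \mp \epsilon)\bu_{2}$, and both coefficient pairs lie in $[0,1]$ exactly because $0 \le \lambda - \epsilon$ and $\lambda + \epsilon \le 1$ by the choice of $\epsilon$; hence $\bv \pm \bu \in \cC$ by convexity (they lie on the segment $[\bu_{1},\bu_{2}] \subseteq \cC$). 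The hypothesis now yields $\bu = 0$, i.e.\ $\epsilon(\bu_{1} - \bu_{2}) = 0$, so $\bu_{1} = \bu_{2}$; substituting back into $\bv = \lambda \bu_{1} + (1-\lambda)\bu_{2}$ gives $\bu_{1} = \bu_{2} = \bv$, which is precisely the definition of $\bv$ being extreme.

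There is no real obstacle here; the only point requiring slight care is the asymmetry of a general convex combination when $\lambda \neq \frac{1}{2}$, which is why one scales the direction $\bu_{1} - \bu_{2}$ by $\min\{\lambda, 1-\lambda\}$ rather than by $\lambda$ or $1-\lambda$ — with any larger scaling one of the two perturbed points could leave the segment $[\bu_{1},\bu_{2}]$. I would also remark explicitly that the proof uses convexity of $\cC$ alone, so the lemma is valid in the purely algebraic framework in which it is stated, with no appeal to a topology on $E$.
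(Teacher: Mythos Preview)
Your proof is correct and follows essentially the same approach as the paper: the forward direction is identical, and for the converse both arguments construct a nonzero perturbation along the direction $\bu_{1}-\bu_{2}$ and check that $\bv \pm \bu$ remain in $\cC$. The only cosmetic difference is the scaling factor---you use $\epsilon = \min\{\lambda,1-\lambda\}$ so that the perturbed points stay on the segment $[\bu_{1},\bu_{2}]$, whereas the paper takes $\bu = \lambda(1-\lambda)(\bu_{2}-\bu_{1})$ and writes $\bv \pm \bu$ as convex combinations of $\bv$ with $\bu_{2}$ and $\bu_{1}$ respectively.
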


\begin{proof}  Suppose $\bv$ is extreme and $\bv \pm \bu \in \cC$ or 
    some $\bu \in E$.  Since 
    $\bv$ is extreme, from the identity
    $$ \bv = \frac{1}{2}(\bv + \bu) + \frac{1}{2} (\bv - \bu)
    $$
 we see immediately that $\bu = 0$.
 
 For the converse it suffices to show the contrapositive:  {\em $\bv$ 
 not extreme $\Rightarrow$ there is a $\bu \ne 0$ in $E$ with $\bv \pm 
 \bu \in \cC$.}  If $\bv$ is not extreme, then we can find $\bv_{1}, 
 \bv_{2}$ in $\cC$ distinct from $\bu$ so that $\bv = \lambda \bv_{1} 
 + (1 - \lambda) \bv_{2}$.  We rearrange this as
 $$
 \lambda(\bv - \bv_{1}) = (1-\lambda) (\bv_{2} - \bv) = :\bu.
 $$
 Then 
 \begin{align*}
&   \bv + \bu = \bv + (1- \lambda) (\bv_{2} - \bv) = \lambda \bv + ( 1 
  - \lambda) \bv_{2} \in \cC, \\
 & \bv - \bu = \bv - \lambda (\bv - \bv_{1}) = (1 - \lambda) \bv + 
 \lambda \bv_{1} \in \cC
\end{align*}
from which we see that the vector $\bu$ has the needed property.
\end{proof}

In case the linear space $E$ carries a locally convex topology and 
the convex subset $\cC$ is compact in this topology, the well known 
theorem of Kre\u{\i}n-Milman (see e.g.\cite[page 75]{RudinFA}) 
asserts that $\cC$ is the closure of its set of extreme points 
$\partial_{e} \cC$.  There is a refinement of the Kre\u{\i}n-Milman 
theorem known generically as {\em Choquet theory}.  
In general let us say that a vector $\bv$ in the nonempty compact 
subset $X$ of the linear topological vector space $E$ is {\em 
represented} by the probability Borel measure $\nu$ on $X$ if it is 
the case that 
$$
  \ell(\bv) = \int_{X} \ell(\bu)\, {\tt d}\nu(\bu)
 $$
 for all continuous linear functionals $\ell \in E^{*}$.  A 
 consequence of the Hahn-Banach theorem then is that $\nu$ uniquely 
 determines the element $\bv \in E$.  The following 
theorem summarizes what we need from Choquet theory and is due mainly to
Choquet \cite{Choquet} and Bishop-de Leeuw \cite{BL}.

\begin{theorem} \label{T:Choquet}  (See \cite{Phelps} and 
    \cite[Section IV.6]{Takesaki}.)
    Suppose that $\cC$ is a compact convex subset of the linear 
    topological vector space $E$ and $\bv \in \cC$.  Then there is a probability 
    measure $\nu$ supported on the closure of the set of extreme points 
    $(\partial_{e} \cC)^{-}$ which represents $\bv$.  In case $\cC$ 
    is metrizable, then $\partial_{e}\cC$ is a Borel set and one can 
    arrange that $\nu$ is supported exactly on the set of extreme 
    points $\partial_{e} \cC$.
 \end{theorem}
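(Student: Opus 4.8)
The plan is to deduce the theorem from the existence of \emph{maximal measures} in the Choquet order together with an analysis of where such measures sit. On the set $P(\cC)$ of Borel probability measures on $\cC$, introduce the order $\mu \preceq \nu$ meaning $\int_{\cC} g \, {\tt d}\mu \le \int_{\cC} g \, {\tt d}\nu$ for every continuous convex $g \colon \cC \to {\mathbb R}$. Restrictions to $\cC$ of elements of $E^{*}$ (and constant functions) are affine, hence both convex and concave; so $\mu \preceq \nu$ already forces $\int \ell \, {\tt d}\mu = \int \ell \, {\tt d}\nu$ for all $\ell \in E^{*}$. Consequently, if starting from $\delta_{\bv}$ we can produce a $\preceq$-maximal $\nu \succeq \delta_{\bv}$, then $\int \ell \, {\tt d}\nu = \ell(\bv)$ for all $\ell \in E^{*}$, i.e.\ $\nu$ represents $\bv$, and the only remaining task is to locate the support of a maximal measure.

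First I would obtain the existence of maximal measures: $P(\cC)$ is weak-$*$ compact, and along a $\preceq$-chain the functionals $\mu \mapsto \int g\,{\tt d}\mu$ (with $g$ continuous convex) increase, so a weak-$*$ cluster point of the chain is an upper bound for it; Zorn's lemma then yields a maximal $\nu \succeq \delta_{\bv}$. Next I would bring in the upper envelope: for bounded $g$ on $\cC$, set $\hat g(x) = \inf\{h(x) : h \text{ continuous affine}, \ h \ge g\}$; when $g$ is continuous convex, $\hat g$ is concave, upper semicontinuous, and $\hat g \ge g$. The standard computation (Mokobodzki's criterion) shows $\nu$ is maximal iff $\int \hat g \,{\tt d}\nu = \int g\,{\tt d}\nu$ for every continuous convex $g$, i.e.\ $g = \hat g$ holds $\nu$-almost everywhere for each such $g$. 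The geometric content is that a point $x$ satisfies $g(x) = \hat g(x)$ for \emph{all} continuous convex $g$ precisely when $x \in \partial_{e}\cC$: if $x$ is extreme then $\delta_{x}$ is itself maximal (a probability measure whose barycenter is an extreme point is the point mass), which forces the equality; conversely if $x = \tfrac12(y+z)$ with $y \ne z$, any strictly convex $g$ gives $\hat g(x) \ge \tfrac12(\hat g(y) + \hat g(z)) \ge \tfrac12(g(y) + g(z)) > g(x)$.

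For the metrizable refinement I would use that $C(\cC)$, hence its subspace of continuous affine functions, is then separable; picking a countable point-separating family $\{f_{n}\}$ of continuous affine functions of norm at most $1$, the series $f = \sum_{n} 2^{-n} f_{n}^{2}$ defines a continuous \emph{strictly} convex function on $\cC$. By the estimate above, $\{x : f(x) = \hat f(x)\} = \partial_{e}\cC$; since $\hat f$ is upper semicontinuous, $f$ continuous and $\hat f \ge f$, this set equals $\bigcap_{n} \{\hat f - f < 1/n\}$, a $G_{\delta}$, so $\partial_{e}\cC$ is Borel. The maximal $\nu$ produced above satisfies $f = \hat f$ $\nu$-a.e., hence $\nu(\partial_{e}\cC) = 1$, which settles the metrizable case.

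The main obstacle is the non-metrizable case, where $\partial_{e}\cC$ may fail to be Borel and no single strictly convex function is available. One must prove the Bishop-de Leeuw theorem \cite{BL} that a maximal measure $\nu$ is \emph{pseudo-supported} on $\partial_{e}\cC$ --- meaning $\nu(B) = 0$ for every Baire set $B$ disjoint from $\partial_{e}\cC$ --- which replaces the one-function argument by a compactness/transfinite argument ranging over all continuous convex functions and is the genuinely delicate step. Granting it, the stated conclusion follows quickly: if a point $x_{0}$ of the closed support of $\nu$ lay outside $(\partial_{e}\cC)^{-}$, a Urysohn function vanishing on $(\partial_{e}\cC)^{-}$ and positive at $x_{0}$ would exhibit a cozero set --- a Baire set disjoint from $\partial_{e}\cC$ --- of positive $\nu$-measure, contradicting pseudo-support. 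In the applications of this paper the relevant $\cC$ is metrizable, so only the metrizable half of the argument is actually needed; for the general statement we simply quote \cite{Phelps} and \cite[Section IV.6]{Takesaki}.
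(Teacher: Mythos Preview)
The paper does not give its own proof of this theorem; it is stated as a background result with citations to Phelps and Takesaki, so there is nothing to compare against beyond those references. Your sketch is essentially the standard Choquet--Mokobodzki--Bishop--de Leeuw argument one finds in \cite{Phelps}: produce a maximal measure in the Choquet order above $\delta_{\bv}$ via Zorn, use the upper-envelope characterization of maximality, build a single continuous strictly convex function in the metrizable case to pin down $\partial_{e}\cC$ as a $G_{\delta}$, and invoke Bishop--de Leeuw pseudo-support for the general case. One small imprecision: in the converse direction of your extreme-point characterization you write ``any strictly convex $g$,'' but in an arbitrary (possibly non-metrizable) $\cC$ no global continuous strictly convex function need exist; what you actually need there is, for a given $x = \tfrac12(y+z)$ with $y\ne z$, a continuous convex $g$ strictly convex along that one segment, and $g = \ell^{2}$ for any $\ell \in E^{*}$ separating $y$ from $z$ does that job. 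With that adjustment the outline is correct, and your closing remark that only the metrizable half is used in the paper's applications is accurate.
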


In general, given a collection of vectors $\{\bu_{1},
\dots, \bu_{\kappa}\}$ in a linear space $E$ and given another vector
$\bv$ in $E$, we say that {\em $\bv$ is in the interior of the convex
hull of the set of vectors} ${\mathcal S} = \{\bu_{1}, \dots, \bu_{\kappa}\}$, written as
$$
   \bv \in \operatorname{conv}^{0} {\mathcal S},
$$
if $\bv$ can be written as a convex linear combination $\bv =
\sum_{i=1}^{\kappa} \lambda_{i} \bu_{i}$ of the elements of  ${\mathcal S}$
with the coefficients satisfying $\lambda_{i} > 0$,
$\sum_{i=1}^{\kappa} \lambda_{i} = 1$ uniquely determined.  We will
be particularly interested in the case when the zero vector
$\bo$ in $E$ is in the convex hull of ${\mathcal S}$.
In general there are several equivalent formulations of the condition
that
$\bo \in \operatorname{conv}^{0}{\mathcal S}$.

\begin{proposition}   \label{P:conv0}
    Given a finite subset ${\mathcal S} = \{\bu_{1}, \dots,
    \bu_{n}\}$ of a linear space $E$, suppose that the zero vector
    $\bo$ is a proper convex combination of $\bu_{1}, \dots, \bu_{n}$
    in the linear space $E$:
    $$
    \bo = \sum_{i=1}^{n} \lambda_{i} \bu_{i} \text{ with }
    \lambda_{i}>0 \text{ for all } i \text{ and } \sum_{i=1}^{n}
    \lambda_{i} = 1.
    $$
    Then the following conditions are equivalent.
    \begin{enumerate}
	\item $\bv \in \operatorname{conv}^{0} {\mathcal S}$, i.e.,
the real numbers $\lambda_{1}, \dots, \lambda_{n}$ are
uniquely determined by the conditions
\begin{equation}   \label{sys1}
  \lambda_{i} > 0 \text{ for all } i, \quad \sum_{i=1}^{n}\lambda_{i} = 1,
  \quad  \sum_{i=1}^{n} \lambda_{i} \bu_{i} = \bo.
\end{equation}

\item The linear
subspace of ${\mathbb R}^{n}$ consisting of vectors
${\mathbf c} = (c_{1}, \dots, c_{n})$ such that $c_{1}\bu_{1} + \cdots
+ c_{n} \bu_{n} = 0 \in E$ is one-dimensional (and hence is spanned
by $\lambda = (\lambda_{1}, \dots, \lambda_{n})$).

\item The only
solution ${\mathbf c} = (c_{1}, \dots, c_{n})$ of the system of
equations
\begin{equation}  \label{sys3}
 \sum_{i=1}^{n} c_{n} = 0, \quad \sum_{i=1}^{n} c_{i} {\mathbf u}_{i}
 = \bo
 \end{equation}
 is ${\mathbf c} = (0, \dots, 0)$.
\end{enumerate}
\end{proposition}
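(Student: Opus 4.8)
The plan is to recast all three conditions as statements about two explicit linear subspaces of ${\mathbb R}^{n}$ and then chase the equivalences by elementary linear algebra. Introduce the subspace
$$
V = \{ \mathbf{c} \in {\mathbb R}^{n} \colon c_{1}\mathbf{u}_{1} + \cdots + c_{n}\mathbf{u}_{n} = \bo \in E \}
$$
(the one figuring in condition (2)) together with the hyperplane $H = \{ \mathbf{c} \in {\mathbb R}^{n} \colon \sum_{i=1}^{n} c_{i} = 0 \}$. The standing hypothesis that $\bo = \sum_{i} \lambda_{i}\mathbf{u}_{i}$ with each $\lambda_{i} > 0$ and $\sum_{i}\lambda_{i} = 1$ says exactly that $\lambda = (\lambda_{1}, \dots, \lambda_{n})$ lies in $V \setminus H$; in particular $\lambda \neq \bo$, so $\dim V \ge 1$, and condition (3) — the assertion that the only solution of \eqref{sys3} is $\bo$ — is precisely the statement $V \cap H = \{\bo\}$.

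First I would dispose of (2) $\Leftrightarrow$ (3). Since $\lambda \in V$ but $\lambda \notin H$, the subspace $V$ is not contained in the codimension-one subspace $H$, so the restriction to $V$ of the linear functional $\mathbf{c} \mapsto \sum_{i} c_{i}$ is nonzero; hence $V \cap H$ has codimension exactly one in $V$, i.e.\ $\dim(V \cap H) = \dim V - 1$. Therefore $V \cap H = \{\bo\}$ holds if and only if $\dim V = 1$, and in that case $V = {\mathbb R}\lambda$ because $\lambda$ is a nonzero element of $V$. This gives (2) $\Leftrightarrow$ (3).

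Next I would prove (1) $\Leftrightarrow$ (3) by showing that each negation implies the other. The solution set of the system \eqref{sys1} is $\{\mathbf{c} \in V \colon \sum_{i} c_{i} = 1 \text{ and } c_{i} > 0 \text{ for all } i\}$, and it contains $\lambda$. If (3) fails, choose $\mathbf{c} \ne \bo$ in $V \cap H$; since every $\lambda_{i}$ is strictly positive, for all sufficiently small $\varepsilon > 0$ the vector $\lambda + \varepsilon\mathbf{c}$ still has all coordinates positive, still lies in $V$, and still satisfies $\sum_{i}(\lambda_{i} + \varepsilon c_{i}) = 1$, so it is a solution of \eqref{sys1} different from $\lambda$ and (1) fails. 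Conversely, if (1) fails there is a solution $\mu \ne \lambda$ of \eqref{sys1}; then $\mathbf{c} = \mu - \lambda$ is nonzero, lies in $V$ (both $\mu$ and $\lambda$ do), and satisfies $\sum_{i} c_{i} = 1 - 1 = 0$, so $\mathbf{c} \in (V \cap H)\setminus\{\bo\}$ and (3) fails. Combining this with the previous paragraph closes the loop of equivalences.

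I do not anticipate any real obstacle; the proposition is essentially a repackaging of the rank–nullity relationship between $V$ and $H$. The one point deserving care is the implication (3) $\Rightarrow$ (1), where one must check that the perturbed vector $\lambda + \varepsilon\mathbf{c}$ keeps strictly positive coordinates — this is where the hypothesis that $\bo$ is a \emph{proper} convex combination (all $\lambda_{i} > 0$) is used in an essential way, and it is the reason that hypothesis is built into the statement.
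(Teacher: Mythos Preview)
Your proof is correct and follows essentially the same route as the paper's: both arguments hinge on the perturbation $\lambda + \varepsilon\mathbf{c}$ to pass from a nonzero $\mathbf{c}\in V\cap H$ to a second solution of \eqref{sys1}, and on taking the difference of two solutions to go back; the paper organizes these as a cycle of contrapositives while you split off (2)$\Leftrightarrow$(3) via the clean codimension count, but the substance is identical. One harmless slip in your closing commentary: the perturbation step establishes $\text{not }(3)\Rightarrow\text{not }(1)$, i.e.\ the direction $(1)\Rightarrow(3)$, not $(3)\Rightarrow(1)$ as you wrote.
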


\begin{proof}  We show (not (1)) $\Rightarrow$ (not (2))
    $\Rightarrow$ (not (3)) $\Rightarrow$ (not (1)).

    \textbf{(not (1)) $\Rightarrow$ (not (2)):} Suppose that $\lambda
    = (\lambda_{1}, \dots, \lambda_{n})$ and $\lambda' =
    (\lambda'_{1}, \dots, \lambda'_{n})$ are two distinct elements of
    ${\mathbb R}^{n}$ satisfying the conditions in (1).  Set $c_{i} =
    \lambda_{i} - \lambda'_{i}$ and ${\mathbf c} = (c_{1}, \dots,
    c_{n})$.  Then ${\mathbf c}$ is a second nonzero solution of $c_{1}
    \bu_{1} + \cdots + c_{n} \bu_{n} = \bo$.  Since $\sum_{i=1}^{n}
    c_{i} = \sum_{i-1}(\lambda_{i} - \lambda'_{i}) = 0$ while
    $\lambda_{i} > 0$ for all $i$, we see that ${\mathbf c}$ is
    linearly independent of $\lambda$. Hence the set of solutions
    ${\mathbf c} = (c_{1}, \dots, c_{n})$ of $\sum_{i=1}^{n} c_{i}
    \bu_{i} = \bo$ has dimension at least 2, in contradiction to
    (2).

     \textbf{(not (2)) $\Rightarrow$ (not (3)):}  If the space of
     vectors ${\mathbf c} = (c_{1}, \dots, c_{n})$ in ${\mathbf
     R}^{n}$ given by $c_{1} \bu_{1} + \cdots + c_{n} \bu_{n}$ has
     dimension at least 2, then by the null-kernel theorem from
     Linear Algebra we can find a nonzero such vector which satisfies
     the single additional linear  constraint $c_{1} + \cdots + c_{n} =
     0$, in contradiction with (3).

      \textbf{(not (3)) $\Rightarrow$ (not (1)):}  Assume that
      ${\mathbf c} = (c_{1}, \dots, c_{n}) \in {\mathbb R}^{n}$ is a
      nonzero solution of \eqref{sys3}.  Set $\lambda'  = (\lambda_{1} +
      \epsilon c_{1}, \dots, \lambda_{n} + \epsilon c_{n})$ for some
      $\epsilon > 0$.  Then as long as $\epsilon$ is chosen
      sufficiently small, $\lambda'$ is a second solution of
      \eqref{sys1}, in contradiction with (1).
    \end{proof}

\section{An extreme-point problem for a constrained normalized ball
of matrix measures}  \label{S:extreme}

In this section we consider the following general extreme-point
problem which is central for our analysis of the matrix-valued Herglotz and Schur
classes of holomorphic functions over a finitely connected planar
domain discussed in the next section.  We suppose that we are given
a compact Hausdorff space $X$. We let $M(X)$ denote the space of
complex Borel measures on $X$ and $C_{{\mathbb R}}(X)$ denote the
space of real-valued continuous functions on $X$. For $N$ a positive
integer, $M(X)^{N \times N}$ then denotes the
space of complex $N \times N$ matrix-valued  Borel measures on $X$.
We will also have occasion to use $[M(X)^{N \times N}]_{h}$ to denote
complex Hermitian $N \times N$ matrix-valued measures and $[M(X)^{N
\times N}]_{+}$ the subset of $[M(X)^{N \times N}]_{h}$ consisting of
positive matrix measures.
We suppose that we are also given a collection $\bphi = \{ \phi_{1}, 
\dots, \phi_{m}\}$ of $m$ complex-valued continuous
functions on $X$ (i.e., $\phi_{1}, \dots, \phi_{m} \in C_{\mathbb
R}(X)$).  We then let ${\mathcal C}(X, N, \bphi)$ be the subset of $[M(X)^{N \times
N}]_{h}$ given by
\begin{align}
    {\mathcal C}(X,N, \bphi) =  &  \Big\lbrace \mu \in M(X)^{N \times N} \colon \mu(\Delta)
    \ge 0 \text{ for all Borel }, \, \mu(X) = I, \text{ and } \notag  \\
    & \quad  \mu(\phi_{r}):=
    \int_{X} \phi_{r}(x) \, {\tt d}\mu(x) = 0 \text{ for } r = 1, \dots,
    m \Big\rbrace.
    \label{cC}
\end{align}

Note that ${\mathcal C}(X,N, \bphi)$ is a convex subset of the real 
Banach space $[M(X)^{N \times N}]_{h}$
which is compact in the weak-$*$ topology on [$M(X)^{N \times N}]_{h}$
induced by its duality with respect to
the real Banach space $C_{{\mathbb R}}(X)^{N \times N}$.
In view of the Kre\u{\i}n-Milman theorem and the results discussed in
Section \ref{S:prelim}, it is then natural to pose the problem:

\begin{problem}  \label{P:extreme}  Given a data set $(X, N, \bphi)$ as above, 
    characterize the set of extreme points
    of the associated compact, convex set ${\mathcal C}(X,N, \bphi)$ given by
    \eqref{cC}.
    \end{problem}

Simple examples show that it is possible that ${\mathcal C}$ is
empty: for example, take $X$ equal to the unit interval $[0,1]$,
$N=1$, $m=1$ with $\phi_{1}(x) = 1$.  Then the condition that
$1=\mu(X) =  \int_{X} {\tt d}\mu(x)$ and that $0 = \int_{X}
\phi_{1}(x) {\tt d} \mu(x) = \int_{X} {\tt d} \mu(x)$ are
contradictory.  In the discussion to follow we will implicitly assume
that ${\mathcal C} \ne \emptyset$; in all examples arising from some
natural context, it is the case that ${\mathcal C} \ne \emptyset$.

The following result is a first step toward obtaining more definitive
solutions for various special cases of interest.

\begin{theorem}  \label{T:mu-form}
    Suppose that $\mu \in [M(X)^{N \times N}]_{h}$
    is an extreme point of ${\mathcal C(X,N, \bphi)}$ \eqref{cC}.
    Then there is a natural number $n$ with
    $1 \le n \le (m+1)N^{2}$, $n$ distinct  points $x_{1}, \dots, x_{n}$ in
    $X$, and $n$ positive semidefinite $N \times N$ matrices $W_{1},
    \dots, W_{n}$ subject to the system of linear equations
    \begin{equation}   \label{W-sys}
	 \sum_{r=1}^{n} W_{i}   = I, \quad
	 \sum_{r=1}^{n} \phi_{i}(x_{r}) W_{r}  = 0 \text{ for } i = 1,
    \dots, m
    \end{equation}
 so that $\mu$ has the form
 \begin{equation}   \label{mu-form}
   \mu = \sum_{j=1}^{n} W_{j} \delta_{x_{j}}
 \end{equation}
 where $\delta_{x_{j}}$ is the scalar-valued measure equal to the unit point-mass
 at the point $x_{j}$.
    \end{theorem}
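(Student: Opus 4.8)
The plan is to exploit the finite dimensionality of the linear constraints to show that an extreme point of $\cC(X,N,\bphi)$ must be finitely atomic. First I would observe that the constraint map
$$
\mu \mapsto \Big( \mu(X), \, \mu(\phi_{1}), \dots, \mu(\phi_{m}) \Big)
$$
takes values in the real vector space $[M_{N}]_{h} \times (M_{N})^{m}$ of real dimension $(m+1)N^{2}$ (here $\mu(\phi_{r})$ need not be Hermitian, but each complex $N\times N$ matrix has $2N^{2}$ real parameters, and one may alternatively split $\phi_{r}$ into real and imaginary parts; in any case the ambient constrained space is finite dimensional). The set $\cC(X,N,\bphi)$ is the intersection of the cone $[M(X)^{N\times N}]_{+}$ with an affine subspace of finite codimension. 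The key structural fact to prove is that any $\mu\in\cC(X,N,\bphi)$ with infinite support (or, more precisely, with more than $(m+1)N^{2}$ ``independent directions'') fails the criterion of Lemma \ref{L:convex}: one can perturb $\mu$ to $\mu\pm\rho$ staying inside the cone and preserving all the linear constraints, with $\rho\neq 0$.

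The main step is a dimension-counting perturbation argument. Suppose $\mu\in\cC(X,N,\bphi)$ is extreme. I would first reduce to showing $\mu$ is atomic: if $\mu$ had a nonatomic part, then by the Lyapunov-type / nonatomicity argument one could split the support into two Borel pieces each carrying a nontrivial portion of the total mass, producing a signed Hermitian matrix perturbation $\rho = \mu|_{\Delta_{1}} - c\,\mu|_{\Delta_{2}}$ (with $c$ chosen so that the real vector $(\rho(X),\rho(\phi_{1}),\dots,\rho(\phi_{m}))$ lands in a prescribed finite-codimension subspace); since the constraint target space has finite dimension $(m+1)N^{2}$ but a nonatomic measure supports infinitely many linearly independent ``local perturbations,'' such a nonzero $\rho$ with $\mu\pm\rho\in\cC$ exists, contradicting extremality. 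Hence $\mu=\sum_{j} W_{j}\delta_{x_{j}}$ for finitely or countably many distinct $x_{j}$ and positive semidefinite $W_{j}$; the constraints $\sum_{j}W_{j}=I$ and $\sum_{j}\phi_{i}(x_{j})W_{j}=0$ are then exactly \eqref{W-sys}. To bound the number of atoms by $n\le(m+1)N^{2}$: consider the real-linear map from $\bigoplus_{j}[M_{N}]_{h}$ (one Hermitian slot per atom) into the constraint space $[M_{N}]_{h}\times(M_{N})^{m}$ sending $(V_{j})_{j}\mapsto\big(\sum_{j}V_{j},\,\sum_{j}\phi_{1}(x_{j})V_{j},\dots,\sum_{j}\phi_{m}(x_{j})V_{j}\big)$; if the number of atoms exceeds the dimension $(m+1)N^{2}$ of the target, this map has nontrivial kernel, so there is a nonzero tuple $(V_{j})$ of Hermitian matrices with $\sum_{j}V_{j}=0$ and $\sum_{j}\phi_{i}(x_{j})V_{j}=0$; taking $\rho=\sum_{j}V_{j}\delta_{x_{j}}$ and noting $\mu\pm\varepsilon\rho\in\cC$ for small $\varepsilon>0$ (positivity of $W_{j}\pm\varepsilon V_{j}$ on each atom is preserved since one may discard atoms where $W_{j}=0$ and shrink $\varepsilon$) again contradicts Lemma \ref{L:convex}.

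I expect the main obstacle to be the nonatomicity reduction step: showing that an extreme $\mu$ cannot have a continuous (nonatomic) part. The clean way is to invoke a matrix-valued Lyapunov convexity theorem, or to argue directly: if $E\subseteq X$ is Borel with $\mu|_{E}$ nonzero and nonatomic, one wants a Borel subset $E_{1}\subseteq E$ and a nonzero Hermitian perturbation supported on $E$ that is ``$\mu$-absolutely continuous with bounded density'' and kills all $m+1$ vector constraints; this is a standard but slightly delicate measure-theoretic argument combining nonatomicity (to get enough degrees of freedom) with the finite dimensionality of the constraint target. Once one knows $\mu$ is purely atomic the rest is the elementary linear-algebra dimension count sketched above, and the bound $n\le(m+1)N^{2}$ drops out immediately; one should also remark $n\ge 1$ since $\mu(X)=I\neq 0$.
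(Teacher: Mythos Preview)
Your overall strategy---perturb within the positive cone while killing the finitely many linear constraints, then invoke Lemma~\ref{L:convex}---matches the paper's.  However, the argument as written has a genuine gap in the atomic step, and the nonatomic reduction is made unnecessarily hard.

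The flaw is in the sentence ``positivity of $W_{j}\pm\varepsilon V_{j}$ on each atom is preserved since one may discard atoms where $W_{j}=0$ and shrink $\varepsilon$.''  This is false in general: if $W_{j}$ is nonzero but \emph{singular}, an arbitrary Hermitian $V_{j}$ need not satisfy $W_{j}\pm\varepsilon V_{j}\ge 0$ for any $\varepsilon>0$ (take $W_{j}=\left[\begin{smallmatrix}1&0\\0&0\end{smallmatrix}\right]$, $V_{j}=\left[\begin{smallmatrix}0&0\\0&1\end{smallmatrix}\right]$).  So the admissible perturbation space at each atom is not all of $[M_{N}]_{h}$ but only those $V_{j}$ that live on $\operatorname{Ran}W_{j}$.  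Consequently your dimension count is wrong: if it did work with the full $\bigoplus_{j}[M_{N}]_{h}$ (of real dimension $nN^{2}$) mapping into a target of dimension $(m+1)N^{2}$, you would conclude $n\le m+1$, not $n\le (m+1)N^{2}$.  That stronger bound is actually false for $N>1$: already in the unconstrained case $m=0$ there are extreme quantum probability measures with up to $N^{2}$ atoms (see Corollary~\ref{C:Arveson} and Remark~\ref{R:Arveson}).

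The paper avoids both difficulties---and in particular dispenses with any separate Lyapunov-type nonatomic reduction---by choosing the perturbation directions to be \emph{scalar} multiples of restrictions of $\mu$ itself.  Concretely: if $\mu$ is not of the stated form, one can find $\kappa>(m+1)N^{2}$ pairwise disjoint Borel sets $\Delta_{1},\dots,\Delta_{\kappa}$ with $\mu(\Delta_{j})\ne 0$ (this single observation covers simultaneously the nonatomic case and the ``too many atoms'' case).  Set $\mu_{j}=\mu|_{\Delta_{j}}$.  These are $\kappa$ linearly independent elements of $[M(X)^{N\times N}]_{h}$, and the constraint map into ${\mathbb R}^{(m+1)N^{2}}$ must therefore have nontrivial kernel on their span; a nonzero $\nu=\sum_{j}c_{j}\mu_{j}$ in that kernel satisfies $\nu(X)=0$, $\nu(\phi_{r})=0$, and $(\mu\pm\epsilon\nu)|_{\Delta_{j}}=(1\pm\epsilon c_{j})\mu_{j}\ge 0$ for small $\epsilon$, so positivity is automatic.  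Using restrictions of $\mu$ rather than arbitrary Hermitian matrices at atoms is exactly what makes the positivity step trivial and yields the correct bound $(m+1)N^{2}$.
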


    \begin{proof}
	It suffices to show that any extreme point $\mu =
	[\mu_{ij}]_{i,j=1, \dots, N}$ has the form
	\eqref{mu-form};  conditions \eqref{W-sys} then follow just
	by the condition that $\mu$ is an element of ${\mathcal 
	C}(X,N,\bphi)$.
	By way of contradiction, suppose that $\mu \in [M(X)^{N \times N}]_{+}$ is a positive
	matrix measure which is not of the form \eqref{mu-form}.  We
	then must show that $\mu$ is not extreme.
	
	If $\mu$ is not of the form \eqref{mu-form} with $1 \le n \le
	(m+1)N^{2}$, then  there are $\kappa$ (with $\kappa >
	(m+1)N^{2}$) disjoint Borel sets $\Delta_{1}, \dots,
	\Delta_{\kappa}$ with $\mu(\Delta_{j}) \ne 0$. 
	Define new measures $\mu_{1}, \dots,	\mu_{\kappa}$ by
	$$
	  \mu_{j}(\Delta) = \mu(\Delta \cap \Delta_{j}) \text{ for }
	  j = 1, \dots, \kappa.
	$$
 Then the collection $\{ \mu_{1}, \dots, \mu_{\kappa}\}$ is linearly
 independent in the real vector space $[M(X)^{N \times N}]_{h}$ of
 complex-Hermitian matrix-valued Borel measures on $X$.
 Now define real linear functionals
 on $[M(X)^{N\times N}]_{h}$ by
 \begin{align*}
& L_{i} \colon \mu\mapsto \mu_{ii}(X),\;1\leq i\leq N, \\
&  L_{\text{Re},ij}:\mu\mapsto Re\;\mu_{ij}(X),\;1\leq i<j\leq N, \\
 &  L_{\text{Im},ij}:\mu\mapsto Im\;\mu_{ij}(X),\;1\leq i<j\leq N, \\
&  L_{i,r}:\mu\mapsto \mu_{ii}(\phi_{r}),\;1\leq i\leq N,1\leq r\leq m,  \\
&  L_{\text{Re},ij,r}:\mu\mapsto Re\;\mu_{ij}(\phi_{r}),\;1\leq i<j\leq N,1\leq
r\leq m, \\
&  L_{\text{Im},ij,r}:\mu\mapsto Im\;\mu_{ij}(\phi_{r}),\;1\leq i<j\leq N,1\leq r\leq m.
\end{align*}
Note that in total there are
$$N+\frac{N(N-1)}{2}+\frac{N(N-1)}{2}+Nm+\frac{N(N-1)}{2}m+\frac{N(N-1)}{2}m=N^2(m+1)$$
such real linear functionals.
Note that for $1\leq i\leq j\leq N$ and $1\leq r\leq m$ we have
\begin{equation}\label{estrella}
    \mu_{ji}(X)=\mu_{ij}(X)^*
\text{ and }\mu_{ji}(\phi_{r})=\mu_{ij}(\phi_{r})^*.
\end{equation}
We now define a real linear map $L$ from $[M(X)^{N \times N}]_{h}$ to
${\mathbb R}^{(m+1)N^{2}}$ by
$$
L(\mu) = \begin{bmatrix} \operatorname{col}_{j}
\{ L_{j}(\mu) \colon 1 \le j \le N \}  \\
\operatorname{col}_{i,j}
\{L_{\text{Re}, ij}(\mu) \colon 1 \le i < j \le N\}  \\
\operatorname{col}_{i,j} \{ L_{\text{Im},
ij}(\mu) \colon 1 \le i< j \le N\}  \\
\operatorname{col}_{i,r}
\{L_{i,r}(\mu) \colon 1 \le i \le N,\, 1 \le r \le m\}   \\
\operatorname{col}_{i,j,r} \{
L_{\text{Re}, ij, r}(\mu)\colon 1 \le i < j \le N,\, 1 \le r \le m\}  \\
\operatorname{col}_{i,j,r}
\{L_{\text{Im}, ij, r}(\mu)  \colon 1 \le i < j \le N,\, 1 \le r \le
m\}
\end{bmatrix}
$$
where we use the notation $\operatorname{col}\{X_{j} \colon 1 \le j
\le N\}$ to denote the column matrix
$\operatorname{col}\{X_{j} \colon 1 \le j \le N\} = \left[
\begin{smallmatrix} X_{1} \\ \vdots \\ X_{N} \end{smallmatrix}
    \right]$.
Consider the restriction of $L$ to the $\kappa$-dimensional subspace
${\mathcal M}: = \operatorname{span} \{\mu_{1}, \dots, \mu_{\kappa}\}$.
Since $\kappa > (m+1)N^{2}$, as a consequence of the null-kernel
theorem from linear algebra we see that there exists a nonzero measure
$\nu = \sum_{\ell = 1}^{\kappa} c_{\ell} \mu_{\ell} \in \cM$
with $L(\nu) = 0$.  Consequently the matrix measure $\nu = [\nu_{ij}]_{i,j=1, \dots N}$
satisfies $\nu_{ii}(X) = 0$ for all $i =1, \dots, N$ and $\nu_{ij}(X)
= 0$ for $1 \le i < j \le N$.  From \eqref{estrella} we see that
 $\nu_{ij}(X) = 0$ for all  $1 \le i \le j
\le N$ as well and we conclude that
\begin{equation}  \label{nuX=0}
\nu(X) = 0.
\end{equation}
In a similar
way we see that in addition
\begin{equation}  \label{nufr=0}
    \nu(\phi_{r}) = 0 \text{ for } r=1, \dots, m
    \end{equation}
as well.

We next choose $\epsilon > 0$ so that $\epsilon < \operatorname{min}
\left\{ \frac{1}{c_{j}} \colon j \text{ with } c_{j} \ne 0\right\}$
where $c_{1}, \dots, c_{\kappa}$ are the coefficients in the
representation $\nu = c_{1} \mu_{1} + \cdots + c_{\kappa} \mu_{\kappa}$
for $\nu$ as an element of the space ${\mathcal M} =
\operatorname{span} \{\mu_{j} \colon j = 1, \dots,
\kappa\}$.  Then by construction
$$
  1 \pm \epsilon c_{j} \ge 0 \text{ for } j = 1, \dots, \kappa.
$$
It follows that $ (\mu \pm \epsilon\nu)(X) =
I$, $(\mu \pm \epsilon \nu)(\Delta) \ge 0$ for all Borel $\Delta$ and
$(\mu \pm \epsilon \nu)(\phi_{r}) = \mu(\phi_{r}) = 0$ for $1 \le r \le m$,
i.e., $\mu \pm \epsilon \nu \in {\mathcal C}(X,N, \bphi)$.  Since it is also the
case that $\epsilon \nu$ is not the zero element of $[M(X)^{N \times
N}]_{h}$, it follows as a consequence of Lemma \ref{L:convex} that
$\mu \notin \partial_{e}{\mathcal C}(X,N,\bphi)$, as needed to be shown.
\end{proof}

\subsection{The scalar-valued case: $N=1$}  \label{S:extreme-scalar}

We now analyze Problem \ref{P:extreme} for the scalar-valued case
($N=1$).    The
following result gives a complete characterization of
$\partial_{e}{\mathcal C}$  (${\mathcal C}$ as in \eqref{cC}) for the
scalar case ($N=1$).

\begin{theorem} \label{T:extreme-scalar}
    Suppose  that we are given a compact Hausdorff space $X$ along
    with $m$ real-valued continuous functions $\bphi = \{\phi_{1}, \dots,
    \phi_{m}\}$ and we let ${\mathcal C}(X,1, \bphi)$ be the associated compact
    convex set of scalar measures given by \eqref{cC} (with $N=1$).  Suppose that the
    positive scalar  measure $\mu$ has the form \eqref{mu-form} (tailored to
    the scalar case):
    \begin{equation}   \label{mu-form-scalar}
    \mu = \sum_{j=1}^{\kappa} w_{j} \delta_{x_{j}}
    \end{equation}
    where $x_{1}, \dots, x_{n}$ are distinct points in $X$ ($1 \le n
    \le m+1$) and $w_{j} \in {\mathbb R}$ are subject to
    \begin{equation}  \label{scalar-constraints}
    w_{j} > 0 \text{ for } 1 \le j \le n, \quad
    \sum_{j=1}^{n} w_{j} = 1, \quad \sum_{j=1}^{n}  \phi_{i}(x_{j})
    w_{j} = 0 \text{ for } i=1, \dots, m.
    \end{equation}
    Denote by $\bphi(x_{j})$ the vector $\bphi(x_{j}) = \left[
    \begin{smallmatrix} \phi_{1}(x_{j}) \\ \vdots \\ \phi_{m}(x_{j})
	\end{smallmatrix} \right]$ in ${\mathbb R}^{m}$ for $j = 1,
	\dots, n$.
   Then $\mu \in \partial_{e} {\mathcal C}(X,1,\bphi)$ if and only if
   $0 = \sum_{j=1}^{n} w_{j} \bphi(x_{j})$ is an interior point of
   the convex hull of $\{ \bphi(x_{1}), \dots, \bphi(x_{n}) \}$ in
   ${\mathbb R}^{m}$.
    \end{theorem}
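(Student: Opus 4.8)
The plan is to combine Lemma~\ref{L:convex} with a short measure-theoretic reduction and then invoke Proposition~\ref{P:conv0}. By Theorem~\ref{T:mu-form} every extreme point of ${\mathcal C}(X,1,\bphi)$ is automatically of the form \eqref{mu-form-scalar}, so the substantive content is to decide which $\mu$ of this form are extreme. Throughout, the ambient real linear space is $E=[M(X)^{1\times 1}]_{h}$, the space of real (signed) Borel measures on $X$, and by Lemma~\ref{L:convex} the measure $\mu$ fails to be extreme exactly when there is a nonzero $\eta\in E$ with $\mu\pm\eta\in{\mathcal C}(X,1,\bphi)$.

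\textbf{Step 1: reduce the extreme-point test to a homogeneous linear system.} First I would show that any admissible perturbation $\eta$ must be carried by the finite set $\{x_{1},\dots,x_{n}\}$: the requirement $\mu\pm\eta\ge 0$ forces $|\eta(\Delta)|\le\mu(\Delta)$ for every Borel $\Delta$, hence $|\eta|\le\mu$ as total-variation measures, and since $\mu=\sum_{j}w_{j}\delta_{x_{j}}$ is supported on $\{x_{1},\dots,x_{n}\}$ so is $\eta$. Thus $\eta=\sum_{j=1}^{n}c_{j}\delta_{x_{j}}$ with $c_{j}\in{\mathbb R}$. Because $w_{j}>0$ for every $j$, the positivity side conditions $w_{j}\pm c_{j}\ge 0$ can always be met by rescaling $\eta$ by a small positive factor, so they never obstruct the existence of a nonzero admissible $\eta$; the binding conditions are $(\mu\pm\eta)(X)=1$ and $(\mu\pm\eta)(\phi_{i})=0$, which translate respectively into $\sum_{j}c_{j}=0$ and $\sum_{j}c_{j}\bphi(x_{j})=0$ in ${\mathbb R}^{m}$. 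Consequently $\mu\in\partial_{e}{\mathcal C}(X,1,\bphi)$ if and only if the only real tuple $(c_{1},\dots,c_{n})$ with $\sum_{j}c_{j}=0$ and $\sum_{j}c_{j}\bphi(x_{j})=0$ is the zero tuple.

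\textbf{Step 2: recognize this as Proposition~\ref{P:conv0}.} The condition just obtained is precisely condition (3) of Proposition~\ref{P:conv0} applied to the finite set ${\mathcal S}=\{\bphi(x_{1}),\dots,\bphi(x_{n})\}$ in $E={\mathbb R}^{m}$. The hypothesis \eqref{scalar-constraints} says exactly that $\bo=\sum_{j}w_{j}\bphi(x_{j})$ is a proper convex combination of the $\bphi(x_{j})$, so Proposition~\ref{P:conv0} applies and its condition (3) is equivalent to its condition (1), i.e.\ $\bo\in\operatorname{conv}^{0}{\mathcal S}$ --- that $0=\sum_{j}w_{j}\bphi(x_{j})$ is an interior point of the convex hull of $\{\bphi(x_{1}),\dots,\bphi(x_{n})\}$. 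Chaining the two equivalences gives the theorem. The only point requiring genuine care is the claim in Step 1 that an admissible perturbation is carried by the atoms of $\mu$, which rests on the elementary observation that $\mu\pm\eta\ge0$ dominates the total variation of $\eta$ by $\mu$; everything else is bookkeeping, matching the normalization $\mu(X)=1$ and the $m$ moment constraints $\mu(\phi_{i})=0$ to the hypotheses of Proposition~\ref{P:conv0} and observing that $w_{j}>0$ is exactly what makes the positivity side conditions non-binding for small perturbations.
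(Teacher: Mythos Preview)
Your proof is correct and follows essentially the same strategy as the paper's: reduce via Lemma~\ref{L:convex} to a question about perturbations supported on $\{x_{1},\dots,x_{n}\}$, and then identify the resulting linear system with condition~(3) of Proposition~\ref{P:conv0}. The only cosmetic difference is that the paper handles the ``interior $\Rightarrow$ extreme'' direction by writing $\mu=t_{1}\mu_{1}+t_{2}\mu_{2}$ directly and invoking condition~(1) of Proposition~\ref{P:conv0}, whereas you use Lemma~\ref{L:convex} for both directions and land on condition~(3); your version is arguably a bit more uniform.
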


    \begin{proof}  By Theorem \ref{T:mu-form} tailored to the
	scalar-valued case, we know that any $\mu \in
	\partial_{e}{\mathcal C}$ has the form \eqref{mu-form-scalar}
	with base points $x_{1}, \dots, x_{n}$ and weights $w_{1},
	\dots, w_{n}$ subject to \eqref{scalar-constraints}; the
	question is: which such $\mu$'s are actually extreme points?
	
	Note that conditions \eqref{scalar-constraints} can be
	interpreted as exhibiting $0 \in {\mathcal R}^{m}$ as lying in
	the convex hull of $\{ \bphi(x_{1}), \dots, \bphi(x_{n}) \}$.  It remains
	to show that the measure $\mu = \sum_{j=1}^{n} w_{j}
	\delta_{x_{j}}$ is an extreme point of ${\mathcal C}(X,1, \bphi)$ if and
	only if in fact $0$ is in the interior of the convex hull of
	$\{\bphi(x_{1}), \dots, \bphi(x_{n}) \}$.  
	
	Let us suppose the $0 = \sum_{j=1}^{n} w_{j} \bphi(x_{j})$ is
	{\em not} in the interior of the convex hull.  By statement
	(3) in Proposition \ref{P:conv0}, this is the same as the
	existence of real numbers $c_{1}, \dots, c_{n}$ not all zero
	with
	$$
	\sum_{j=1}^{n} c_{j} = 0, \quad \sum_{j=1}^{n} c_{j}
	\bphi(x_{j}) = 0.
	$$
	Define a measure $\nu =  \epsilon \sum_{j=1}^{n}
	c_{j}\delta_{x_{j}}$.  Then $\nu \ne 0$, $\nu(X) = 0$ and $\nu(\phi_{j}) =
	\int_{X} \phi_{j} {\tt d}\nu = \sum_{j=1}^{n} c_{j}
	\phi(x_{j}) = 0$. If we choose $\epsilon > 0$ sufficiently
	small, then $\mu  \pm \nu \in {\mathcal C}(1,X, \bphi)$.  We conclude by
	Lemma \ref{L:convex} that $\mu$ is not extremal in ${\mathcal
	C}(X,1,\bphi)$.
	
	Suppose next that $0 = \sum_{j=1}^{n} w_{j} \bphi(x_{j})$ is
	an interior point of the convex hull of $\{\bphi(x_{1}), \dots,
	\bphi(x_{n}) \}$ in ${\mathbb R}^{m}$. We wish to show that then $\mu
	\in \partial_{e}{\mathcal C}(X,1, \bphi)$.  We therefore suppose that
	$\mu = t_{1} \mu_{1} +  t_{2} \mu_{2}$ with $\mu_{k} \in
	{\mathcal C}(X,1,\bphi)$ and $t_{k}> 0$ for each $k=1,2$ and $t_{1} + t_{2} = 1$.  Since
	$\mu_{k}$ is a positive measure for each $k$, we read off
	from \eqref{mu-form-scalar} that $\operatorname{supp} \mu_{k}
	\subset \{x_{1}, \dots, x_{n}\}$, so each $\mu_{k}$ has the
	form $\mu_{k} = \sum_{j=1}^{n} w_{j}^{(k)} \delta_{x_{j}}$
	for some weights $w_{j}^{(k)} \ge 0$ with $\sum_{j=1}^{n}
	w_{j}^{(k)} = 1$.  From the fact that $\mu_{k} \in {\mathcal
	C}(X,1,\bphi)$ we also have that $\mu_{k}(\phi_{i}) = \sum_{j=1}^{n}
	w^{(k)} \phi_{i}(x_{j}) = 0$ for each $i = 1, \dots, m$, or, in vectorial form,
	$ \sum_{j=1}^{n} w^{(k)}_{j} \bphi(x_{j}) = 0 \in {\mathbb
	R}^{m}$.  By the assumption that $0 = \sum_{j=1}^{n} w_{j}
	\bphi(x_{j})$ is in interior point for the convex hull of
	$\{\bphi(x_{1}), \dots, \bphi(x_{n})\}$ in ${\mathbb R}^{m}$,
	statement (1) in Proposition \ref{P:conv0} gives us
        that $w_{j}^{(k)} = w_{j}$ for $j=1, \dots, n$ for
	each $k = 1,2$, i.e., $\mu_{k} = \mu$.  We conclude that
	$\mu$ is indeed an extreme point of ${\mathcal C}(X,1,\bphi)$ as wanted.
	\end{proof}

\begin{corollary}  \label{C:extreme-scalar}  Suppose that we are
    given a data set
 $$  X = \text{ a compact Hausdorff space}, \quad \bphi = \{\phi_{1}, \dots,
 \phi_{m}\} \subset C_{{\mathbb R}}(X)
 $$
 and we let $\cC(X,1,\bphi)$ be as in \eqref{cC} (with $N=1$).  Then
 ${\mathcal C}(X,1,\bphi) \ne \emptyset$ if and only if, for some natural number
 $n$ with $1 \le n \le m+1$, there exists a collection of
 $n$ distinct  points $x_{1}, \dots, x_{n}$ in $X$ such that $0 =
 \sum_{j=1}^{n} w_{j} \bphi(x_{j})$ is an interior point of
 the convex hull of the set $\{ \bphi(x_{1}), \dots, \bphi(x_{n})
 \}$ in ${\mathbb R}^{m}$,
 where $\bphi(x_{j}) = \left[ \begin{smallmatrix}
 \phi_{1}(x_{j}) \\ \vdots \\ \phi_{m}(x_{j}) \end{smallmatrix}
 \right]$.
 \end{corollary}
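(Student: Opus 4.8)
The plan is to obtain the corollary as a direct consequence of Theorem~\ref{T:extreme-scalar}, Theorem~\ref{T:mu-form} specialized to $N=1$, and the Kre\u{\i}n-Milman theorem, treating the two implications separately.

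For the sufficiency direction, suppose we are given $n$ distinct points $x_{1}, \dots, x_{n}$ in $X$ with $1 \le n \le m+1$ such that $0$ is an interior point of the convex hull of $\{\bphi(x_{1}), \dots, \bphi(x_{n})\}$ in ${\mathbb R}^{m}$. By the definition of $\operatorname{conv}^{0}$, there are (uniquely determined) weights $w_{j} > 0$ with $\sum_{j=1}^{n} w_{j} = 1$ and $\sum_{j=1}^{n} w_{j} \bphi(x_{j}) = 0$. Set $\mu = \sum_{j=1}^{n} w_{j} \delta_{x_{j}}$. Then $\mu$ is a positive scalar measure, $\mu(X) = \sum_{j=1}^{n} w_{j} = 1$, and for each $r$ the quantity $\mu(\phi_{r}) = \sum_{j=1}^{n} w_{j} \phi_{r}(x_{j})$ is the $r$-th component of $\sum_{j=1}^{n} w_{j} \bphi(x_{j}) = 0$. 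Hence $\mu \in {\mathcal C}(X,1,\bphi)$, and in particular ${\mathcal C}(X,1,\bphi) \ne \emptyset$. (Theorem~\ref{T:extreme-scalar} moreover tells us this $\mu$ is extreme, though that is not needed here.)

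For the necessity direction, suppose ${\mathcal C}(X,1,\bphi) \ne \emptyset$. As recorded following \eqref{cC}, ${\mathcal C}(X,1,\bphi)$ is a compact convex subset of the locally convex space $[M(X)^{1 \times 1}]_{h}$ with its weak-$*$ topology; being nonempty, it has an extreme point $\mu$ by the Kre\u{\i}n-Milman theorem. Applying Theorem~\ref{T:mu-form} with $N=1$ (so that the bound $(m+1)N^{2}$ becomes $m+1$), we may write $\mu = \sum_{j=1}^{n} w_{j} \delta_{x_{j}}$ for some $n$ with $1 \le n \le m+1$, distinct points $x_{1}, \dots, x_{n} \in X$, and weights $w_{j}$ satisfying \eqref{scalar-constraints}. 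Theorem~\ref{T:extreme-scalar} now applies: since $\mu$ is an extreme point, $0 = \sum_{j=1}^{n} w_{j} \bphi(x_{j})$ must be an interior point of the convex hull of $\{\bphi(x_{1}), \dots, \bphi(x_{n})\}$ in ${\mathbb R}^{m}$, which is exactly the collection of points asserted to exist.

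Since both directions merely assemble results already in hand, I do not expect any serious obstacle. The only points meriting a moment's care are verifying that the hypotheses of the Kre\u{\i}n-Milman theorem (nonemptiness together with compactness in a locally convex topology) are in force — both already noted after \eqref{cC} — and checking that the cardinality bound $n \le (m+1)N^{2}$ from Theorem~\ref{T:mu-form} specializes to the stated $n \le m+1$ when $N=1$.
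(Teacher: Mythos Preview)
Your proposal is correct and follows essentially the same approach as the paper: invoke Kre\u{\i}n--Milman to pass between nonemptiness and existence of an extreme point, then read off the characterization of extreme points from Theorem~\ref{T:extreme-scalar}. You are simply more explicit than the paper in unpacking the sufficiency direction and in separating out the appeal to Theorem~\ref{T:mu-form} for the form of the extreme point.
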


 \begin{proof} By the Kre\u{\i}n-Milman theorem, ${\mathcal 
     C}(X,1,\bphi)$ has
     extreme points if and only if ${\mathcal C}(X,1,\bphi)$ is not empty.  The
     conclusion is now immediate from Theorem \ref{T:extreme-scalar}.
 \end{proof}

 \begin{remark}   \label{R:m=0scalar}
    {\em  One can interpret Theorem \ref{T:extreme-scalar} even for the
     case $m=0$.  In this case $1 \le n \le m+1 = 1$ forces $n=1$.
     The constraints \eqref{scalar-constraints} force $\mu$ to have
     the form $\mu = \delta_{x}$ for some $x \in X$.  As there are no
     $\phi$'s, the condition that $0$ be an interior point of the 
     convex hull of $\{ \bphi(x_{1}), \dots, \bphi(x_{n})\}$ can be 
     interpreted to hold vacuously
     This recovers the correct result that the set of
     extreme points of the normalized matrix ball $\{\mu \in  M(X)_{+}
     \colon \mu(X) = 1\}$ consists of the unit point masses $\{
     \delta_{x} \colon x \in X\}$.}
   \end{remark}

   \begin{remark}  \label{R:Pickering}
       {\em  Another special case of Theorem
       \ref{T:extreme-scalar}  of interest is the case where
       $X = {\mathbb T}$ is the unit circle in the complex plane,
       $m=2$ with $\phi_{1}(z) = \R z$ and $\phi_{2}(z) = \I z$.
      In this case one can give explicit geometric characterization
      of $\partial_{e}{\mathcal C}$.  Indeed, pairs of points with
      $\bo
      \in {\mathbb R}^{2} \cong {\mathbb C}$ correspond to antipodal
      points on the unit circle, and triples of points $x_{1}, x_{2},
      x_{3}$ on the unit circle with $\bo \in
      \operatorname{conv}^{0}\{x_{1}, x_{2}, x_{3}\}$ amount to
      non-collinear points on the unit circle having $\bo \in {\mathbb
      C}$ in the interior of the simplex spanned by $x_{1}, x_{2},
      x_{3}$. This analysis has been worked out by Dritschel and Pickering in
      \cite{DP}.  Motivation for this example comes from the search
      for a collection of test functions for the Schur class
      associated with the  constrained $H^{\infty}$ class 
      $H^{\infty}_{1}  = \{ f \in
      H^{\infty} \colon f'(0) = 0\}$ (see \cite{DPRS} and Remark 
      \ref{R:testfunc} below).   In this context there is an
      additional equivalence relation imposed on
      $\partial_{e}{\mathcal C}({\mathbb T}, 1,\bphi)$ and the set of 
      equivalence classes of ${\mathcal 
      C}({\mathbb T}, 1,\bphi)$ can be identified
      topologically with the unit sphere.

    } \end{remark}

    \subsection{Return to the general matrix-valued case}
    \label{S:return}

    We now indicate how one can analyze the general case of Problem
    \ref{P:extreme} by using the language of noncommutative convexity
    (see \cite{LP, HMP, FM, Gregg}).

    Rather than delve into the general setting of $C^{*}$-convex
    combinations of elements of a $C^{*}$-algebra or of the
    generalized state space of a $C^{*}$-algebra and associated
    $C^{*}$-convex subsets and $C^{*}$-extreme points, we discuss
    only the concrete special case which we need for our application
    (but see Remark \ref{R:noncomcon} below).
    We fix a positive integer $N$ and consider a collection of $n$
    vectors in the space ${\mathcal X}: = ([{\mathbb C}^{N \times
    N})]_{h})^{m \times 1}$, i.e., column vectors of length $m$, each entry
    of which is an $N \times N$ complex Hermitian matrix.  Given a
    collection of $n$ elements $\Phi^{(1)}, \dots, \Phi^{(n)}$ in
    ${\mathcal X}$, we say that $\Phi \in {\mathcal X}$ is a {\em
    $C^{*}$-convex combination} of $\Phi^{(1)}, \dots, \Phi^{(n)}$ if
    there are $n$ matrices $A_{1}, \dots, A_{n}$ of size $N \times N$
    with $\sum_{j=1}^{n} A_{j}^{*} A_{j} = I$ so that
    \begin{equation}   \label{C*convcomb1}
    \Phi = \sum_{j=1}^{n} A_{j}^{*} \Phi^{(j)} A_{j}
    \end{equation}
    where we set
    \begin{equation}  \label{C*convcomb2}
    A_{j}^{*} \Phi^{(j)} A_{j} = \begin{bmatrix} A_{j}^{*}
    \Phi^{(j)}_{1} A_{j} \\ \vdots \\ A_{j}^{*} \Phi^{(j)}_{m}A_{j} \end{bmatrix}
    \text{ if } \Phi^{(j)} = \begin{bmatrix} \Phi^{(j)}_{1} \\ \vdots \\
    \Phi^{(j)}_{m} \end{bmatrix} \in ([{\mathbb C}^{N \times
    N}]_{h})^{m \times 1}.
    \end{equation}
    For our application, we only deal with the special case where
    $\Phi^{(j)}_{i}$ is a
    scalar multiple of the identity:  $\Phi^{(j)}_{i} =
    \phi^{(j)}_{i} I_{N}$ where $\phi^{(j)}_{i}$ is a real number; we
    denote the subspace of all such elements of ${\mathcal X}$ by
    ${\mathcal X}_{s}$.  For $\Phi^{(1)}, \dots, \Phi^{(n)} \in
    {\mathcal X}_{s}$,  the $C^{*}$-convex combination
    \eqref{C*convcomb1} and \eqref{C*convcomb2} simplifies to
    \begin{equation}  \label{C*conv1}
    \Phi = \sum_{j=1}^{n}  W_{j} \Phi^{(j)}
    \end{equation}
    where we set $W_{j} = A_{j}^{*} A_{j}$, so $\{W_{j} \colon j=1,
    \dots n\}$ is any collection of $N \times N$ matrices satisfying
    \begin{equation}   \label{C*conv2}
	W_{j} \ge 0 \text{ for } j = 1, \dots, n, \quad
	\sum_{j=1}^{n} W_{j} = I_{N}
   \end{equation}
   and the meaning of the $j$-th term in \eqref{C*conv1} is
   \begin{equation}  \label{C*conv3}
       W_{j} \Phi^{(j)} = \begin{bmatrix} \phi^{(j)}_{1} W_{j} \\
       \vdots \\ \phi^{(j)}_{m} W_{j} \end{bmatrix} \text{ if }
       \Phi^{(j)} = \begin{bmatrix}  \phi^{(j)}_{1} I_{N} \\ \vdots
       \\ \phi^{(j)}_{m} I_{N} \end{bmatrix}.
   \end{equation}
    We will furthermore only be interested in the case where the
    $C^{*}$-convex combination of such $\Phi^{(1)}, \dots,
    \Phi^{(n)}$ in ${\mathcal X}_{s}$ is the zero element $\bo$ in ${\mathcal X}$:
    $$  \bo = \begin{bmatrix} 0 \\ \vdots \\ 0 \end{bmatrix} \in
    ([{\mathbb C}^{N \times N}]_{h})^{m \times 1}.
    $$

   In analogy with the notion of the interior point of the convex hull of
    a collection of vectors $\bu_{1}, \dots, \bu_{n}$ for the
    classical case presented in Section \ref{S:prelim}, we propose
    the following definition of the notion that $\bo$ is an interior
    point of the  $C^{*}$-convex hull of a collection of vectors in
    ${\mathcal X}_{s}$. The statement of the result requires some
    additional terminology, all of which we collect in the following
    definition.

    \begin{definition}   \label{D:inthull}
{\em 	Given
    an operator $T$ on a Hilbert space ${\mathcal H}$ (e.g.,
    ${\mathcal H} = {\mathbb C}^{N}$ and $T$ presented as a matrix in
    ${\mathbb C}^{N \times N}$) together with a closed
    subspace ${\mathcal M}$ of ${\mathcal H}$, we  say that
     $T$ \textbf{lives on} ${\mathcal M}$ if $T=TP_{{\mathcal M}} =
    P_{{\mathcal M}} T$ (where $P_{{\mathcal M}}$ is the orthogonal
    projection from ${\mathcal H}$ to ${\mathcal M}$).

    Given a family of closed subspaces ${\mathcal M}_{1}, \dots, {\mathcal
    M}_{n}\}$ of ${\mathcal H}$, we say that the family
    $\{{\mathcal M}_{1}, \dots, {\mathcal M}_{n}\}$ is \textbf{weakly
    independent} if, {\em whenever $T_{1}, \dots, T_{n}$ are linear
    operators on ${\mathcal H}$ with
    \begin{equation}  \label{weakindhy}
    T_{j} \text{ lives on } {\mathcal M}_{j} \text{ for each } j
    \text{ and } \sum_{j=1}^{n} T_{j} = 0,
    \end{equation}
    it follows that $T_{j} = 0$ for each $j=1, \dots, n$}.

    Suppose that in addition we are given a collection $\bphi = \{
    \phi^{(1)}, \dots, \phi^{(n)}\}$ of $n$ vectors in ${\mathbb
    R}^{m}$ (so $\phi^{(j)} = \left[ \begin{smallmatrix}
    \phi^{(j)}_{1} \\ \vdots \\ \phi^{(j)}_{m} \end{smallmatrix}
    \right]$ with real numbers $\phi^{(j)}_{1}, \dots,
    \phi^{(j)}_{m}$ say).
     Then we say that the family of closed
    subspaces $\{{\mathcal M}_{1}, \dots, {\mathcal M}_{n}\}$ is
     $\bphi$-\textbf{constrained weakly independent} if, {\em whenever
    $T_{1}, \dots, T_{n}$ are linear operators on ${\mathcal H}$ with
    \begin{equation}   \label{phiweakindhy}
	T_{j} \text{ lives on } {\mathcal M}_{j} \text{ for each } j,
    \, \,
     \sum_{j=1}^{n} T_{j} = 0, \, \,  \text{ and } \sum_{j=1}^{n}
     \phi^{(j)}_{i} T_{j} = 0 \text{ for each } i=1, \dots, m,
     \end{equation}
     it follows that $T_{j} = 0$ for each $j=1, \dots, n$}.

     Finally, suppose that we are given $n$-vectors $\bphi = \left\{ \left[
     \begin{smallmatrix} \phi^{(j)}_{1} \\ \vdots \\ \phi^{(j)}_{m}
	 \end{smallmatrix} \right] \colon j = 1, \dots, n\right\}$ in
	 ${\mathbb R}^{m}$ with associated set of $n$ vectors
	 ${\boldsymbol \Phi} = \left\{  \left[
     \begin{smallmatrix} \phi^{(j)}_{1} I_{N} \\ \vdots \\
	 \phi^{(j)}_{m} I_{N}
	 \end{smallmatrix} \right] \colon j = 1, \dots, n\right\}$
	 in  ${\mathcal X}_{s}$, and suppose that
     $0$ is in the $C^{*}$-convex
     hull of the $\boldsymbol \Phi$: there are matrices $W_{1}, \dots,
     W_{n}$ satisfying conditions \eqref{C*conv2} so that
     $$
       0 = \sum_{j=1}^{n} W_{j} \Phi^{(j)}
     $$
       with $W_{j} \Phi^{(j)}$ defined as in \eqref{C*conv3}.  Then
       we say that $0$ \textbf{is an interior point of the $C^{*}$-convex
       hull} of $\{ \Phi^{(j)} \colon j=1, \dots, n\}$ if {\em the family
       of subspaces $\{\operatorname{Ran} W_{1}$, $\dots,$
      $ \operatorname{Ran} W_{n}\}$ is $\bphi$-constrained weakly
       independent.}
     	}\end{definition}
	
	An easy observation is that for the case $N=1$, the notion of 
	$0$ being an interior point of the $C^{*}$-convex hull of 
	${\boldsymbol \Phi} = \{ \phi^{(1)}, \dots, \phi^{(n)}\} 
	\subset {\mathbb R}^{m}$ 
	coincides with $0$ being an interior point of the convex hull 
	of $\{ \phi^{(1)}, \dots, \phi^{(n)} \}$ as characterized in 
       Proposition \ref{P:conv0}.  Indeed, 
	supposes that $0 = \sum_{k=1}^{n} w_{k} \phi^{(k)}$ for 
	positive numbers $w_{1}, \dots, w_{n}$ summing to $1$, and 
	$t_{1}, \dots, t_{n}$ is a collection of real numbers with
	$$
	\sum_{k=1}^{n} t_{k} = 0, \quad \sum_{k=1}^{n} t_{k} 
	\phi^{(k)} = 0 \in {\mathbb R}^{m}.
	$$
	Since $\operatorname{Ran} w_{k}$ is the whole space ${\mathbb 
	C}$ (when $w_{k}$ is  considered as an operator on ${\mathbb 
	C}$), it is automatically the case that $t_{k}$  ``lives in''
	$\operatorname{Ran} w_{k}$.  Thus the condition for $0$ being 
	an interior point of the $C^{*}$-convex hull of 
	$\{\phi^{(1)}, \dots, \phi^{(n)}\}$ reduces to condition (3) 
	in Proposition \ref{P:conv0} (with $\phi^{(j)}$ in place of 
	$\bu_{j}$), i.e., to $0$ being an interior point of the 
	classical convex hull of $\{\phi^{(1)}, \dots, \phi^{(n)}\}$.
	
	\smallskip 
	
	We are now ready to state the following general result concerning
	Problem \ref{P:extreme}.
	
	\begin{theorem} \label{T:extreme-matrix} Let the convex set 
	    of measures 
	    ${\mathcal C} = {\mathcal C}(X, N, \bphi \}$ be given as in \eqref{cC}. 
	    Then a measure
	    $\mu$ in ${\mathcal C}$ is extremal ($\mu \in
	    \partial_{e}{\mathcal C}$) if and only if there is a
	    natural number $n$ with $1 \le n \le (m+1)N^{2}$ and $n$
	    distinct points $\bx = (x_{1}, \dots, x_{n})$ in $X$ together
	    with $N \times N$ matrix weights $W_{1}, \dots, W_{n}$
	    satisfying the conditions \eqref{W-sys} so that
	   $\mu$ has a
	    representation as in Theorem \ref{T:mu-form} (see
	    \eqref{mu-form})
	    \begin{equation}  \label{mu-form''}
	    \mu = \sum_{j=1}^{n} W_{j} \delta_{x_{j}}
	    \end{equation}
	    where, in addition, the family of subspaces $\{
	    \operatorname{Ran} W_{1}, \dots, \operatorname{Ran} W_{n}
	    \}$ is $\bphi(\bx)$-con\-strained weakly independent, where we 
	    set
	    $$
	    \bphi(\bx) = \{ \phi(x_{1}), \dots, \phi(x_{n}) \}.
	    $$
	
	    \end{theorem}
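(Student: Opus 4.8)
The plan is to reduce immediately to the form supplied by Theorem \ref{T:mu-form} and then mimic the scalar argument of Theorem \ref{T:extreme-scalar}, carrying the matrix weights along. Since Theorem \ref{T:mu-form} already tells us that every $\mu \in \partial_{e}\mathcal{C}$ has the form $\mu = \sum_{j=1}^{n} W_{j}\delta_{x_{j}}$ with $1 \le n \le (m+1)N^{2}$, distinct $x_{j}$, $W_{j} \ge 0$, and \eqref{W-sys} in force, the content of the theorem is the added equivalence: among measures of this shape, extremality is exactly $\bphi(\bx)$-constrained weak independence of $\{\operatorname{Ran} W_{1}, \dots, \operatorname{Ran} W_{n}\}$. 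So it suffices to prove, for a fixed such $\mu$, the two implications below.

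First I would show that if $\{\operatorname{Ran} W_{j}\}$ is \emph{not} $\bphi(\bx)$-constrained weakly independent, then $\mu$ is not extreme; applying this contrapositively to the representation from Theorem \ref{T:mu-form} yields the ``only if'' half. By hypothesis there are operators $T_{1}, \dots, T_{n}$, not all zero, with $T_{j}$ living on $\operatorname{Ran} W_{j}$, $\sum_{j} T_{j} = 0$, and $\sum_{j}\phi_{i}(x_{j})T_{j} = 0$ for all $i$. Replacing the family $\{T_{j}\}$ by $\{T_{j}+T_{j}^{*}\}$ or by $\{\,i(T_{j}-T_{j}^{*})\,\}$ — one of which is still nonzero, still lives on the $\operatorname{Ran} W_{j}$ (taking adjoints preserves ``lives on $\mathcal M$''), and still satisfies both linear relations (the $\phi_{i}$ being real) — I may assume each $T_{j}$ is Hermitian. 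Then $\nu := \sum_{j} T_{j}\delta_{x_{j}}$ is a nonzero Hermitian matrix measure with $\nu(X)=0$ and $\nu(\phi_{i})=0$, so $\mu \pm \epsilon\nu$ still has total mass $I$ and vanishing $\bphi$-moments; and positivity $\mu \pm \epsilon\nu \ge 0$ amounts to $W_{j} \pm \epsilon T_{j} \ge 0$ for each $j$, which holds once $\epsilon < \min_{j}\{\delta_{j}/\|T_{j}\|\}$, where $\delta_{j}>0$ is a lower bound for $W_{j}$ on $\operatorname{Ran} W_{j}$ and I use that $T_{j}$ lives on $\operatorname{Ran} W_{j}$ so the perturbation never leaves that subspace. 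Lemma \ref{L:convex} then gives $\mu \notin \partial_{e}\mathcal{C}$.

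For the ``if'' half, assume $\{\operatorname{Ran} W_{j}\}$ is $\bphi(\bx)$-constrained weakly independent and write $\mu = t_{1}\mu_{1}+t_{2}\mu_{2}$ with $\mu_{k}\in\mathcal{C}$, $t_{k}>0$, $t_{1}+t_{2}=1$. From $0 \le t_{k}\mu_{k}(\Delta) \le \mu(\Delta)$ in the Loewner order for every Borel $\Delta$, and the fact that $\mu$ is supported on $\{x_{1},\dots,x_{n}\}$, each $\mu_{k}$ is supported there too, say $\mu_{k} = \sum_{j} W_{j}^{(k)}\delta_{x_{j}}$ with $W_{j}^{(k)} \ge 0$. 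Matching atoms gives $W_{j} = t_{1}W_{j}^{(1)} + t_{2}W_{j}^{(2)}$, so $t_{k}W_{j}^{(k)} \le W_{j}$ forces $\ker W_{j} \subseteq \ker W_{j}^{(k)}$, hence $\operatorname{Ran} W_{j}^{(k)} \subseteq \operatorname{Ran} W_{j}$ and $W_{j}^{(k)}$ lives on $\operatorname{Ran} W_{j}$. Setting $T_{j} := W_{j}^{(1)} - W_{j}$, we get a Hermitian family living on the $\operatorname{Ran} W_{j}$ with $\sum_{j} T_{j} = I - I = 0$ and $\sum_{j}\phi_{i}(x_{j})T_{j} = \mu_{1}(\phi_{i}) - \mu(\phi_{i}) = 0$ (using \eqref{W-sys} for both $\mu_{1}$ and $\mu$). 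Weak independence yields $T_{j}=0$, i.e.\ $\mu_{1}=\mu$, so $\mu$ is extreme.

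I expect the difficulty to be bookkeeping rather than a genuine obstacle. The two places needing care are: in the first implication, the passage from the possibly non-Hermitian witnesses permitted in the definition of weak independence to the Hermitian perturbing measure actually needed to stay inside $[M(X)^{N\times N}]_{+}$, together with the verification that the ``lives on'' condition is what makes small perturbations preserve positivity of each $W_{j}$; and in the second, the elementary but essential observation that the Loewner domination $t_{k}\mu_{k} \le \mu$ of positive matrix measures forces the range inclusions $\operatorname{Ran} W_{j}^{(k)} \subseteq \operatorname{Ran} W_{j}$, which is precisely what allows the difference weights to ``live on'' the prescribed subspaces so that the $\bphi(\bx)$-constrained weak independence hypothesis applies.
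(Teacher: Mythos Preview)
Your proposal is correct and follows essentially the same route as the paper: both directions hinge on reducing to Hermitian perturbations living on $\operatorname{Ran} W_{j}$, verifying the linear constraints, and invoking $\bphi(\bx)$-constrained weak independence. The only cosmetic difference is that for the ``if'' direction the paper applies the $\mu\pm\nu$ criterion of Lemma~\ref{L:convex} directly (so that $W_{j}\pm T_{j}\ge 0$ is immediate), whereas you work from the convex-combination definition and extract the range inclusion from the Loewner domination $t_{k}W_{j}^{(k)}\le W_{j}$; the substance is the same.
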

	
	    \begin{proof}
		Suppose first that $\mu$ has the form
		\eqref{mu-form''} with $\{\operatorname{Ran}
		W_{1}, \dots, \operatorname{Ran} W_{n}\}$  a
		$\bphi$-constrained weakly independent family of
		subspaces, and also suppose that $\nu$ is a complex
		Hermitian $N \times N$-matrix measure on $X$ such that
		\begin{equation}  \label{nucond}
\nu(X) = 0, \quad \nu(\phi_{i}) = \int_{X} \phi_{i} {\tt d}\nu = 0
\text{ for } i=1, \dots, m, \quad \mu \pm \nu \ge 0.
\end{equation}
From the last of conditions \eqref{nucond} we see that
$\operatorname{supp} \nu \subset \{ x_{1}, \dots, x_{m} \}$ and hence
there are complex Hermitian matrices $T_{1}, \dots, T_{n}$ so that
$$
   \nu = \sum_{j=1}^{n} T_{j} \delta_{x_{j}}.
 $$
 By evaluating $\mu \pm \nu$ on the singleton Borel set  $\{x_{j}\}$,
 we see that $W_{j} \pm T_{j} \ge 0$.  This enables us to conclude
 that  $T_{j}$ lives on $\operatorname{Ran} W_{j}$ for each $j$.
 From the first two conditions in \eqref{nucond} we deduce that
 $$
 \sum_{j=1}^{n} T_{j} = 0, \quad \sum_{j=1}^{n} \phi_{i}(x_{j}) T_{j}
 = 0 \text{ for } i=1, \dots, m.
 $$
 From the hypothesis that $\{ \operatorname{Ran} W_{j} \colon j=1,
 \dots, n\}$ is $\bphi(\bx)$-constrained weakly independent, we conclude
 that $T_{j} = 0$ for each $j$, and hence $\nu = 0$.  From the
 criterion in Lemma \ref{L:convex}, we now conclude that $\mu$ is
 extremal as wanted.

 Conversely, suppose that $\mu \in \partial_{e} {\mathcal C}$ and
 suppose that $\{ T_{1}, \dots, T_{n}\}$ is a collection of operators
 satisfying the conditions \eqref{phiweakindhy} (with
 $\operatorname{Ran} W_{j}$ in place of ${\mathcal M}_{j}$).  Note
 that $\{ \R T_{1}, \dots, \R T_{n}\}$ and $\{ \I T_{1}, \dots,
 \I T_{n}\}$ satisfying the same hypotheses and in order to show
 that $T_{j} = 0$ it suffices to show that $\R T_{j} = 0$ and $\I
 T_{j} = 0$.  Thus without loss of generality we may assume that
 $T_{j}$ is complex Hermitian.  Define a measure $\nu$ by $\nu =
 \epsilon \sum_{j=1}^{n} T_{j} \delta_{x_{j}}$ where $\epsilon > 0$.
 One can check that then $\nu$ meets all the conditions
 \eqref{nucond} as long as $\epsilon > 0$ is chosen sufficiently
 small.  If $\mu$ is extremal, then Lemma \ref{L:convex} forces  $\nu
 = 0$.  As we were careful to arrange that $\epsilon \ne 0$, it
 follows that $T_{j} = 0$ for each $j=1, \dots, n$.  It now follows
 that indeed $\{ \operatorname{Ran} W_{j} \colon j=1, \dots, n\}$ is
 $\bphi(\bx)$-constrained weakly independent as was to be shown.
\end{proof}

It is of interest to specialize Theorem \ref{T:extreme-matrix} to the
case $m=0$; in this way we recover a result of Arveson (see
\cite[Theorem 1.4.10]{Arv}).
	
\begin{corollary}  \label{C:Arveson}
    Let ${\mathcal C} = \cC(X, N, \emptyset)$ be the cone of positive $N
    \times N$-matrix measures $\mu$ on a compact Hausdorff space $X$
    normalized to have $\mu(X) = I_{N}$.  Then $\mu$ is extremal in
    ${\mathcal C}$ if and only if, for some natural number $n$ with
    $1 \le n \le N^{2}$, there are $n$ distinct points $x_{1}, \dots,
    x_{n}$ and $N \times N$ matrix weight $W_{1}, \dots, W_{n}$
    satisfying
    \begin{enumerate}
	\item[(i)] $W_{j} \ge 0$ for each $j = 1, \dots, n$ and
	$\sum_{j=1}^{n} W_{j} = I_{N}$, and
	\item[(ii)] the family of subspaces $\{\operatorname{Ran}
	W_{1}, \dots, \operatorname{Ran} W_{n} \}$ is weakly
	independent
  \end{enumerate}
  so that $\mu$ is given by
  \begin{equation}   \label{mu-form-Arv}
    \mu = \sum_{k=1}^{n} W_{k} \delta_{x_{k}}.
  \end{equation}
 \end{corollary}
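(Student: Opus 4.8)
The plan is to obtain Corollary~\ref{C:Arveson} as the $m=0$ specialization of Theorem~\ref{T:extreme-matrix}, checking only that in the absence of constraint functions the notion of $\bphi(\bx)$-constrained weak independence collapses to ordinary weak independence. First I would note that $\cC(X,N,\emptyset)\neq\emptyset$ (for instance $\mu = \delta_{x_{0}} I_{N}$ lies in it for any $x_{0}\in X$), so that the Kre\u{\i}n-Milman theorem guarantees extreme points and Theorem~\ref{T:extreme-matrix} is applicable with $\bphi = \emptyset$.

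Next I would substitute $m=0$ into the conclusion of Theorem~\ref{T:extreme-matrix}. The bound $1\le n\le(m+1)N^{2}$ becomes $1\le n\le N^{2}$. In the linear system \eqref{W-sys} the second family of equations $\sum_{r}\phi_{i}(x_{r})W_{r}=0$ is indexed over the empty set $i=1,\dots,0$ and hence disappears, leaving only $\sum_{j=1}^{n}W_{j}=I_{N}$; together with the positivity $W_{j}\ge 0$ inherent in the representation \eqref{mu-form}, this is exactly condition~(i). The representation $\mu=\sum_{j=1}^{n}W_{j}\delta_{x_{j}}$ is \eqref{mu-form-Arv}.

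The one remaining point is condition~(ii). By Definition~\ref{D:inthull}, the family $\{\operatorname{Ran}W_{1},\dots,\operatorname{Ran}W_{n}\}$ is $\bphi(\bx)$-constrained weakly independent precisely when any operators $T_{j}$ living on $\operatorname{Ran}W_{j}$ with $\sum_{j}T_{j}=0$ and $\sum_{j}\phi_{i}(x_{j})T_{j}=0$ for $i=1,\dots,m$ must all vanish; with $m=0$ the last family of equations is vacuous, so hypothesis \eqref{phiweakindhy} reduces verbatim to \eqref{weakindhy}, i.e., to plain weak independence of $\{\operatorname{Ran}W_{j}\}$. Hence the ``if and only if'' of Theorem~\ref{T:extreme-matrix} reads exactly as the stated characterization, which is Arveson's Theorem~1.4.10 in \cite{Arv}. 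I do not expect any genuine obstacle here: the entire content of the corollary is already contained in Theorem~\ref{T:extreme-matrix}, and the only thing requiring care is the bookkeeping that an empty constraint set truly erases the word ``constrained'' in the weak-independence hypothesis (and, correspondingly, drops the factor $m+1$ from the cardinality bound).
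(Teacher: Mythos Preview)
Your proposal is correct and follows exactly the approach taken in the paper: the corollary is obtained by specializing Theorem~\ref{T:extreme-matrix} to $m=0$, with the observation that $\bphi(\bx)$-constrained weak independence reduces to ordinary weak independence when the constraint set is empty. Your added bookkeeping (nonemptiness of $\cC(X,N,\emptyset)$, the collapse of the bound $(m+1)N^{2}$ to $N^{2}$, and the vacuity of the constraint equations) is more explicit than the paper's own one-line proof, but the content is identical.
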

 
 \begin{proof}  Simply observe that this is just the $m=0$ case
     of Theorem \ref{T:extreme-matrix}.  We note that our proof
     (i.e., the proof of Theorem \ref{T:extreme-matrix} specialized
     to the $m=0$ case) is elementary and direct while the proof in
     \cite{Arv} has a more sophisticated flavor bringing in ideas
     from $C^{*}$-representation and dilation theory.
 \end{proof}
 
  \begin{remark}  \label{R:Arveson}{\em In the paper of Arveson 
     \cite{Arv}, it is not noted explicitly that the number of terms 
     $n$ in the decomposition \eqref{mu-form-Arv} for an extremal 
     measure of ${\mathcal C}(X, N, \emptyset)$ can be at most 
     $N^{2}$ for the finite-dimensional case (${\mathcal H} = 
     {\mathbb C}^{N}$).  However it is observed there (see \cite[page 
     165]{Arv}) that, in the 
     finite-dimensional case,  weak 
     independence of a family of subspaces $\{ {\mathcal M}_{1}, 
     \dots, {\mathcal M}_{n}\} \subset {\mathbb C}^{N}$ is equivalent 
     to classical linear independence for the family of subspaces 
     $\{ {\mathcal N}_{1}, \dots, {\mathcal 
     N}_{n} \} \subset {\mathcal C}^{N} \otimes {\mathcal 
     C}^{N}$, where we have set ${\mathcal N}_{j} = 
     \operatorname{span} \{ \xi \otimes \eta \colon \xi, \eta \in 
     {\mathcal M}_{j}\}$.  Since $\operatorname{dim} ({\mathbb C}^{N} 
     \otimes {\mathbb C}^{N})$ is $N^{2}$, we have the bound $N^{2}$ 
     on the number of subspaces in a weakly independent family of 
     subspaces $\{ {\mathcal M}_{1}, \dots, {\mathcal M}_{n}\}$ 
     contained in ${\mathbb C}^{N}$.  
     
     There is also an example given in \cite{Arv} of a weakly 
     independent family of subspaces which is not linearly 
     independent in the classical sense, e.g., ${\mathcal M}_{1} = 
     \operatorname{span}\{\xi \}$,
     ${\mathcal M}_{2} = 
     \operatorname{span}\{\eta \}$, ${\mathcal M}_{3}, = 
     \operatorname{span}\{\xi + \eta \}$ where $\xi$ and $\eta$ are 
     linearly independent vectors.  This example can be enhanced as 
     follows (see \cite[pages 32--35]{Mo}).  One can choose three 
     vectors $\xi_{1}, \xi_{2}, \xi_{3}$ in ${\mathbb C}^{2}$ so that 
     the family of subspaces $\cM_{j} = \operatorname{span} 
     \{\xi_{j}\}$ ($j = 1, 2, 3)$ is weakly independent and in 
     addition the associated matrix weights $W_{j} = \xi_{j} 
     \xi_{j}^{*}$ ($j = 1, 2, 3$) satisfy the normalization $W_{1} + 
     W_{2} + W_{3} = I_{2}$.  We conclude that the measure $\mu = W_{1} 
     \delta_{x_{1}} + W_{2} \delta_{x_{2}} + W_{3 }\delta_{x_{3}}$ 
     (where $x_{1}, x_{2}, x_{3}$ are any three distinct points in 
     $X$) is extremal in $\cC(X, 2, \emptyset)$ while not being  a
     spectral measure, i.e., $\mu$ is not of the form $\mu =  P_{1} \delta_{x_{1}} 
     + P_{2} \delta_{x_{2}}$ with $P_{1}, P_{2}$ orthogonal 
     projections with pairwise orthogonal ranges in ${\mathbb C}^{2}$
     (compare with Theorem \ref{T:mu-form2} below).
     } \end{remark}

	    \begin{remark}  \label{R:noncomcon} {\em The notion of
		$C^{*}$-convex combination and associated notions of
		$C^{*}$-convex set and $C^{*}$-extremal point are
		defined more broadly in the literature than what we
		have indicated so far here.  The setting of \cite{FM,
		Gregg} is the generalized state space $S_{{\mathcal
		H}}(A)$ of unit-preserving completely positive maps
		from the $C^{*}$-algebra $A$ into the $C^{*}$-algebra
		${\mathcal L}({\mathcal H})$ of bounded linear
		operators on the Hilbert space ${\mathcal H}$.  A
		$C^{*}$-convex combination of $n$ such maps
		$\phi_{1}, \dots, \phi_{n}$ is defined to be a $\phi$
		given by
		$$
		\phi(a) = \sum_{j=1}^{n} t_{j}^{*} \phi_{j}(a) t_{j}
		$$
		where $t_{j} \in {\mathcal L}({\mathcal H})$ satisfy
		$\sum_{j=1}^{n} t_{j}^{*}t_{j} = I_{{\mathcal H}}$.
		The main interest in \cite{FM, Gregg} (as well as in
		other papers) is the structure of $C^{*}$-extreme
		points of $S_{{\mathcal H}}(A)$.  More broadly, one
		could consider real maps $\phi \colon A \to {\mathcal
		L}({\mathcal H})$, i.e., maps which preserve
		selfadjoint elements, and in particular, examine when
		the zero map $0$ is a $C^{*}$-convex combination of
		$n$ given such maps $\phi_{1}, \dots, \phi_{n}$.
		This becomes exactly the setting introduced in
		Section \ref{S:return} if we take $A$ to be the
		$C^{*}$-algebra of continuous functions on the
		finite-point set $\{1, \dots, m\}$, i.e., $A = C(\{1,
		\dots, m\}) \cong {\mathbb C}^{m}$ (so selfadjoint
		elements are identified with ${\mathbb R}^{m}$),
		${\mathcal H} = {\mathbb C}^{N}$, and identify
		${\mathcal X} = ([{\mathbb C}^{N \times N}]_{h})^{m
		\times 1}$ with maps from $C(\{1, \dots, m\})$ into
		${\mathcal L}({\mathbb C}^{N}) \cong {\mathbb C}^{N
		\times N}$:
$$
\Phi = \begin{bmatrix} \Phi_{1} \\ \vdots \\ \Phi_{m} \end{bmatrix}
\in {\mathcal X} \mapsto \phi_{\Phi} \colon f \in C(\{1, \dots, m\})
\mapsto f(1) \Phi_{1} + \cdots + f(m) \Phi_{m}.
$$
We have not seen the notion of {\em interior point of the
$C^{*}$-convex hull} elsewhere in the literature.  Note that we
define this notion here only for the special case where $\Phi_{1},
\dots, \Phi_{n}$ are in ${\mathcal X}_{s}$; we do not hazard a guess
here as to what the appropriate notion should be for the more
noncommutative situation where $\Phi_{1}, \dots, \Phi_{n} \in
{\mathcal X}$, or for the still more general situation where
$\Phi_{1}, \dots, \Phi_{n}$ are real elements of ${\mathcal L}(A,
{\mathcal L}({\mathcal H}))$.

As observed in \cite{Gregg}, given a compact Hausdorff space $X$, the 
generalized state space $S_{{\mathcal H}}(C(X))$ of the commutative 
$C^{*}$-algebra $C(X)$ can be identified with positive ${\mathcal 
L}({\mathcal H})$-valued measures $\mu$ on $X$ having total mass 
$\mu(X)$ equal to $I_{{\mathcal H}}$.  Thus, when ${\mathcal H} = 
{\mathbb C}^{N}$,  $S_{{\mathbb C}^{N}}(C(X))$ is exactly the convex 
set ${\mathcal C}(X, N, \emptyset)$ whose classical extreme points 
are described in Corollary \ref{C:Arveson}.  One of the central goals in \cite{FM, 
Gregg} is to describe the $C^{*}$-extreme points of $S_{{\mathcal 
H}}(C(X))$.  It is interesting that the problem of describing the 
classical extreme points  of the linearly-constrained generalized 
state space ${\mathcal C}(X, N, \bphi)$, a 
problem formulated completely in the confines of classical convexity 
theory, has a solution (see Theorem \ref{T:extreme-matrix})  which 
draws on ideas from noncommutative convexity theory.

We note that other papers (e.g.~\cite{LP, HMP, FM-PAMS}) study
$C^{*}$-convex sets (and associated extremal-point theory) in
${\mathcal L}({\mathcal H})$ or, more generally, in a general
$C^{*}$-algebra $A$.  From our point of view this amounts to the
special case $m=1$.
}\end{remark}

It can be argued that the characterization of $\partial_{e}{\mathcal
C}(N,X,\bphi)$ in Theorem \ref{T:extreme-matrix} is not particularly explicit
and is a little difficult to work with.  To compensate for this we
now give a couple of illustrative more concrete classes of examples.

    \begin{theorem}  \label{T:mu-form1}  Suppose that ${\mathcal
	C} = {\mathcal C}(X, N, \bphi)$ is as in
	\eqref{cC}.  Suppose that $\mu \in [M(X)^{N \times N}]_{h}$
	has the form
	\begin{equation}   \label{mu-form'}
	\mu = \sum_{k=1}^{n} \mu_{k} L_{k}
	\end{equation}
	where
	\begin{enumerate}
	    \item[(i)]
	each $\mu_{k}$ is a scalar positive measure which is
	an extreme point for the associated convex compact subset ${\mathcal C}^{1}: =
    {\mathcal C}(X, 1, \bphi)$ of positive scalar measures where in 
    addition the support sets ${\mathcal S}_{k}: = \{
    \operatorname{supp} \mu_{k}
    \colon k = 1, \dots, n\}$ are disjoint (${\mathcal S}_{k} \cap
    {\mathcal S}_{k'} = \emptyset$ for $k \ne k'$),
    \item [(ii)]
     the matrix weights $L_{k}$ satisfy the conditions
     $$
  L_{k} \ge 0 \text{ for each } k, \quad  \sum_{k=1}^{n} L_{k} = I_{N}.
  $$
  and
  \item[(iii)] the family of subspaces
  $\{\operatorname{Ran} L_{k} \colon k=1, \dots, n\}$ is weakly
  independent (as defined in Definition \ref{D:inthull}).
  \end{enumerate}
  Then  $\mu \in \partial_{e}{\mathcal C}$.
 \end{theorem}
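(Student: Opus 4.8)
The strategy is to verify the criterion of Lemma~\ref{L:convex}: I take a complex Hermitian matrix measure $\nu$ on $X$ satisfying $\nu(X) = 0$, $\nu(\phi_r) = 0$ for $r = 1, \dots, m$, and $\mu \pm \nu \ge 0$, and show that $\nu = 0$. The first step is to locate the support of $\nu$. Since $\mu = \sum_{k=1}^n \mu_k L_k$ and $\mu \pm \nu \ge 0$, the measure $\nu$ must be absolutely continuous with respect to $\mu$ in the sense that its support is contained in $\bigcup_{k=1}^n \operatorname{supp}\mu_k = \bigcup_k {\mathcal S}_k$; moreover, restricting attention to the Borel set ${\mathcal S}_k$ (these being disjoint) we may write $\nu = \sum_{k=1}^n \nu_k$ where $\nu_k$ is supported on ${\mathcal S}_k$.

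The second, and key, step is to analyze each piece $\nu_k$. On ${\mathcal S}_k$ the measure $\mu$ agrees with $\mu_k L_k$, a scalar measure times a fixed positive matrix weight $L_k$. From $\mu \pm \nu \ge 0$ restricted to ${\mathcal S}_k$, I want to conclude that $\nu_k$ has the form $\nu_k = \mu_k T_k$ for some Hermitian matrix $T_k$ that \emph{lives on} $\operatorname{Ran} L_k$ in the sense of Definition~\ref{D:inthull}. The inclusion $\operatorname{Ran} T_k \subseteq \operatorname{Ran} L_k$ (and symmetrically for the left action) should come by evaluating $\mu_k L_k \pm \nu_k \ge 0$ on small Borel subsets of ${\mathcal S}_k$ and using positivity; here I will need to know that $\nu_k$ cannot have any component ``transverse'' to $\mu_k$ — i.e.\ that $\nu_k \ll \mu_k$ as a matrix measure — which again follows from $\mu_k L_k \pm \nu_k \ge 0$ combined with the fact that $\mu_k$ is a \emph{scalar} measure. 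Then the scalar conditions $\nu(X) = 0$ and $\nu(\phi_r) = 0$ decompose, via the constraint equations, into information about the $T_k$: using that each $\mu_k \in \partial_e{\mathcal C}(X,1,\bphi)$ — so by Theorem~\ref{T:extreme-scalar} the scalars $\mu_k(\{x\})$ on the points of ${\mathcal S}_k$ are uniquely pinned down by the normalization and the $\phi$-constraints — I expect to extract simply $\sum_{k=1}^n T_k = 0$ (and the $\phi_r$-constraints on $\nu$ become automatically satisfied, or yield no new information, precisely because each $\mu_k$ already satisfies them). The weak independence of $\{\operatorname{Ran} L_1, \dots, \operatorname{Ran} L_n\}$ then forces $T_k = 0$ for all $k$, hence $\nu_k = \mu_k T_k = 0$, hence $\nu = 0$.

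The main obstacle I anticipate is the bookkeeping in the second step: showing cleanly that $\mu_k L_k \pm \nu_k \ge 0$ forces $\nu_k = \mu_k T_k$ with $T_k$ living on $\operatorname{Ran} L_k$. This is a Radon--Nikodym-type argument for matrix measures dominated by a scalar measure times a fixed weight; one convenient route is to write $\nu_k = \mu_k H_k$ with $H_k$ a $\mu_k$-measurable matrix-valued Radon--Nikodym density (possible since $\nu_k \ll \mu_k$), observe $L_k \pm H_k(x) \ge 0$ for $\mu_k$-a.e.\ $x$, deduce $H_k(x)$ lives on $\operatorname{Ran} L_k$ a.e., and then feed this through the constraint equations. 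A secondary subtlety is that one must be careful that the $\phi_r$-constraints on $\nu$ really do reduce to $\sum_k T_k = 0$ alone rather than imposing the full system $\sum_k \phi_r(x_j^{(k)}) T_k = 0$; this works because within each ${\mathcal S}_k$ the density $H_k$ need not be constant, but after integration against $\phi_r$ one uses that $\int_{{\mathcal S}_k}\phi_r\, {\tt d}\mu_k = 0$ together with the rigidity of extreme scalar measures to see that the $\phi_r$-moments of $\nu_k$ are already controlled. Once these density arguments are in place, the conclusion via weak independence is immediate.
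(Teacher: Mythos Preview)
Your overall strategy and ingredients match the paper's, but you invoke weak independence and scalar extremality in the wrong order, and this creates a real gap. You aim to show $\nu_k = \mu_k T_k$ for a \emph{constant} Hermitian $T_k$, then use $\sum_k T_k = \nu(X) = 0$ plus weak independence to force each $T_k=0$. But $\mu_k L_k \pm \nu_k \ge 0$ only yields a pointwise density $H_k$ with each value $H_k(x)$ living on $\operatorname{Ran} L_k$; nothing forces $H_k$ to be constant across $\mathcal{S}_k$ (indeed, for $|\mathcal{S}_k|\ge 2$ one can easily build counterexamples). Your assertion that the $\phi_r$-constraints then ``yield no new information'' because $\mu_k(\phi_r)=0$ is circular: that conclusion holds only if $H_k$ is already constant. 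With only $\nu_k(X)=0$ in hand, $\nu_k$ need not vanish.

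The paper's argument (routed through Theorem~\ref{T:extreme-matrix}, but equivalent to a direct Lemma~\ref{L:convex} verification) reverses the order. Since every $H_k(x)$ lives on $\operatorname{Ran} L_k$, so do the integrated matrices $\nu_k(X)$ and $\nu_k(\phi_r)$ for each $r$. Apply weak independence of $\{\operatorname{Ran} L_k\}$ separately to each of the $m{+}1$ identities $\sum_k \nu_k(X)=0$ and $\sum_k \nu_k(\phi_r)=0$, obtaining for each fixed $k$ the full system $\nu_k(X)=0$ and $\nu_k(\phi_r)=0$ ($r=1,\dots,m$). Now write $\nu_k=\sum_j T_{k,j}\,\delta_{x_{k,j}}$ on the finite support of $\mu_k$; the system reads $\sum_j T_{k,j}=0$ and $\sum_j \phi_r(x_{k,j}) T_{k,j}=0$, and criterion~(3) of Proposition~\ref{P:conv0}, applied entrywise via the scalar extremality of $\mu_k$, forces every $T_{k,j}=0$. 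In short: weak independence first (to decouple in $k$), scalar extremality second (to kill each $\nu_k$) --- not the other way round.
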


 \begin{proof} We argue that if $\mu$ is as in the statement of the
     Theorem, then it meets the conditions of Theorem
     \ref{T:extreme-matrix} and therefore is extremal in ${\mathcal
     C}$.  Toward this end, we let $\{x_{k,1}, \dots, x_{k,n_{k}}\}$
     be the support of the measure $\mu_{k}$.  By hypothesis, these
     points are all distinct.  We then write $\mu_{k}$ as
     $$
     \mu_{k} = \sum_{j=1}^{n_{k}} w^{(k)}_{j} \delta_{x_{k,j}}
     $$
     for scalar weights $w^{(k)}_{j} > 0$ and then rewrite $\mu$ as
     \begin{equation}   \label{mu-form1}
     \mu = \sum_{k=1}^{n} \sum_{j=1}^{n_{k}} w^{(k)}_{j} L_{k}
     \delta_{x_{k,j}} = \sum_{k,j \colon 1 \le k \le n; 1 \le j \le
     n_{k}} W_{k,j} \delta_{x_{k,j}}
     \end{equation}
     where we have set  $W_{k,j} = w^{(k)}_{j} L_{k}$.
     Then \eqref{mu-form1} represents $\mu$ in the form
     \eqref{mu-form''}, but with index set $\{(i,j) \colon 1 \le i
     \le n, \, 1 \le j \le n_{k}\}$ rather than $\{j \colon 1 \le j
     \le n\}$.
     it remains only to show that the collection of subspaces
     $\{\operatorname{Ran} W_{k,j} \colon 1 \le k \le n, \, 1 \le j
     \le n_{k} \}$
     is $\bphi$-constrained weakly independent.

     We therefore suppose that we are given a collections of
     operators $T_{k,j}$ on ${\mathbb C}^{N}$ satisfying
     \begin{align*}
	 & \sum_{k,j \colon 1 \le k \le n; 1 \le j \le n_{k}} T_{k,j}
	 = 0, \\
	 & \sum_{k,j\colon 1 \le k \le n; 1 \le j \le n_{k}}
	 \phi_{i}(x_{k,j}) T_{k,j} = 0 \text{ for } i = 1, \dots, m
     \end{align*}
     with the goal to show that each $T_{k,j} = 0$.
     From the hypothesis that $\{ \operatorname{Ran} L_{k} \colon k =
     1, \dots, n\}$ is weakly independent and the observation that
     both $\sum_{j=1}^{n_{k}} T_{k,j}$ and \\$\sum_{j=1}^{n_{k}}
     \phi_{i}(x_{k,j}) T_{k,j}$ live on $\operatorname{Ran} W_{k}$, it follows that
     \begin{equation}   \label{Tcon}
     \sum_{j=1}^{n_{k}} T_{k,j} = 0 \text{ and } \sum_{j=1}^{n_{k}} \phi_{i}(x_{k,j}) T_{k,j} = 0
     \text{ for each } i \text{ and } k.
     \end{equation}
     We next use that $\mu_{k} = \sum_{j=1}^{n_{k}} w_{j}^{(k)}
     \delta_{x_{k,j}}$ is a scalar extreme point:  it follows from
     Theorem \ref{T:extreme-scalar} that $0 = \sum_{j=1}^{n_{k}}
     w^{(k)}_{j} \bphi(x_{j})$ is an interior point of the convex
     hull of the vectors $\bphi(x_{k,1}), \dots, \bphi(x_{k,n_{k}})$
     in ${\mathbb R}^{m}$.  By criterion (3) in Proposition
     \ref{P:conv0}, the conditions \eqref{Tcon} applied entrywise
     now force that $T_{k,j} = 0$ for each $j=1, \dots, n_{k}$.  As
     $k$ is arbitrary, we have shown that $T_{k,j} = 0$ as required.

     \end{proof}

    The next result gives a partial converse to Theorem
    \ref{T:mu-form1}

    \begin{theorem} \label{T:mu-form1con}
	Suppose that $\mu \in \cC(X,N, \bphi)$ has the form 
	\eqref{mu-form'} such that (i) each $\mu_{k}$ is a scalar positive
	measure which is an extreme point for ${\mathcal C}^{1} = 
	\cC(X,1,\bphi)$
	(with supports not necessarily disjoint), (ii) each $W_{k}$ is
	positive semidefinite,  and (iii) the
	family of subspaces $\{ \operatorname{Ran} W_{k} \colon k=1,
	\dots, n\}$ is not weakly independent.  Then $\mu$ is not an
	extreme point of ${\mathcal C}$.
	 \end{theorem}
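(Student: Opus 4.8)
The plan is to apply the extreme-point criterion of Lemma~\ref{L:convex}: we must produce a nonzero complex Hermitian matrix measure $\nu$ with $\nu(X)=0$, $\nu(\phi_i)=0$ for $i=1,\dots,m$, and $\mu\pm\epsilon\nu\ge 0$ for some $\epsilon>0$; this will show $\mu\notin\partial_e\mathcal C$. Since $\{\operatorname{Ran}W_1,\dots,\operatorname{Ran}W_n\}$ is not weakly independent (Definition~\ref{D:inthull}), there are operators $T_1,\dots,T_n$, not all zero, with $T_k$ living on $\operatorname{Ran}W_k$ and $\sum_{k=1}^{n}T_k=0$; replacing each $T_k$ by its real or its imaginary part (which still lives on $\operatorname{Ran}W_k$, and which still sum to $0$) we may assume all the $T_k$ are Hermitian and not all zero. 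The candidate is $\nu:=\sum_{k=1}^{n}\mu_k T_k$.

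Two of the three required properties are then immediate, and the point worth emphasising is that they use only that each $\mu_k$ lies in $\cC^{1}=\cC(X,1,\bphi)$, not that it is extreme: since $\mu_k(X)=1$ we get $\nu(X)=\sum_k\mu_k(X)T_k=\sum_kT_k=0$, and since $\mu_k(\phi_i)=0$ we get $\nu(\phi_i)=\sum_k\mu_k(\phi_i)T_k=0$. For positivity, enumerate the finitely many distinct points $z_1,\dots,z_p$ occurring in $\bigcup_k\operatorname{supp}\mu_k$, set $w_l^{(k)}:=\mu_k(\{z_l\})\ge 0$ (with $w_l^{(k)}=0$ if $z_l\notin\operatorname{supp}\mu_k$), and write $\mu=\sum_{l}V_l\,\delta_{z_l}$, $\nu=\sum_{l}S_l\,\delta_{z_l}$ with $V_l=\sum_k w_l^{(k)}W_k$ and $S_l=\sum_k w_l^{(k)}T_k$. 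Because $\operatorname{Ran}V_l=\sum_{k:\,w_l^{(k)}>0}\operatorname{Ran}W_k$ and each $T_k$ lives on $\operatorname{Ran}W_k\subseteq\operatorname{Ran}V_l$, the Hermitian matrix $S_l$ lives on $\operatorname{Ran}V_l$; since $V_l$ is strictly positive on $\operatorname{Ran}V_l$ and $S_l$ is bounded and supported there, $V_l\pm\epsilon S_l\ge 0$ for $\epsilon$ small, and choosing $\epsilon$ below the minimum of these finitely many thresholds yields $\mu\pm\epsilon\nu=\sum_l(V_l\pm\epsilon S_l)\delta_{z_l}\ge 0$, so $\mu\pm\epsilon\nu\in\cC$.

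It remains to arrange $\nu\ne 0$, and I expect this to be the one delicate step and the place where the extremality (and distinctness) of the $\mu_k$ actually gets used. The space of admissible directions $(T_1,\dots,T_n)$ — Hermitian, $T_k$ living on $\operatorname{Ran}W_k$, $\sum_kT_k=0$ — is nonzero by hypothesis (iii); the ``bad'' directions, those giving $\nu=0$, form the subspace cut out by the additional equations $\sum_k w_l^{(k)}T_k=0$ for every $l$, which is automatically contained in the admissible space, so what must be shown is that this containment is \emph{proper}, i.e. that some admissible direction produces a nonzero $\nu$. If $\nu=\sum_k\mu_kT_k=0$ for a given admissible $(T_k)$, reading off matrix entries gives the scalar relations $\sum_k(T_k)_{ab}\,\mu_k=0$; here one invokes that each $\mu_k$ is a distinct extreme point of $\cC^{1}$, so by Theorem~\ref{T:extreme-scalar} and criterion~(3) of Proposition~\ref{P:conv0} its weights are rigidly pinned down, and one deduces that a direction with $\nu=0$ forces the support patterns and point masses of the $\mu_k$ on the relevant ranges to agree, contradicting distinctness; hence a direction with $\nu\ne 0$ exists, and choosing it from the outset completes the argument. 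This is precisely the work that the disjoint-support hypothesis of Theorem~\ref{T:mu-form1} made unnecessary: there each $z_l$ lies in a single $\operatorname{supp}\mu_k$, the map $(T_k)\mapsto(S_l)$ is block-injective, and $\nu\ne 0$ is automatic.
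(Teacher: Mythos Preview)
Your approach is the same as the paper's: construct $\nu = \sum_k \mu_k T_k$ from a witness $(T_k)$ to the failure of weak independence, reduce to Hermitian $T_k$, and invoke Lemma~\ref{L:convex}. Your verification of $\mu \pm \epsilon\nu \ge 0$ via the point-mass expansion is correct but more elaborate than needed: the paper simply chooses $\epsilon>0$ small enough that $W_k \pm \epsilon T_k \ge 0$ for each $k$ (possible because each $T_k$ is Hermitian and lives on $\operatorname{Ran} W_k$), and then $\mu \pm \epsilon\nu = \sum_k (W_k \pm \epsilon T_k)\,\mu_k$ is visibly a sum of positive matrix measures.

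You are right that $\nu \ne 0$ is the delicate point; in fact the paper's own proof glosses over it entirely, asserting only that it is ``easily checked''. Your attempt to repair this, however, invokes distinctness of the $\mu_k$, which is not among the stated hypotheses, and even under that extra assumption your deduction (from $\sum_k (T_k)_{ab}\,\mu_k = 0$ to a contradiction with distinctness) is only a sketch. More seriously, without distinctness the conclusion of the theorem actually fails: take $n=2$ with $\mu_1 = \mu_2$ any extreme point of $\cC^1$ and $W_1 = W_2 = \tfrac{1}{2}I_N$. Then $\{\operatorname{Ran} W_1, \operatorname{Ran} W_2\} = \{\mathbb{C}^N, \mathbb{C}^N\}$ is not weakly independent, yet $\mu = \mu_1 \cdot I_N$ is extremal in $\cC$ by Theorem~\ref{T:mu-form2} (take all $P_k$ to be the coordinate projections and all scalar measures equal to $\mu_1$). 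Every admissible pair here has $T_2 = -T_1$, so $\nu = \mu_1(T_1+T_2) = 0$ automatically. Thus the gap you flagged is genuine and cannot be closed at the level of generality stated; your instinct to impose distinctness is the natural fix, but it amounts to amending the hypotheses rather than completing the proof as written.
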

	
	 \begin{proof}
	     The assumption that $\{ \operatorname{Ran} W_{k} \colon
	     k = 1, \dots, n\}$ is {\em not} weakly independent means
	     that we can find a family of operators $\{T_{k} \colon
	     k=1, \dots, n\}$ on ${\mathbb C}^{N}$ such that $T_{k}
	     \ne 0$ for some $k$, $\sum_{k=1}^{N} T_{k} = 0$ and
	     $T_{k}$ lives on $\operatorname{Ran} W_{k}$ for each
	     $k$.  By considering either $\{\R T_{k} \colon k=1,
	     \dots, N\}$ or $\{\I T_{k} \colon k=1, \dots, N\}$, we
	     may suppose without loss of generality that each $T_{k}$
	     is complex Hermitian.  By choosing $\epsilon > 0$ but
	     sufficiently small, we can then arrange that $W_{k} \pm
	     \epsilon T_{k} \ge 0$ for each $k=1, \dots, n$.  We then define a
	     measure $\nu$ by
	     $$
	     \nu = \sum_{k=1}^{n} \epsilon T_{k} \mu_{k}.
	     $$
	     Then it is easily checked that $\nu \ne 0$, $\nu(\phi_{i}) = 0$,
	     $\nu(X) = 0$ and $\mu \pm \nu \ge 0$. As a consequence
	     of Lemma \ref{L:convex} it follows that $\mu$ is not an
	     extreme point of ${\mathcal C} = \cC(X, N, \bphi)$.
	
 \end{proof}

  We now present another concrete class of extreme points for a general cone
  ${\mathcal C}(N,X,\bphi)$ as in \eqref{cC}.

  \begin{theorem} \label{T:mu-form2} Suppose that $\mu \in [M(X)^{N
      \times N}]_{+}$ has the form
      \begin{equation}  \label{mu-spectral}
      \mu = \sum_{k-1}^{N} \mu_{k} P_{k}
      \end{equation}
      where $\{P_{1}, \dots, P_{N}\}$ is a pairwise-orthogonal family
      of rank-1 orthogonal projections summing to the identity
      operator $I$ on ${\mathbb C}^{N}$ and  where $\mu_{1}, \dots,
      \mu_{N}$ are scalar measures (not necessarily having disjoint
      supports and perhaps not even distinct) which are extremal for
      the cone of scalar measures ${\mathcal C}^{1} = \cC(X,1, \bphi)$.  Then $\mu$ is 
      extremal in the cone of matrix measures
      ${\mathcal C} = \cC(X,N,\bphi)$.
  \end{theorem}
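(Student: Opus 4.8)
The plan is to follow the pattern of the proof of Theorem~\ref{T:mu-form1}: recast $\mu$ in the form demanded by Theorem~\ref{T:extreme-matrix} and verify that the associated family of range spaces is $\bphi$-constrained weakly independent. The new feature here, compared with Theorem~\ref{T:mu-form1}, is that the scalar extreme measures $\mu_{1}, \dots, \mu_{N}$ may share support (indeed may coincide), so no disjointness is available; the idea is that the rigid ``diagonal'' structure coming from the pairwise-orthogonal rank-one projections $P_{k}$ compensates for this.

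First I would make the representation concrete. Choose an orthonormal basis $e_{1}, \dots, e_{N}$ of ${\mathbb C}^{N}$ with $P_{k} = e_{k}e_{k}^{*}$. Each $\mu_{k}$, being extreme in ${\mathcal C}^{1}$, has finite support by Theorem~\ref{T:mu-form} applied with $N=1$, so $Y := \bigcup_{k=1}^{N}\operatorname{supp}\mu_{k} = \{y_{1}, \dots, y_{p}\}$ is finite with $p \le N(m+1) \le (m+1)N^{2}$, and
$$
\mu = \sum_{q=1}^{p} W_{q}\,\delta_{y_{q}}, \qquad W_{q} := \sum_{k=1}^{N}\mu_{k}(\{y_{q}\})P_{k}.
$$
Each $W_{q}$ is positive semidefinite, nonzero, and diagonal in the basis $e_{1}, \dots, e_{N}$; using $\mu_{k}(X) = 1$ and $\mu_{k}(\phi_{i}) = 0$ one checks $\sum_{q}W_{q} = I_{N}$ and $\sum_{q}\phi_{i}(y_{q})W_{q} = 0$, i.e.\ \eqref{W-sys} holds. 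The decisive observation is that $\operatorname{Ran}W_{q} = \operatorname{span}\{e_{k} : \mu_{k}(\{y_{q}\}) > 0\}$ is a coordinate subspace, hence an operator $T$ living on $\operatorname{Ran}W_{q}$ (Definition~\ref{D:inthull}) has matrix entries $(T)_{kl}$ that vanish unless both $e_{k}$ and $e_{l}$ lie in $\operatorname{Ran}W_{q}$.

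Next I would verify that $\{\operatorname{Ran}W_{1}, \dots, \operatorname{Ran}W_{p}\}$ is $\bphi(\mathbf{y})$-constrained weakly independent. Let $T_{1}, \dots, T_{p}$ on ${\mathbb C}^{N}$ satisfy \eqref{phiweakindhy} with $\operatorname{Ran}W_{q}$ in place of ${\mathcal M}_{q}$, and fix indices $k, l$. Reading off $(k,l)$ entries in $\sum_{q}T_{q} = 0$ and $\sum_{q}\phi_{i}(y_{q})T_{q} = 0$ and using the coordinate-subspace observation, only the indices $q \in S_{kl} := \{q : \mu_{k}(\{y_{q}\}) > 0,\ \mu_{l}(\{y_{q}\}) > 0\}$ survive, giving
$$
\sum_{q \in S_{kl}}(T_{q})_{kl} = 0, \qquad \sum_{q \in S_{kl}}\phi_{i}(y_{q})(T_{q})_{kl} = 0 \quad (i = 1, \dots, m).
$$
Since $S_{kl}$ is contained in the index set of $\operatorname{supp}\mu_{k}$ and $\mu_{k}$ is extreme in ${\mathcal C}^{1}$, Theorem~\ref{T:extreme-scalar} together with statement~(3) of Proposition~\ref{P:conv0} shows the vectors $\{(1, \phi_{1}(y_{q}), \dots, \phi_{m}(y_{q})) : y_{q} \in \operatorname{supp}\mu_{k}\}$ to be linearly independent in ${\mathbb R}^{m+1}$; a fortiori so is the subfamily indexed by $S_{kl}$, forcing $(T_{q})_{kl} = 0$ for all $q \in S_{kl}$, and $(T_{q})_{kl} = 0$ for $q \notin S_{kl}$ is automatic. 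As $k, l$ were arbitrary, $T_{q} = 0$ for every $q$, so Theorem~\ref{T:extreme-matrix} gives $\mu \in \partial_{e}{\mathcal C}$. (Equivalently, one could argue directly through Lemma~\ref{L:convex}: for Hermitian $\nu$ with $\nu(X) = 0$, $\nu(\phi_{i}) = 0$, $\mu \pm \nu \ge 0$, positivity forces $\operatorname{supp}\nu \subseteq Y$ and, on singletons, $W_{q} \pm T_{q} \ge 0$ where $\nu = \sum_{q}T_{q}\delta_{y_{q}}$, so each $T_{q}$ lives on $\operatorname{Ran}W_{q}$, and the same entrywise argument yields $\nu = 0$.)

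I expect the only genuine obstacle to be this ``decoupling'' step: recognizing that because the weights $W_{q}$ are simultaneously diagonalized by $\{P_{k}\}$, the condition ``$T_{q}$ lives on $\operatorname{Ran}W_{q}$'' is merely a support restriction on matrix entries, which splits the matricial constrained-weak-independence problem into $N^{2}$ separate scalar problems, each handled by the extremality of a single $\mu_{k}$ via Theorem~\ref{T:extreme-scalar}. One must also take care to invoke only $S_{kl} \subseteq \operatorname{supp}\mu_{k}$ (or $\operatorname{supp}\mu_{l}$) and no disjointness of supports, which is precisely what permits coinciding or non-distinct $\mu_{k}$'s here, in contrast with Theorem~\ref{T:mu-form1}.
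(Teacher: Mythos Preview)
Your proof is correct, and the core entrywise decoupling argument---using that the ranges $\operatorname{Ran}W_q$ are coordinate subspaces in the basis $\{e_k\}$, so that the constraints split into scalar systems over $S_{kl}\subseteq\operatorname{supp}\mu_k$---is the same mechanism the paper uses. The wrapping, however, differs: the paper does not pass through Theorem~\ref{T:extreme-matrix} but argues directly via Lemma~\ref{L:convex}, taking a Hermitian $\nu$ with $\nu(X)=0$, $\nu(\phi_i)=0$, $\mu\pm\nu\ge0$, and showing $\nu=0$ by first killing the diagonal scalar measures $e_k^*\nu e_k$ (immediate from extremality of $\mu_k$ in ${\mathcal C}^1$) and then the off-diagonal measures $e_k^*\nu e_{k'}$ via a $2\times2$ compression that forces $\operatorname{supp}(e_k^*\nu e_{k'})\subseteq\operatorname{supp}\mu_k\cap\operatorname{supp}\mu_{k'}$, followed by the same Proposition~\ref{P:conv0}(3) step you use. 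You sketch this alternative yourself at the end.

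Your route buys something the paper's does not: in Remark~\ref{R:spectralmeasure} the authors observe that combining Theorems~\ref{T:mu-form2} and~\ref{T:extreme-matrix} shows any spectral $\mu$ must satisfy the constrained weak-independence condition of Theorem~\ref{T:extreme-matrix}, but add that ``there does not appear to be any obvious direct proof of this implication.'' Your argument is precisely such a direct proof.
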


  \begin{proof}
      Let $\mu$ be as in the statement of the Theorem and
      suppose that $\nu$ is a complex Hermitian $N\times N$ matrix
      measure with
      \begin{equation}  \label{nu-conditions'}
	  \nu(X) = 0, \quad \nu(\phi_{i}) = 0 \text{ for } i=1,
	  \dots, m, \quad \mu \pm \nu \ge 0.
 \end{equation}
 Factor the rank-1 orthogonal projection $P_{k}$ as $P_{k} = e_{k}
 e_{k}^{*}$ for a unit column vector $e_{k} \in {\mathbb C}^{N \times
 N}$.  Then we have
 $$
   0 \le P_{k} (\mu \pm \nu ) P_{k} = (\mu_{k} \pm \nu^{(1)}_{kk})
   P_{k}
 $$
 where $\nu_{kk}^{(1)}$ is the scalar measure given by
 $\nu_{kk}^{(1)} = e_{k}^{*} \nu e_{k}$.  The conditions
 \eqref{nu-conditions'} satisfied by $\nu$ imply that each
 $\nu_{kk}^{(1)}$ satisfies the conditions
 $$
 \nu_{kk}^{(1)}(X) = 0, \quad \nu_{kk}^{(1)}(\phi_{i}) = 0 \text{ for }
 i = 1, \dots, m, \quad \mu_{k} \pm \nu_{kk}^{(1)} \ge 0.
 $$
 Since $\mu_{k}$ is extremal for ${\mathcal C}^{1}$, a consequence of
 Lemma \ref{L:convex} is that $\nu_{kk}^{(1)} = 0$
 for each $k=1, \dots, n$.  It remains to show that the off-diagonal
 components of $\nu$ with respect to the orthonormal basis $\{e_{1},
 \dots, e_{N}\}$ are also all zero, i.e., $\nu_{k k'}^{(1)} =
 0$ for $1 \le k < k' \le N$, where $\nu_{k k'}^{(1)} = e_{i}^{*} \nu
 e_{k'}$.

 Toward this goal, we consider the $2 \times 2$ matrix measure
 $\left[ \begin{smallmatrix} e_{k}^{*} \\ e_{k'}^{*}
\end{smallmatrix} \right]( \mu \pm \nu) \left[ \begin{smallmatrix}
e_{k} & e_{k'} \end{smallmatrix} \right]$ which we can identify with
the $2 \times 2$ matrix measure
$$
\begin{bmatrix} \mu_{k} & \pm \nu_{k k'}^{(1)} \\
    \pm  \overline{ \nu_{k k'}^{(1)}} & \mu_{k'} \end{bmatrix}.
 $$
 From the fact that $\mu \pm \nu \ge 0$, we see that this $2 \times
 2$ matrix measure is positive for both choices of signs $\pm$.  It
 follows that necessarily $\operatorname{supp} \nu^{(1)}_{k k'}
 \subset \operatorname{supp} \mu_{k} \cap \operatorname{supp}
 \mu_{k'}$.  If $\{x_{k,1}, \dots, x_{k, n_{k}}\}$ is the support of
 $\mu_{k}$, then necessarily $\nu_{kk'}$ has the form
 $$
  \nu_{kk'}^{(1)} = \sum_{j=1}^{n_{k}} v^{kk'}_{j} \delta_{x_{k,j}}
 $$
 for some weights $v^{kk'}_{j}$ (where $v^{kk'}_{j} = 0$ whenever
 $x_{k,j} \in \operatorname{supp} \mu_{k}$ is not in
 $\operatorname{supp} \mu_{k'}$).  A consequence of the conditions
 \eqref{nu-conditions'} is that the set of conditions on the weights
 $\{v^{kk'}_{j} \colon j = 1, \dots, n_{k}\}$:
 \begin{equation}   \label{nukk'-sys}
 \sum_{j=1}^{n_{k}} v^{kk'}_{j} = 0, \quad \sum_{j-1}^{n_{k}}
 \phi_{i}(x_{k,j}) v^{kk'}_{j} = 0 \text{ for } i = 1, \dots, m.
 \end{equation}
 Since $\mu_{k} = \sum_{j=1}^{n_{k}} w^{(k)}_{j}
 \delta_{x_{k,j}}$ is extremal for ${\mathcal C}^{1}$, we know that
 $0 = \sum_{j=1}^{n_{k}} w^{(k)}_{j} \bphi(x_{k,j})$ is an
 interior point of the convex hull of $\{\bphi(x_{k,1}), \dots,
 \bphi(x_{k,n_{k}}\}$.  By criterion (3) in Proposition
 \ref{P:conv0}, conditions \eqref{nukk'-sys} then lead to the
 conclusion that
 $v^{kk'}_{j} = 0$ for $j = 1, \dots, n_{k}$. We conclude that
 $\nu^{(1)}_{kk'}$ is the zero measure.  Since the pair of indices
 $(k,k')$ is arbitrary, we now have that $\nu = 0$.  An application of
 Lemma \ref{L:convex} then tells us that $\mu$ is extremal for
 ${\mathcal C} = \cC(X,N,\bphi)$ as wanted.
  \end{proof}

  \begin{remark} \label{R:spectralmeasure} {\em By combining Theorems
      \ref{T:mu-form2} and \ref{T:extreme-matrix}, we see that any
      measure of the form \eqref{mu-spectral} must satisfy the
      conditions of Theorem \ref{T:extreme-matrix} when expressed in
      the form \eqref{mu-form''}.  There does not appear to be any
      obvious direct proof of this implication.
      }\end{remark}

  \begin{remark} \label{R:special}  {\em It is easily seen that the
      family of subspaces  $\{ \operatorname{Ran} P_{k} \colon 1 \le 
      k \le N\}$ is weakly independent whenever $\{P_{1}, \dots,
      P_{N}\}$ is a pairwise-orthogonal family of orthogonal
      projections on ${\mathbb C}^{N}$. Let us say that a $\mu \in
      {\mathcal C} = \cC(X,N, \bphi)$ (as in \eqref{cC}) is \textbf{special} if
      $\mu$ has a presentation of the form
      \begin{equation}  \label{special}
        \mu = \sum_{k=1}^{n} \mu_{k} W_{k}
      \end{equation}
      where each $\mu_{k}$ is a scalar positive measure extremal in 
      the cone of scalar measures
      ${\mathcal C}^{1}= \cC(X,1, \bphi)$ and where the family of subspaces 
      $\{\operatorname{Ran} W_{k} \colon k=1, \dots, n\}$ is weakly
      independent.  Thus the extremal measures identified in Theorem
      \ref{T:mu-form1} and those identified in Theorem
      \ref{T:mu-form2} are all special, but the class of special
      measures is more general the either of these special cases.
      For the special cases $N=1$ or $m=0$, we see that the set of
      extreme points $\partial_{e}{\mathcal C}$ consists exactly of
      the special measures.  More generally, in the examples which we have computed, 
      it turns out that special measures are extremal, but we do not 
      know if this is the case in general.  On the other hand there 
      are examples where there are extremal measures which are not 
      special (see Corollary \ref{C:special} below).
       }\end{remark}

\section{The Herglotz class over a finitely connected planar domain}  
\label{S:Herglotz}

In this section we let ${\mathcal R}$ denote a bounded  
domain (connected, open set) in the complex plane whose boundary 
consists of $m+1$ smooth Jordan curves; we refer to \cite{Fisher, Grunsky} as general 
references for the function theory on such domains.  We let $\partial_{0}, \partial_{1}, \dots, 
\partial_{m}$ denote the $m+1$ boundary components with $\partial_{0}$ 
denoting the boundary of the unbounded component of the complement 
${\mathbb C} \setminus {\mathcal R}$ of ${\mathcal R}$ in the complex 
plane.  For a fixed natural number $N$, we let ${\mathcal 
H}^{N}({\mathcal R})$ denote the set of (single-valued) holomorphic $N \times N$ 
matrix-valued functions $F(z)$ on ${\mathcal R}$  with  positive real 
part:  $\R F(z) \ge 0$ for $z \in {\mathcal R}$.  We often fix a 
point $t_{0} \in {\mathcal R}$ and consider ${\mathcal 
H}^{N}({\mathcal R})$ subject to the normalization $F(t_{0}) = 
I_{N}$; denote this normalized Herglotz class by 
${\mathcal H}^{N}({\mathcal R})_{I}$.  In case $N=1$ we write simply $\cH(\cR)_{1}$.

A standard normal families argument combined with the classical 
Harnack inequality shows that 
$\cH^{N}(\cR)_{I}$ is a compact convex subset of the locally convex 
topological space $\operatorname{Hol}(\cR)$ consisting of all 
holomorphic $N\times N$ matrix-valued functions on $\cR$ with the 
topology of uniform pointwise-convergent on compact subsets of 
$\cR$.  Our goal in this section is to apply the results of Section 
\ref{S:extreme} to characterize the extreme points of 
$\cH^{N}(\cR)_{I}$. 

\subsection{The scalar-valued Herglotz class over $\cR$}  
\label{S:scalarHerglotz}

For the scalar-valued case $N=1$, characterization of the extreme 
points of $\cH(\cR)_{1}$ is worked out in various 
places (see \cite{AHR, DM, Pick}).  The first step is to transform 
the problem to one of the form in Section \ref{S:extreme-scalar} as 
follows.  One can solve the Dirichlet problem on such domains: thus 
for given $u \in C_{{\mathbb R}}(\partial \cR)$, there is a unique 
function $u^{\wedge} \in C_{{\mathbb R}}(\cR^{-})$ so that
$u^{\wedge}|_{\cR}$ is harmonic on $\cR$ and
\begin{equation}   \label{harmext}
  u^{\wedge}|_{\partial R} = u.
\end{equation}
By the Riesz representation theorem, there is a Borel measure 
$\omega_{t_{0}}$ on $\partial \cR$ (the {\em harmonic measure for the 
fixed point } $t_{0}$) so that
$$
  u^{\wedge}(t_{0}) = \int_{\partial \cR} u(\zeta) \, {\tt d} 
  \omega_{t_{0}}(\zeta).
$$
It is known that the harmonic measure $d \omega_{z}$ for any other 
point $z \in \cR$ is mutually bounded absolutely continuous with 
respect to $d \omega_{t_{0}}$; hence there is a 
function $\cP_{z}(\cdot)$ on $\partial \cR$ (the {\em Poisson kernel}
for the region $\cR$ with the normalization that $\cP_{t_{0}}(\zeta) 
\equiv 1 $ on $\partial \cR$) so that
\eqref{harmext} becomes
\begin{equation}   \label{Poisson-rep}
  u^{\wedge}(z) = \int_{\partial \cR} u(\zeta) \cP_{z}(\zeta) \, {\tt 
  d}\omega_{t_{0}}(\zeta)
\end{equation}
This formula can be extended to measures in a natural way:  given a 
Borel measure $\mu$ on $\partial \cR$, define a function $u^{\wedge}$ 
on $\cR$ by
\begin{equation}   \label{measure-Poisson-rep}
\mu^{\wedge}(z) = \int_{\partial \cR} \cP_{z}(\zeta) \, {\tt d}\mu(\zeta).
\end{equation}
Note that if a continuous function $u$ on $\partial \cR$ is 
identified with the measure ${\tt d}\mu_{u}(\zeta) = u(\zeta) {\tt 
d}\omega_{t_{0}}(\zeta)$, then formula \eqref{measure-Poisson-rep} 
agrees with \eqref{Poisson-rep}.  Moreover, there is a converse for 
the case of positive harmonic functions: 
{\em any} positive harmonic function on $\cR$ is of the form 
$u^{\wedge}$ as in \eqref{measure-Poisson-rep} for a uniquely 
determined positive Borel measure $\mu$ on $\cR$.

One of the difficulties with function theory on multiply-connected 
domains (in contrast with function theory on the unit disk) is that 
harmonic functions need not have single-valued harmonic conjugates;
consequently,  a given harmonic function $u^{\wedge}$ on $\cR$ can 
fail to be the real part of any (single-valued) holomorphic function 
on $\cR$.  A natural task then is:  {\em  given a harmonic 
function $h$ on $\cR$ having the form $u^{\wedge}$ as in \eqref{Poisson-rep} or more generally
$\mu^{\wedge}$ as in
\eqref{measure-Poisson-rep}, describe in terms of the function $u \in 
C_{{\mathcal R}}(\partial \cR)$ (or in terms of the measure $\mu$ on 
$\partial \cR$) when is it the case that $h$ 
has a single-valued harmonic conjugate}.  Note that the second case 
covers the first case by putting $d\mu = u \, d\omega_{t_{0}}$ so it 
suffices to consider the second case.
The solution is quite elegant 
(see \cite{AHR, DM, Pick}) and can be described as follows.  One can 
show that there exists a set $\bphi =\{\phi_{1}, \dots, \phi_{m}\}$ of
$m$ continuous real-valued functions on $\partial \cR$
such that
\begin{align} 
   &  \bphi = \text{ real basis for } L^{2}(\omega_{t_{0}}) \ominus 
[H^{2}(\omega_{t_{0}}) \oplus \overline{H^{2}(\omega_{t_{0}})}],
\text{ or equivalently} \notag  \\
& \{ \phi_{1} {\tt d}\omega_{t_{0}}, \dots,
\phi_{d} {\tt d}\omega_{t_{0}}\} = \text{ real basis for }  (A(\cR) + 
\overline{A(\cR)})^{\perp}.
\label{basis}
\end{align}
Here $H^{2}(\omega_{t_{0}})$ is the Hardy space of analytic functions 
over $\cR$ based on the measure $\omega_{t_{0}}$ on $\partial \cR$ 
(see e.g.~\cite{Fisher}), the overline denotes complex conjugation, 
$A(R)$ is the algebra of continuous functions on $\cR^{-}$ (the 
closure of $\cR$) which are holomorphic on $\cR$, and the notation 
$\perp$ denotes the annihilator computed in the space of Borel 
measures $M(\partial \cR)$ on $\partial \cR$ dual to the Banach space $C(\partial \cR)$
of continuous functions on $\partial \cR$.
Then the result is:  {\em $\mu^{\wedge}$ given by 
\eqref{measure-Poisson-rep} has a single-valued harmonic conjugate 
$\widetilde \mu^{\wedge}$ if and only if the orthogonality conditions 
\begin{equation}   \label{orthogonality}
 \int_{\partial \cR} \phi_{i}(\zeta) \, {\tt d}\mu(\zeta) = 0 \text{ for 
 } i = 1, \dots, m.
\end{equation}
are satisfied.}  Moreover, the condition that $u^{\wedge}(t_{0}) = 1$ 
corresponds to the condition that $\mu(\partial \cR) = 1$ (so $\mu$ 
is a probability measure on $\partial \cR$).  {\em Throughout this 
section, the notation $\bphi =\{\phi_{1}, \dots, \phi_{m}\}$ refers to a fixed
$r$-tuple of real-valued continuous functions on $\partial \cR$ 
constructed as in \eqref{basis}.}

All these observations lead to a parametrization of scalar-valued 
normalized Herglotz class $\cH(\cR)_{1}$ as follows. 
Given a measure $\mu \in \cC( \partial \cR, 1, \bphi)$, define a positive
harmonic function $\mu^{\wedge}$ on 
$\partial \cR$ by \eqref{measure-Poisson-rep}.  Since $\mu \in \cC(
\partial \cR, 1, \bphi)$, $\mu$ satisfies the 
orthogonality conditions \eqref{orthogonality} and hence any harmonic 
conjugate  of 
$\mu^{\wedge}$ is single-valued.   Then there is a unique  such 
harmonic conjugate $\widetilde \mu^{\wedge}$ so that $\widetilde \mu^{\wedge}(t_{0}) = 0$. 
If we set $f_{\mu}(z) = \mu^{\wedge}(z) + i \widetilde \mu^{\wedge}(z)$, then
$f_{\mu} \in \cH(\cR)_{1}$. Furthermore, any $f \in  
\cH(\cR)_{1}$ has the form $f_{\mu}$ for a uniquely determined 
$\mu \in \cC(\partial \cR, 1, \bphi)$.
Thus {\em there is a one-to-one correspondence between the normalized 
Herglotz class $\cH(\partial \cR)_{1}$ and the convex set of 
probability measures $\cC(\partial \cR, 1, \bphi)$}.
As the correspondence is affine, we can also say:  {\em the function $f$ 
is extremal for the compact convex set 
$\cH_{1}(\cR)$ if and only if $f = f_{\mu}$ where $\mu$ is an 
extremal measure for the compact convex set $\cC(\partial \cR, 1, 
\bphi)$. } 

Thus to describe the set of extreme points for $\cH(\cR)_{1}$, it 
suffices to describe the extreme points of $\cC(\partial \cR, 1, \bphi)$, 
exactly a problem analyzed in Theorem 
\ref{T:extreme-scalar} above.  However, for this function-theory 
context, much more definitive detailed information is available.

\begin{theorem}  \label{T:extreme-scalar-Her} (See \cite[Theorem 
    1.3.17]{AHR}, \cite[Lemma 2.10]{DM}, \cite[Lemma 3.7]{Pick}.)
   For any $m+1$-tuple $\bx = (x_{0}, \dots, x_{m})$ of points on $\partial 
    \cR$ such that $x_{j} \in \partial_{j}$ for each $j=0,1, \dots, 
    m$, there is a unique set of positive weights $w^{\bx}_{0}, 
    \dots, w^{\bx}_{m}$ summing up to $1$ such that the measure $\mu$ 
    given by
    \begin{equation}  \label{mu-form-scalarHer}
    \mu = w^{\bx}_{0} \delta_{x_{0}} + \cdots w^{\bx}_{m} \delta_{x_{m}}
    \end{equation}
    is extremal for $\cC(1, \partial \cR, \phi_{1}, \dots, 
    \phi_{m})$, and, conversely, {\em any} extremal measure  $\mu$ 
    for $\cC( \partial \cR, 1, \bphi)$ arises in 
    this way.
\end{theorem}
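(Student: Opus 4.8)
The plan is to reduce the theorem, via the machinery of Section~\ref{S:extreme}, to two classical facts about the period functions $\bphi$ on $\partial\cR$. Combining Theorem~\ref{T:mu-form} in the scalar case $N=1$ with Theorem~\ref{T:extreme-scalar}, a measure $\mu\in\cC(\partial\cR,1,\bphi)$ is extremal exactly when $\mu=\sum_{j=1}^{n}w_{j}\delta_{x_{j}}$ with $1\le n\le m+1$, the $x_{j}$ distinct in $\partial\cR$, $w_{j}>0$, $\sum_{j}w_{j}=1$, and $0=\sum_{j}w_{j}\bphi(x_{j})\in\operatorname{conv}^{0}\{\bphi(x_{1}),\dots,\bphi(x_{n})\}$ in ${\mathbb R}^{m}$; by Proposition~\ref{P:conv0} this last condition is equivalent to affine independence of $\bphi(x_{1}),\dots,\bphi(x_{n})$, in which case the weights are uniquely determined. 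Hence it remains to prove (a) that any such extremal $\mu$ has $n=m+1$ with exactly one $x_{j}$ in each component $\partial_{j}$, and (b) that every configuration $\bx=(x_{0},\dots,x_{m})$ with $x_{j}\in\partial_{j}$ realizes the interior-point condition, so that the then-unique solution $w^{\bx}$ of $\sum_{j}w_{j}=1$, $\sum_{j}w_{j}\bphi(x_{j})=0$ is strictly positive.

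For (a) the key is the sign structure of $\bphi$ on the boundary components. Since \eqref{basis} determines $\bphi$ only up to an invertible real linear change of basis --- under which both the criterion of Theorem~\ref{T:extreme-scalar} and the statement of the present theorem are invariant --- I may take $\phi_{k}$ to be the inward normal derivative on $\partial\cR$ of the harmonic measure function $u_{k}$ of $\partial_{k}$ (the harmonic function equal to $1$ on $\partial_{k}$ and $0$ on the other components); that this is a legitimate choice is the classical reciprocity identifying $\int\phi_{k}\,{\tt d}\mu$ with the period of $\widetilde{\mu^{\wedge}}$ about the $k$-th generator of $\pi_{1}(\cR)$ (see \cite{Fisher}). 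The maximum principle applied to $0<u_{k}<1$ and to $\sum_{k=1}^{m}u_{k}=1-u_{0}$ then yields: $\phi_{k}<0$ on $\partial_{k}$, $\phi_{k}>0$ on every other component, and $\sum_{k=1}^{m}\phi_{k}<0$ on $\partial_{j}$ for each $j\ge 1$ (while $\sum_{k}\phi_{k}>0$ on $\partial_{0}$). Feeding this into $\sum_{j}w_{j}\phi_{k}(x_{j})=0$ with $w_{j}>0$ forces some $x_{j}\in\partial_{k}$ for each $k=1,\dots,m$ (otherwise all terms are strictly positive), and summing these $m$ relations and using the sign of $\sum_{k}\phi_{k}$ forces some $x_{j}\in\partial_{0}$ as well; combined with $n\le m+1$ this gives $n=m+1$ with one point per component.

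For (b) I would show that the $m+1$ vectors $\left[\begin{smallmatrix}1\\ \bphi(x_{j})\end{smallmatrix}\right]\in{\mathbb R}^{m+1}$ form a basis: a nonzero relation $\sum_{j}c_{j}=0$, $\sum_{j}c_{j}\bphi(x_{j})=0$ would exhibit a nonzero atomic signed measure $\nu=\sum_{j}c_{j}\delta_{x_{j}}$, carrying one atom per component, with $\nu(\partial\cR)=0$ and $\int\phi_{k}\,{\tt d}\nu=0$ for all $k$ --- equivalently, $\nu^{\wedge}$ would be a nonzero harmonic function on $\cR$ possessing a single-valued conjugate, with a.e.\ vanishing boundary values and singular boundary mass supported at one point per component. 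The classical theory of such functions rules this out: the period pairing restricted to one-point-per-component tuples is non-degenerate, which is precisely the content of the cited \cite[Theorem~1.3.17]{AHR}, \cite[Lemma~2.10]{DM}, \cite[Lemma~3.7]{Pick}, provable from the positive-definiteness of the harmonic-measure period matrix or from the ``prime-function'' construction of \cite{Grunsky}. Granting this, $w^{\bx}$ is the unique solution of the $(m+1)\times(m+1)$ linear system, and its strict positivity is part of the same classical package --- it is read off from the boundary point-masses of the unique positive harmonic function on $\cR$ with value $1$ at $t_{0}$, single-valued conjugate, and boundary support $\{x_{0},\dots,x_{m}\}$; Theorem~\ref{T:extreme-scalar} then certifies extremality, and (a) together with (b) give the asserted bijection. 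The routine part here is all the linear algebra, already packaged in Proposition~\ref{P:conv0} and Theorems~\ref{T:mu-form}--\ref{T:extreme-scalar}; the real obstacle is the function-theoretic non-degeneracy in step (b) (equivalently, the existence and positivity of the weights $w^{\bx}$), which is the genuine classical content that I would either cite or reprove from the period-matrix positive-definiteness.
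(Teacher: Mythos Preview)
Your reduction via Theorem~\ref{T:extreme-scalar} and Proposition~\ref{P:conv0} is exactly what the paper does: immediately after stating Theorem~\ref{T:extreme-scalar-Her}, the paper remarks that the ``added content'' beyond Theorem~\ref{T:extreme-scalar} is precisely the function-theoretic claim that $0\in\operatorname{conv}^{0}\{\bphi(x_{1}),\dots,\bphi(x_{n})\}$ holds if and only if $n=m+1$ with exactly one $x_{j}$ on each boundary component. The paper does \emph{not} supply a proof of this added content; it simply cites \cite[Theorem 1.3.17]{AHR}, \cite[Lemma 2.10]{DM}, \cite[Lemma 3.7]{Pick}.

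Your proposal therefore goes further than the paper. Your part~(a) gives a self-contained sign argument --- choosing $\phi_{k}$ to be the inward normal derivative of the harmonic measure $u_{k}$ and reading off the required one-point-per-component constraint from the Hopf/maximum-principle signs --- which the paper does not include at all. This is a clean and correct argument, and it is worth noting that the paper does quote a closely related fact later (in the proof of Theorem~\ref{T:extreme-Her} it invokes \cite[Lemma 1.3.1]{AHR} to the effect that any nonzero $\mu\in\cC(\partial\cR,1,\bphi)$ must charge every boundary component), which is essentially your sign argument in measure-theoretic dress. For part~(b) --- the affine independence of $\{\bphi(x_{0}),\dots,\bphi(x_{m})\}$ and the strict positivity of the resulting weights --- you correctly identify this as the genuine function-theoretic input and, like the paper, defer it to the period-matrix positive-definiteness in the cited sources. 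So: your outline is correct, matches the paper's framing, and in fact supplies more detail for~(a) than the paper does; for~(b) you and the paper are on equal footing, both citing rather than proving.
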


From the point of view of Theorem \ref{T:extreme-scalar}, the added 
content of Theorem \ref{T:extreme-scalar-Her} is as follows.  
For the case where $X = \partial \cR$ and $\bphi = \{\phi_{1}, \dots, 
\phi_{m}\}$ 
is constructed as in \eqref{basis}, then {\em the $n$-tuple of points $x_{1}, \dots, 
x_{n}$ in $\partial \cR$ is such that the zero vector 
$0 \in {\mathbb R}^{m}$ is an interior point of the convex hull of the set of vectors
$$
  \left\{ \begin{bmatrix} \phi_{1}(x_{1}) \\ \vdots \\ 
  \phi_{m}(x_{1}) \end{bmatrix}, \dots,  \begin{bmatrix} \phi_{1}(x_{n}) \\ \vdots \\ 
  \phi_{m}(x_{n}) \end{bmatrix} \right\} \subset {\mathbb R}^{m}
$$
if and only if $n = m+1$ and the $m+1$-tuple now indexed as $x_{0}, 
x_{1}, \dots, x_{m}$ consists of exactly one point from each 
boundary component $\partial_{j} \subset \partial \cR$ of $\cR$.}

\smallskip

Following \cite{AHR}, let us introduce the notation
\begin{equation}   \label{Rtorus}
  {\mathbb T}_{\cR} = \partial_{0} \times \cdots \times \partial_{m}
 \end{equation}
for the Cartesian product of the boundary components of $\cR$; we 
think of this as the ``$\cR$-torus'' since it plays the same role in 
integral representation formulas for Herglotz functions over $\cR$ as 
does the usual torus ${\mathbb T} = \partial {\mathbb D}$ for 
integral representation formulas for Herglotz functions over the unit 
disk ${\mathbb D}$. 
Theorem \ref{T:extreme-scalar-Her} provides a ${\mathbb 
T}_{\cR}$-parametrization of the extreme points of $\cH(\cR)_{1}$ 
as follows.  For a given $\bx = (x_{0}, x_{1}, \dots, x_{m}) \in {\mathbb T}_{\cR}$, 
we let $\mu_{\bx}$ be the extremal measure of $\cC(\partial \cR,  1,
\bphi)$ given by \eqref{mu-form-scalarHer}. Given 
any $\bx \in {\mathbb T}_{\cR}$, we let $f_{\bx}$ be the unique 
holomorphic function on $\cR$ determined by
\begin{equation}   \label{fbx}
  \R f_{\bx}(z) = \int_{\partial \cR} \cP_{z}(\zeta) {\tt 
  d}\mu_{\bx}(\zeta), \quad \I f_{\bx}(t_{0}) = 0.
\end{equation}
Then {\em the extreme points of normalized Herglotz functions 
$\cH(\cR)_{1}$ consist exactly of the functions $f_{\bx}$ with 
$\bx \in {\mathbb T}_{\cR}$.}

We next observe that the convex set $\cC( \partial \cR, 1, \bphi)$ is 
compact and convex in the space of real Borel measures 
$M(\partial \cR)$ over $\cR$ where the latter space carries the 
weak-$*$ topology.  As $M(\partial \cR)$ is the dual of $C_{{\mathbb 
R}}(\partial R)$ which is a separable Banach space, it follows that 
the unit ball in $M(\partial \cR)$ is metrizable. Hence the second 
statement in Theorem \ref{T:Choquet} applies. 
 Furthermore, when we use the 
correspondence between $\partial_{e} \cC(\partial R, 1, \phi)$
 and ${\mathbb T}_{\cR}$ to transport the weak-$*$ 
topology on  $\partial_{e} \cC(\partial R, 1, \bphi)$ to a 
topology on ${\mathbb T}_{\cR}$, one can check 
that the topology so obtained is just the Euclidean topology on 
${\mathbb T}_{\cR}$ inherited as a subset of ${\mathbb C}^{m+1}$. 
Thus, by Theorem \ref{T:Choquet} 
above, any measure $\mu \in \cC( \partial \cR, 1, \bphi)$ has an integral representation
$$
  \mu = \int_{{\mathbb T}_{\cR}} \mu_{\bx} {\tt d}\nu(\bx)
$$
where the integral is defined in the weak sense:
\begin{equation}   \label{weak-int}
  \int_{\partial \cR} \phi(\zeta) {\tt d}\mu(\zeta) = 
  \int_{{\mathbb T}_{\cR}} \left[ \int_{\partial \cR} \phi(\zeta) {\tt d} 
  \mu_{\bx}(\zeta) \right] {\tt d}\nu(\bx) \text{ for each } \phi \in 
  C_{{\mathbb R}}(\partial \cR).
\end{equation}
This leads to the following integral representation formula for 
functions $f$ in the normalized Herglotz class $\cH^{1}_{1}(\cR)$.

\begin{theorem}  \label{T:scalarHer-int-rep} (See \cite[Theorem 
    1.3.26]{AHR}.)
Given $f \in \cH(\cR)_{1}$, there is a probability measure $\nu$ 
on ${\mathbb T}_{\cR}$ so that
\begin{equation}   \label{scalarHer-int-rep}
    f(z) =  \int_{{\mathbb T}_{\cR}} f_{\bx}(z)\, {\tt d}\nu(\bx).
 \end{equation}
 \end{theorem}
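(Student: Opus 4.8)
The plan is to transfer the Choquet representation from the measure side to the function side through the affine correspondence set up above. First I would record that the map $\mu\mapsto f_{\mu}$, which sends $\mu\in\cC(\partial\cR,1,\bphi)$ to the unique $f_{\mu}\in\cH(\cR)_{1}$ with $\R f_{\mu}=\mu^{\wedge}$ (as in \eqref{measure-Poisson-rep}) and $\I f_{\mu}(t_{0})=0$, is an affine bijection of $\cC(\partial\cR,1,\bphi)$ onto $\cH(\cR)_{1}$, and indeed a homeomorphism once $\cC(\partial\cR,1,\bphi)$ is given its weak-$*$ topology and $\cH(\cR)_{1}$ the topology of $\operatorname{Hol}(\cR)$; this is the normal-families/Harnack argument already invoked before the statement. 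Consequently $\cH(\cR)_{1}$ is a compact, convex, metrizable subset of the locally convex space $\operatorname{Hol}(\cR)$, its set of extreme points is exactly $\{f_{\bx}\colon\bx\in{\mathbb T}_{\cR}\}$ by Theorem \ref{T:extreme-scalar-Her}, and $\bx\mapsto f_{\bx}$ is a homeomorphism of ${\mathbb T}_{\cR}$ onto $\partial_{e}\cH(\cR)_{1}$.

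Given $f\in\cH(\cR)_{1}$, I would then apply the metrizable case of Theorem \ref{T:Choquet} to $\cC=\cH(\cR)_{1}\subset E=\operatorname{Hol}(\cR)$: there is a probability Borel measure $\nu_{0}$ supported on $\partial_{e}\cH(\cR)_{1}$ which represents $f$, that is, $\ell(f)=\int\ell(g)\,{\tt d}\nu_{0}(g)$ for every continuous linear functional $\ell\in E^{*}$. Pushing $\nu_{0}$ forward along the inverse of the homeomorphism $\bx\mapsto f_{\bx}$ yields a probability measure $\nu$ on ${\mathbb T}_{\cR}$ with
$$
\ell(f)=\int_{{\mathbb T}_{\cR}}\ell(f_{\bx})\,{\tt d}\nu(\bx)\quad\text{for all }\ell\in E^{*}.
$$
Specializing $\ell$ to the evaluation functional $g\mapsto g(z)$, which is continuous on $\operatorname{Hol}(\cR)$ for each fixed $z\in\cR$, produces \eqref{scalarHer-int-rep}. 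Equivalently, and perhaps more concretely, one can stay on the measure side: Theorem \ref{T:Choquet} applied to $\cC(\partial\cR,1,\bphi)$ gives $\mu=\int_{{\mathbb T}_{\cR}}\mu_{\bx}\,{\tt d}\nu(\bx)$ in the weak sense \eqref{weak-int}, whence integrating the Poisson representation yields $\R f(z)=\int_{\partial\cR}\cP_{z}(\zeta)\,{\tt d}\mu(\zeta)=\int_{{\mathbb T}_{\cR}}\R f_{\bx}(z)\,{\tt d}\nu(\bx)$; since the harmonic conjugate normalized by $\I(\cdot)(t_{0})=0$ depends linearly and continuously on the boundary data, the same identity persists for $\I f$ and hence for $f$.

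The one point that needs a little care, and which I regard as the main (mild) obstacle, is the legitimacy of the pushforward: one must know that $\bx\mapsto f_{\bx}$ (equivalently $\bx\mapsto\mu_{\bx}$) is a genuine homeomorphism — in particular Borel bi-measurable — between ${\mathbb T}_{\cR}$ and $\partial_{e}\cH(\cR)_{1}$, so that $\nu$ is a bona fide Borel probability measure on ${\mathbb T}_{\cR}$ and the change of variables under the integral sign is valid. This rests on the identification, noted just before the statement, of the transported weak-$*$ topology on $\partial_{e}\cC(\partial\cR,1,\bphi)$ with the Euclidean topology on ${\mathbb T}_{\cR}$ inherited from ${\mathbb C}^{m+1}$. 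Everything else is a direct invocation of Choquet's theorem together with continuity of point evaluation on $\operatorname{Hol}(\cR)$.
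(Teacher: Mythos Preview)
Your proposal is correct and essentially coincides with the paper's argument: the paper applies Choquet theory on the measure side to obtain \eqref{weak-int}, then plugs $\phi=\cP_{z}$ into that identity to get $\R f(z)=\int_{{\mathbb T}_{\cR}}\R f_{\bx}(z)\,{\tt d}\nu(\bx)$ and finishes by uniqueness of the normalized harmonic conjugate --- exactly your second alternative. Your first alternative (applying Choquet directly to $\cH(\cR)_{1}\subset\operatorname{Hol}(\cR)$ and testing against point-evaluation functionals) is an equally valid minor variant, differing only in which side of the affine homeomorphism $\mu\mapsto f_{\mu}$ one invokes Theorem~\ref{T:Choquet}.
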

 
 \begin{proof}
We have already seen that any $f \in \cH(\cR)_{1}$ is associated 
with a  uniquely determined measure $\mu \in \cC( \partial \cR, 1, \bphi)$ so that  
$\R f = \mu^{\wedge}$ as 
in \eqref{measure-Poisson-rep}.
  Plugging this $\mu$ into 
\eqref{weak-int} (and setting $f(z) = \cP_{z}(\zeta)$) tells us that there is a probability measure $\nu$ 
on ${\mathbb T}_{\cR}$ so that
\begin{align*}
\R f(z) & = \int_{\partial \cR} \cP_{z}(\zeta) {\tt d}\mu(\zeta)  \\
& =\int_{{\mathbb T}_{\cR}} \left[ \int_{\partial \cR} \cP_{z}(\zeta) 
{\tt d}\mu_{\bx}(\zeta) \right]\,  {\tt d}\nu(\bx) \\   
& = \int_{{\mathbb T}_{\cR}} \R f_{\bx}(z)\, {\tt d}\nu(\bx).
\end{align*}
By the uniqueness of the harmonic conjugate normalized to have value 
$0$ at $t_{0}$, the formula \eqref{scalarHer-int-rep} now follows.
\end{proof}

\subsection{The matrix-valued Herglotz class of $\cR$} 
\label{S:matrixHerglotz}

We now wish to obtain results parallel to Theorem 
\ref{T:extreme-scalar-Her} and Theorem \ref{T:scalarHer-int-rep} for 
the normalized matrix-valued Herglotz class $\cH^{N}(\cR)_{I}$.  For 
the matrix-valued case the function theory is not as highly developed.
The implication is that the results which we do obtain are not as 
explicit as for the scalar-valued case.

By applying Theorem \ref{T:extreme-matrix} to the convex set 
$\cC(\partial \cR, N, \bphi)$ (where $\bphi = \{\phi_{1}, \dots, 
\phi_{m}\}$ as usual is as in \eqref{basis}), we get most of the 
following somewhat less  
explicit analogue of Theorem \ref{T:extreme-scalar-Her}.

\begin{theorem} \label{T:extreme-Her}  Consider the convex set 
    $\cC(\partial \cR, N, \bphi)$.
   Then  the
   $N \times N$ matrix-valued Borel measure $\mu$ on $\partial \cR$
   is extremal for $\cC(\partial \cR, N, \bphi)$ 
   if and only if there is a 
 natural number $n$ with $1 \le n \le (m+1)N^{2}$, $n$
	    distinct points $x_{1}, \dots, x_{n}$ in $\partial \cR$ 
	   together with $N \times N$ matrix weights $W_{1}, \dots, W_{n}$ such that
$ W_{j} \ge 0$ for each  $j$, 
$\sum_{j=1}^{n} W_{j} = I_{N}$, and $ \{\operatorname{Ran} W_{j} 
\colon j = 1, \dots, n\}$ is $\bphi(\bx)$-constrained weakly independent 
(see Definition \ref{D:inthull}) where we set 
$$
\bphi(\bx) = \left\{ \begin{bmatrix} \phi_{1}(x_{1}) \\ \vdots \\ 
\phi_{m}(x_{1}) \end{bmatrix}, \dots,  \begin{bmatrix} \phi_{1}(x_{n}) \\ \vdots \\ 
\phi_{m}(x_{n}) \end{bmatrix}  \right\} \subset {\mathbb R}^{m},
$$
so that $\mu$ has the representation 
\begin{equation}   \label{mu-rep-Her}
\mu = \sum_{j=1}^{n} W_{j} \delta_{x_{j}}.
\end{equation}

Conversely, if $\mu$ of the form \eqref{mu-rep-Her} is extremal for 
$\cC(\partial \cR, N, \bphi)$, then the additional property
\begin{equation}  \label{bdryWinv}
    \sum_{j \colon x_{j} \in \partial_{r}} W_{j} \text{ is invertible 
    for each } r =  0,1, \dots, m
 \end{equation}
 holds;  consequently the number of points $n$ in the support 
 $\operatorname{supp} \mu$ of $\mu$ in fact satisfies $m+1 \le n$ 
 with at least one point $x_{j}$ from the support of $\mu$ in each 
 connected component $\partial_{r}$ of $\partial \cR$.
	  \end{theorem}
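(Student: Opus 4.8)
The first half of the statement is just Theorem \ref{T:extreme-matrix} specialized to $X=\partial\cR$ with $\bphi$ chosen as in \eqref{basis}, so only the converse assertion \eqref{bdryWinv} needs a new argument, and the plan is to reduce it to the scalar-valued Herglotz theory assembled in Subsection \ref{S:scalarHerglotz}. In fact the argument will give the slightly stronger fact that $\mu(\partial_{r})$ is invertible for \emph{every} $\mu\in\cC(\partial\cR,N,\bphi)$ of the form \eqref{mu-rep-Her} and every $r$; extremality of $\mu$ is used only, via Theorem \ref{T:mu-form}, to put $\mu$ into that finitely supported form in the first place.

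So fix such a $\mu=\sum_{j=1}^{n}W_{j}\delta_{x_{j}}$ and a component $\partial_{r}$, and observe that $\mu(\partial_{r})=\sum_{j\colon x_{j}\in\partial_{r}}W_{j}$ is positive semidefinite, being the value of the positive matrix measure $\mu$ on the clopen Borel set $\partial_{r}\subset\partial\cR$. Arguing by contradiction, suppose $\mu(\partial_{r})$ is singular and pick a unit vector $v\in\ker\mu(\partial_{r})$. The scalar compression $\mu_{v}:=v^{*}\mu(\cdot)v$ is then a positive scalar Borel measure on $\partial\cR$ with $\mu_{v}(\partial_{r})=v^{*}\mu(\partial_{r})v=0$, so by monotonicity it is carried by the compact set $Y:=\partial\cR\setminus\partial_{r}$; and $\mu_{v}(\partial\cR)=v^{*}I_{N}v=1$, $\mu_{v}(\phi_{i})=v^{*}\mu(\phi_{i})v=0$ for $i=1,\dots,m$. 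Hence $\mu_{v}$ is a nonzero element of $\cC(Y,1,\bphi|_{Y})$, and since $Y$ is closed (hence compact) in $\partial\cR$ and $\bphi|_{Y}$ is continuous, the scalar results of the earlier sections apply to the data $(Y,1,\bphi|_{Y})$.

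The contradiction now comes from the scalar function theory. By Corollary \ref{C:extreme-scalar}, nonemptiness of $\cC(Y,1,\bphi|_{Y})$ yields distinct points $y_{1},\dots,y_{k}\in Y\subset\partial\cR$ with $1\le k\le m+1$ such that $0\in{\mathbb R}^{m}$ is an interior point of the convex hull of $\{\bphi(y_{1}),\dots,\bphi(y_{k})\}$. But the characterization stated just after Theorem \ref{T:extreme-scalar-Her} asserts that, for points of $\partial\cR$, this can only happen when $k=m+1$ and exactly one $y_{j}$ lies in each boundary component $\partial_{0},\dots,\partial_{m}$ — which is absurd, since no $y_{j}$ lies in $\partial_{r}$. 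Therefore $\mu(\partial_{r})$ is invertible, which is \eqref{bdryWinv}. (Alternatively one can skip Corollary \ref{C:extreme-scalar} and use Choquet's theorem directly: write $\mu_{v}=\int_{{\mathbb T}_{\cR}}\mu_{\bx}\,{\tt d}\nu(\bx)$ for a probability measure $\nu$ on ${\mathbb T}_{\cR}$ as in Theorem \ref{T:scalarHer-int-rep}; then $\mu_{v}(\partial_{r})=\int_{{\mathbb T}_{\cR}}w^{\bx}_{r}\,{\tt d}\nu(\bx)>0$ since every weight $w^{\bx}_{r}$ is strictly positive, again contradicting $\mu_{v}(\partial_{r})=0$.)

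The final clause is then immediate: \eqref{bdryWinv} forces $\sum_{j\colon x_{j}\in\partial_{r}}W_{j}\ne0$ for each $r=0,1,\dots,m$, so the support $\{x_{1},\dots,x_{n}\}$ of $\mu$ meets each of the $m+1$ pairwise disjoint components $\partial_{0},\dots,\partial_{m}$, whence $n\ge m+1$. I do not expect a genuine obstacle here; the only steps deserving care are checking that the compressed data $(Y,1,\bphi|_{Y})$ really meets the standing hypotheses of the scalar results, and observing that the interior-point characterization following Theorem \ref{T:extreme-scalar-Her} concerns points of $\partial\cR$ and hence applies regardless of which closed subset of $\partial\cR$ they are drawn from.
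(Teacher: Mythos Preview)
Your argument is correct and follows the same overall strategy as the paper: compress $\mu$ by a unit vector $v$ to get a scalar measure $\mu_v\in\cC(\partial\cR,1,\bphi)$, and then invoke scalar Herglotz theory to see that $\mu_v(\partial_r)>0$ for every $r$. The only difference is in how that last step is justified. The paper simply quotes \cite[Lemma~1.3.1]{AHR}, which says directly that any nonzero positive measure on $\partial\cR$ annihilating $\phi_1,\dots,\phi_m$ must charge every boundary component. You instead rederive this fact internally, either via Corollary~\ref{C:extreme-scalar} together with the interior-point characterization stated after Theorem~\ref{T:extreme-scalar-Her}, or via the Choquet decomposition $\mu_v=\int_{\mathbb{T}_\cR}\mu_{\bx}\,{\tt d}\nu(\bx)$ and the strict positivity of the weights $w^{\bx}_r$. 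Both of your routes are valid and have the advantage of being self-contained within the paper; the paper's route is shorter but leans on an external reference.
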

	  
	  \begin{proof}  What is added here going beyond the 
	      structure given by Theorem \ref{T:extreme-matrix} for 
	      the general (non function-theoretic) case is the 
	      information on the converse direction given by 
	      \eqref{bdryWinv}.  To see this, we note that for any 
	      unit vector $\bu \in {\mathbb C}^{N}$, the 
	      scalar-valued measure $\mu_{\bu}(\Delta) : =  \bu^{*} 
	      \mu(\Delta) \bu$ is in the convex set of scalar measures 
	      $\cC(\partial \cR, 1, \phi)$.  We now quote the result of Lemma 
	      1.3.1 in \cite{AHR}: {\em if $\mu^{(1)}$ is 
	      a nonzero scalar positive measure on $\partial \cR$ 
	      such that $\int_{\partial \cR} \phi_{i} {\tt d}\mu^{(1)} = 0$ 
	      for $i=1, \dots, m$} (so $\mu^{(1) \wedge}$ has a 
	      single-valued harmonic conjugate), {\em then 
	      $\mu^{(1)}(\partial_{r}) > 0$ for each $r=0,1, \dots, m$}.
	      Consequently, if $\sum_{j \colon x_{j} \in 
	      \partial_{r}} W_{j} = \mu(\partial_{r})$ is singular 
	      for some $r$, then there is a unit vector $\bu$ so that 
	      $\mu_{\bu}(\partial_{r}) = u^{*} \mu(\partial_{r}) u = 
	      0$.  From \cite[Lemma 1.3.1]{AHR} we conclude that 
	      $\bu^{*} \mu \bu$ is the zero measure, in contradiction 
	      with $\mu(\partial \cR) = I_{N}$.
	\end{proof}

\begin{remark}  \label{R:lessexplicit}
   {\em  We note that what is lacking in Theorem \ref{T:extreme-Her} (as 
    compared to Theorem \ref{T:extreme-scalar-Her}) is an explicit 
    characterization as to which natural numbers $n$ between $m+1$ 
    and $(m+1)N^{2}$ and which associated $n$-tuples of points $\bx = 
    (x_{1}, \dots, x_{n})$ actually arise in a representation 
    \eqref{mu-rep-Her} for an extreme point of $\cC(\partial \cR, N, 
    \bphi)$, beyond the information that $\bx$ must include at least 
    one point from each boundary component $\partial_{1}, \dots, 
    \partial_{m}$.   
    Also it is not clear to what extent the $n$-tuple of 
    points $\bx$ determines the associated $n$-tuple of matrix 
    weights $W_{1}, \dots, W_{n}$; note that the classes of examples 
    from Theorems \ref{T:mu-form1} and \ref{T:mu-form2} show that it 
    is certainly not the case that the support $\bx = \{x_{1}, \dots, 
    x_{n}\}$ uniquely determines the associated set of matrix 
    weights $\bw = \{W_{1}, \dots, W_{n}\}$.
    } \end{remark}
    
Despite the lack of explicitness in the characterization of the 
extreme points of $\cC(\partial \cR, N, \bphi)$ as explained in 
Remark \ref{R:lessexplicit},
 we can still pursue the matrix analogue of much of the analysis done for 
 the scalar-valued normalized Herglotz class as follows.
 
 It is straightforward to see that positive $N \times N$ matrix-valued 
 harmonic functions $H$ are given via a Poisson representation
 $$
  H(z) = \mu^{\wedge}(z) : = \int_{\partial \cR} \cP_{z}(\zeta) {\tt 
  d}\mu(\zeta)
 $$
 where now $\mu$ is a complex Hermitian positive $N \times N$ 
 matrix-valued measure on $\partial \cR$.  Moreover, the harmonic 
 matrix-valued function $\mu^{\wedge}(z)$ has a single-valued 
 matrix-valued harmonic conjugate if and only if the matrix measure 
 $\mu$ satisfies the orthogonality conditions \eqref{orthogonality}
 (with respect to the {\em scalar-valued} functions $\bphi: = \{ 
 \phi_{1}, \dots, \phi_{m}\}$, and the normalization condition that 
 $\mu^{\wedge}(t_{0}) = I_{N}$ translates to the condition on $\mu$ 
 that $\mu(X) = I_{N}$. By continuing an analysis parallel to what was 
 done above for the scalar-valued case, we arrive at the following:
 {\em Given $\mu \in \cC(\partial \cR, N, \bphi)$, there is a unique 
 $F_{\mu} \in \cH^{N}(\cR)_{I}$ so that 
 \begin{equation}   \label{Herrep}
   \R F_{\mu}(z) = \int_{\partial \cR}\cP_{z}(\zeta){\tt 
   d}\mu(\zeta).
  \end{equation}
  Conversely, any $F \in \cH^{N}(\cR)_{I}$ arises in this way 
  from a $\mu \in \cC(\partial \cR, N, \bphi)$.  Moreover, the 
  function $F_{\mu}$ is extremal in $\cH^{N}(\cR)_{I}$ if and 
  only if $\mu$ is extremal in $\cC( \partial \cR, N, \bphi)$.}

 The set of complex Hermitian $N \times N$ matrix-valued measures $[M(X)^{N \times 
 N}]_{h}$ is the dual space of the separable real Banach space 
 $[C(\partial \cR)^{N \times N}]_{h}$ of complex Hermitian $N\times N$ 
 matrix-valued continuous functions on $\partial \cR$, and hence the 
 unit ball is metrizable and the second statement in Theorem 
 \ref{T:Choquet} applies.  The set of extreme points 
 $\partial_{e}\cC(\partial \cR,N, \bphi)$ is a 
 Borel subset of $[M(X)^{N \times N}]_{h}$ and we have a (admittedly 
 somewhat implicit) parametrization from Theorem \ref{T:extreme-Her}.
 In detail, let us denote by ${\mathbb T}^{N}_{\cR}$ (the matrix 
 $\cR$-torus) the set 
 \begin{align*}
  {\mathbb T}^{N}_{\cR} = &  \{ (\bx, \bw) \colon \bx = (x_{1}, \dots, 
  x_{n}) \subset \partial \cR \text{ and } \bw = (W_{1}, \dots, 
  W_{n}) \subset [{\mathbb C}^{N \times N}]_{+} \\
  & \quad \text{ are as in Theorem \ref{T:extreme-Her}} \}.
 \end{align*}
 Given $(\bx, \bw) \in {\mathbb T}^{N}_{\cR}$, there is an associated 
 extremal measure $\mu_{\bx, \bw} \in \partial_{e} \cC( \partial 
 \cR, N, \bphi)$ given by \eqref{mu-rep-Her} in Theorem 
 \ref{T:extreme-Her}, and all extremal measures $\mu \in \partial_{e} 
 \cC(\partial \cR, N, \bphi)$ are of the form $\mu_{\bx, \bw}$ for 
 some $(\bx, \bw) \in {\mathbb T}^{N}_{\cR}$.  We topologize ${\mathbb T}^{N}_{\cR}$ by 
 transporting the weak-$*$ topology on the the associated set of 
 measures $\mu_{\bx, \bw}$ for $(\bx, \bw) \in {\mathbb T}^{N}_{\cR}$.
 Then by Theorem \ref{T:Choquet}, given any $\mu \in 
 \cC(\partial \cR,N, \bphi)$, there is a Borel probability measure $\nu$ on 
 ${\mathbb T}^{N}_{\cR}$ so that
 $$
   \mu = \int_{{\mathbb T}^{N}_{\cR}} \mu_{\bx, \bw}\, {\tt 
   d}\nu(\bx, \bw)
 $$
 with the integral interpreted in the weak sense:
 \begin{equation}  \label{matrix-weak-int}
\operatorname{tr} \left( \int_{\partial \cR} \Phi(\zeta) \, {\tt 
d}\mu(\zeta) \right) = 
\int_{{\mathbb T}^{N}_{\cR}} \operatorname{tr}
 \left( \int_{\partial \cR} \Phi(\zeta) \, {\tt 
d}\mu_{\bx, \bw}(\zeta) \right) \, {\tt d}\nu(\bx, \bw)
\end{equation}
for any $\Phi \in [C(\partial \cR)^{N \times N}]_{h}$.

Since extreme points of $\cH^{N}(\cR)_{I}$ correspond to extreme 
points of $\cC(\partial \cR, N, \bphi)$ in accordance with the formula
\eqref{Herrep}, we see that the extreme points of the normalized 
Herglotz class $\cH^{N}(\cR)_{I}$ are exactly the functions 
$F_{\bx, \bw}$ determined by
\begin{equation}   \label{extreme-Her}
 \R F_{\bx, \bw}(z) = \int_{\partial \cR} \cP_{z}(\zeta)\, {\tt 
 d}\mu_{\bx, \bw}(\zeta), \quad \I  F_{\bx, \bw}(t_{0}) = 0.
\end{equation}

We are now led to the matrix-valued analogue of Theorem 
\ref{T:scalarHer-int-rep}.

\begin{theorem} \label{T:Her-int-rep}
    Given $F \in \cH^{N}(\cR)_{I}$, there is a probability 
    measure $\nu$ on ${\mathbb T}^{N}_{\cR}$ so that
  \begin{equation}   \label{Her-int-rep}
      F(z) = \int_{{\mathbb T}^{N}_{\cR}} F_{\bx, \bw}(z) \, {\tt 
      d}\nu(\bx, \bw).
   \end{equation}
    \end{theorem}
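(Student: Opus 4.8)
The plan is to follow the proof of Theorem~\ref{T:scalarHer-int-rep} line for line, now with the matrix apparatus assembled above. Recall first that $\mu \mapsto F_{\mu}$ from \eqref{Herrep} is an affine homeomorphism of $\cC(\partial \cR, N, \bphi)$, with its weak-$*$ topology, onto $\cH^{N}(\cR)_{I}\subset\operatorname{Hol}(\cR)$, with the topology of uniform convergence on compacta; it carries extreme points to extreme points and $\mu_{\bx,\bw}$ to $F_{\bx,\bw}$, so that $\partial_{e}\cH^{N}(\cR)_{I}=\{F_{\bx,\bw}\colon(\bx,\bw)\in{\mathbb T}^{N}_{\cR}\}$. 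Given $F\in\cH^{N}(\cR)_{I}$, let $\mu\in\cC(\partial\cR,N,\bphi)$ be the measure with $\R F=\mu^{\wedge}$ furnished by \eqref{Herrep}, and let $\nu$ be the probability measure on ${\mathbb T}^{N}_{\cR}$ produced by Theorem~\ref{T:Choquet}; its metrizable clause applies since $[M(\partial\cR)^{N\times N}]_{h}$ has separable predual $[C(\partial\cR)^{N\times N}]_{h}$, and the output is exactly the $\nu$ satisfying \eqref{matrix-weak-int}.

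Next I would convert \eqref{matrix-weak-int} into a pointwise matrix identity on real parts. Fix $z\in\cR$ and an arbitrary $A\in[{\mathbb C}^{N\times N}]_{h}$, and feed the test function $\Phi(\zeta)=\cP_{z}(\zeta)A\in[C(\partial\cR)^{N\times N}]_{h}$ into \eqref{matrix-weak-int}; since $\operatorname{tr}\bigl(A\!\int_{\partial\cR}\cP_{z}\,{\tt d}\mu\bigr)=\operatorname{tr}\bigl(A\,\R F(z)\bigr)$ and likewise with $\mu_{\bx,\bw}$ and $\R F_{\bx,\bw}(z)$, and since $A$ runs over all Hermitian matrices, this yields the matrix identity $\R F(z)=\int_{{\mathbb T}^{N}_{\cR}}\R F_{\bx,\bw}(z)\,{\tt d}\nu(\bx,\bw)$ for every $z\in\cR$. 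For this I also need that $(\bx,\bw)\mapsto F_{\bx,\bw}(z)$ is Borel (immediate from continuity of $(\bx,\bw)\mapsto\mu_{\bx,\bw}$ and of $\mu\mapsto F_{\mu}$) and that these functions are uniformly bounded on each compact $K\subset\cR$ (because $\cH^{N}(\cR)_{I}$ is compact in $\operatorname{Hol}(\cR)$), so that $G(z):=\int_{{\mathbb T}^{N}_{\cR}}F_{\bx,\bw}(z)\,{\tt d}\nu(\bx,\bw)$ is a well-defined holomorphic function on $\cR$ and $\R G=\R F$.

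Finally I would upgrade the equality of real parts to equality of the functions. On the connected set $\cR$, $F-G$ is holomorphic with zero real part, hence a purely imaginary constant matrix; evaluating at $t_{0}$ and using $F(t_{0})=I_{N}$ together with $F_{\bx,\bw}(t_{0})=I_{N}$ for all $(\bx,\bw)$ (because $\cP_{t_{0}}\equiv 1$ gives $\R F_{\bx,\bw}(t_{0})=\mu_{\bx,\bw}(\partial\cR)=I_{N}$ while $\I F_{\bx,\bw}(t_{0})=0$) forces $G(t_{0})=I_{N}=F(t_{0})$, so $F=G$, which is \eqref{Her-int-rep}. (Alternatively one could apply Theorem~\ref{T:Choquet} directly to the compact, metrizable convex set $\cH^{N}(\cR)_{I}$ inside the Fr\'echet space $\operatorname{Hol}(\cR)$ and evaluate the representing measure against the continuous linear functionals $G\mapsto G_{ij}(z)$, bypassing the harmonic-conjugate step.)

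The step I expect to be the actual obstacle is not any of the above but rather the bookkeeping implicit in the paragraph defining ${\mathbb T}^{N}_{\cR}$: one must check that ${\mathbb T}^{N}_{\cR}$ is a standard Borel space and that $(\bx,\bw)\mapsto\mu_{\bx,\bw}$ is a Borel isomorphism onto $\partial_{e}\cC(\partial\cR,N,\bphi)$, so that the Choquet measure supported on $\partial_{e}\cC$ pulls back to a genuine Borel probability measure $\nu$ on ${\mathbb T}^{N}_{\cR}$. This requires handling the varying number $n$ of support points (ranging over $m+1\le n\le(m+1)N^{2}$), the ordering of the pairs $(x_{j},W_{j})$, the exclusion of zero weights, and the finitely many combinatorial patterns for how the $x_{j}$ are distributed among $\partial_{0},\dots,\partial_{m}$; once ${\mathbb T}^{N}_{\cR}$ is decomposed accordingly, the remainder is a routine transcription of the scalar argument.
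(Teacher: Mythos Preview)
Your argument is correct and follows essentially the same route as the paper: feed $\Phi(\zeta)=\cP_{z}(\zeta)A$ with $A\in[\mathbb{C}^{N\times N}]_{h}$ into \eqref{matrix-weak-int} to get $\R F(z)=\int_{{\mathbb T}^{N}_{\cR}}\R F_{\bx,\bw}(z)\,{\tt d}\nu(\bx,\bw)$, then invoke uniqueness of the harmonic conjugate vanishing at $t_{0}$ to pass to \eqref{Her-int-rep}. Your additional care about measurability, boundedness, and the Borel structure on ${\mathbb T}^{N}_{\cR}$ goes beyond what the paper records explicitly, but the core proof is the same.
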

    
    \begin{proof}
	We have noted that any $F \in \cH^{N}(\cR)_{I}$ is 
	associated with a measure $\mu \in \cC(\partial \cR, N,
	\bphi)$ as in \eqref{Herrep}.  For $X$ an arbitrary complex 
	Hermitian $N \times N$ matrix, we use the representation 
	\eqref{matrix-weak-int} with $\Phi(\zeta) = X \cP_{z}(\zeta)$ 
	to get
 \begin{align*}
     \operatorname{tr} ( X \R F(z)) & = 
     \operatorname{tr} \left( \int_{\partial \cR} X \cP_{z}(\zeta) 
     {\tt d}\mu(\zeta) \right) \\
     & = \int_{{\mathbb T}^{N}_{\cR}} \operatorname{tr} \left( 
     \int_{\partial \cR} X \cP_{z}(\zeta) \, {\tt d}\mu_{\bx, 
     \bw}(\zeta) \right) d\nu(\bx, \bw)  \\
      & = \int_{{\mathbb T}^{N}_{\cR}}  \operatorname{tr} \left( X \R F_{\bx, 
     \bw}(z) \right) \, {\tt d}\nu(\bx, \bw)  \\
     & = \operatorname{tr} \left( X \int_{{\mathbb T}^{N}_{\cR}} 
    \R  F_{\bx, \bw}(z)\, {\tt d}\nu(\bx, \bw) \right).
     \end{align*}
    Since $X \in [{\mathbb C}^{N \times N}]_{h}$ is arbitrary, we 
    conclude that
    $$
      \R F(z) = \int_{{\mathbb T}^{N}_{\cR}} \R F_{\bx,\bw}(z) \, {\tt 
      d}\nu(\bx, \bw).
     $$
   By uniqueness of harmonic conjugate with value $0$ at $t_{0}$, the 
    representation \eqref{Her-int-rep} now follows.
	\end{proof}

	\section{The Schur class over a finitely connected planar 
	domain}  \label{S:Schur}
	
	We define the (strict) {\em $N\times N$-matrix Schur class} over 
	$\cR$,  denoted by $\cS^{N}(\cR)$, to be the class of all 
	holomorphic functions on $\cR$ with values equal to $N \times 
	N$ matrices such that $\| S(z) \| < 1$ for $z \in \cR$.  The 
	normalized strict Schur class $\cS^{N}(\cR)_{0}$ consists of such functions $S$ such 
	that in addition $S(t_{0}) = 0$. A consequence of the Schwarz 
	lemma is that a holomorphic function $S$ with (not necessarily 
	strict) contraction values and with $S(t_{0}) = 0$ 
	necessarily has strictly contractive values on all of $\cR$.
	
	The classes 
	$\cH^{N}(\cR)_{I}$ and $\cS^{N}(\cR)_{0}$ correspond via a 
	linear-fractional change of variable, as summarized in the 
	following Proposition. We include the elementary proof since we shall 
	make use of the formulas in subsequent proofs.

	\begin{proposition}   \label{P:SchurHer}
	    The normalized Schur class $\cS^{N}(\cR)_{0}$ and 
	    the normalized Herglotz class $\cH^{N}(\cR)_{I}$ are 
	    related according to the following linear-fractional 
	    change-of-variable formulas:
	    \begin{align}
		& S \in \cS^{N}(\cR)_{0} \Leftrightarrow F:= (I - S)^{-1}(I + S) 
		 \in \cH^{N}(\cR)_{I}, \label{StoF} \\
	& F \in \cH^{N}(\cR)_{I} \Leftrightarrow S: = (F + I)^{-1} (F 
	- I) \in \cS^{N}(\cR)_{0}.  \label{FtoS}
	\end{align}
	Moreover, the transformations in \eqref{StoF} and 
	\eqref{FtoS} are inverse to each other.
	\end{proposition}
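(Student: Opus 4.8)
The plan is to establish both equivalences and the inversion claim by direct algebraic manipulation of the two linear-fractional expressions, relying on only two elementary facts: (i) if $T$ is a strict contraction then $I-T$ is invertible; and (ii) if $\R F \ge 0$ then $F+I$ is invertible, since $(F+I)v=0$ would force $0 = \R\langle (F+I)v,v\rangle = \langle (\R F)v,v\rangle + \|v\|^{2} \ge \|v\|^{2}$. In each case the relevant inverse is again holomorphic, so every formula in the statement produces a holomorphic $N\times N$ matrix function on $\cR$.

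For the passage $S\mapsto F$, let $S\in\cS^{N}(\cR)_{0}$, so $\|S(z)\|<1$ and $I-S$ is invertible on $\cR$. Multiplying $F+F^{*}$ on the left by $I-S$ and on the right by $I-S^{*}$ gives
\begin{equation*}
(I-S)(F+F^{*})(I-S^{*}) = (I+S)(I-S^{*}) + (I-S)(I+S^{*}) = 2(I-SS^{*}),
\end{equation*}
whence $\R F = (I-S)^{-1}(I-SS^{*})(I-S^{*})^{-1} > 0$ on $\cR$; also $F(t_{0}) = (I-S(t_{0}))^{-1}(I+S(t_{0})) = I$, so $F\in\cH^{N}(\cR)_{I}$. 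For the passage $F\mapsto S$, let $F\in\cH^{N}(\cR)_{I}$; then $F+I$ is invertible by (ii) and $S=(F+I)^{-1}(F-I)$ is holomorphic, and the mirror-image identity
\begin{equation*}
(F+I)(I-SS^{*})(F^{*}+I) = (F+I)(F^{*}+I) - (F-I)(F^{*}-I) = 2(F+F^{*}) = 4\,\R F \ge 0
\end{equation*}
yields $\|S(z)\|\le 1$ on $\cR$, while $S(t_{0}) = (F(t_{0})+I)^{-1}(F(t_{0})-I) = (2I)^{-1}\cdot 0 = 0$. The Schwarz-lemma observation recorded just before the Proposition then promotes $\|S(z)\|\le 1$ to the strict inequality, so $S\in\cS^{N}(\cR)_{0}$.

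It remains to verify that the two assignments invert one another, which simultaneously delivers the remaining two implications of the Proposition. Starting from $F=(I-S)^{-1}(I+S)$, the identities $F+I = (I-S)^{-1}\big[(I+S)+(I-S)\big] = 2(I-S)^{-1}$ and $F-I = 2(I-S)^{-1}S$ give $(F+I)^{-1}(F-I)=S$; symmetrically, from $S=(F+I)^{-1}(F-I)$ one obtains $I-S = 2(F+I)^{-1}$ and $I+S = 2(F+I)^{-1}F$, hence $(I-S)^{-1}(I+S)=F$. Consequently, if $F:=(I-S)^{-1}(I+S)$ lies in $\cH^{N}(\cR)_{I}$ then $S=(F+I)^{-1}(F-I)\in\cS^{N}(\cR)_{0}$ by the $F\mapsto S$ step, and likewise the hypothesis of the $\Leftarrow$ direction in \eqref{FtoS} reduces via these identities to the $S\mapsto F$ step.

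The step deserving the most care is not the algebra but making sure the output lands in the correct subclass — the \emph{strict} Schur class and the \emph{normalized} Herglotz class — rather than merely the closed matrix ball or the class with $\R F\ge 0$; the normalizations at $t_{0}$ are read directly off the formulas, and the strictness of $S$ is exactly the cited consequence of the Schwarz lemma. Beyond that, the proof is entirely routine.
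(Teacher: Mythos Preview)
Your proof is correct and follows essentially the same approach as the paper: both establish the two directions by the same pair of algebraic identities relating $F+F^{*}$ to $I-SS^{*}$, check the normalizations at $t_{0}$, and leave (or in your case carry out) the routine verification that the two formulas are mutual inverses. The only notable difference is that the paper records the off-diagonal versions of these identities, namely $F(z)+F(w)^{*} = 2(I-S(z))^{-1}(I-S(z)S(w)^{*})(I-S(w)^{*})^{-1}$ and its companion, because those two-variable formulas are reused in the proof of Theorem~\ref{T:Schur-def}; your diagonal computation suffices for the Proposition itself.
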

	
	\begin{proof} If $S \in \cS^{N}(\cR)_{0}$ and $F$ is defined 
	    as in \eqref{StoF}, then clearly $F(t_{0}) = I_{N}$ and
	  \begin{align}
	  &   F(z) + F(w)^{*}  = (I - S(z))^{-1}(I + S(z)) + (I 
	     +S(w)^{*}) (I - S(w)^{*})^{-1}  \notag \\
	     & \quad = (I - S(z))^{-1} [ (I + S(z)) (I - S(w)^{*}) + (I - 
	     S(z)) (I + S(w)^{*}) ] (I - S(w))^{-1} \notag \\
	     & \quad = 2 (I - S(z))^{-1} (I - S(z) S(w)^{*} ) (I - 
	     S(w)^{*})^{-1}
	     \label{F-defect}
	     \end{align}
	  from which we see that $\R F(z)$ is positive for $z \in 
	  \cR$ so $F \in \cH^{N}(\cR)_{I}$.
	  
	  Similarly, if $F \in \cH^{N}(\cR)_{I}$ and $S$ is defined 
	  as in \eqref{FtoS}, then clearly $S(t_{0}) = 0$ and
	  \begin{align}
	    &  I - S(z) S(w)^{*}  = I - (F(z) + I)^{-1} (F(z) - I) 
	      (F(w)^{*} - I) (F(w)^{*} + I)^{-1} \notag \\
	   & \quad  = (F(z) + I)^{-1} [ (F(z) + I) (F(w)^{*} + I) - (F(z) 
	   -I) (F(w)^{*} - I) ] (F(w)^{*} + I)^{-1} \notag \\
	   & \quad = 2 (F(z) + I)^{-1} [ F(z) + F(w)^{*} ] (F(w)^{*} + I)^{-1}
	   \label{S-defect}
	   \end{align}
	   from which we see that $S(z)$ is constrictive for $z \in 
	   \cR$ and hence $S \in \cS^{N}(\cR)_{0}$.
	   
	   Another formula which will prove useful later is
	   \begin{equation} \label{useful}
	       (F(z) + I)^{-1} = \frac{1}{2} (I - S(z))
	    \end{equation}
	    whenever $F$ and $S$ are related as in \eqref{StoF}.
	   
	   We leave to the reader the verification of the fact that the formulas 
	   \eqref{StoF} and \eqref{FtoS} are inverse to each other.
	     \end{proof}

	More generally, if $S$ is not in the normalized Schur class 
	but is in the strict (unnormalized) Schur class, we can apply 
	a matrix linear-fractional map mapping the unit ball of $N 
	\times N$ matrices to itself to obtain a new $S'$ which is in 
	the normalized Schur class.  Indeed, given any strictly 
	contractive $N \times N$ matrix $W$, the matrix linear 
	fractional map given by
	\begin{equation}   \label{LW}
	L_{W} \colon Z \mapsto [AZ + B] [CZ + D]^{-1}
	\end{equation}
	where 
	\begin{equation}   \label{ABCD}
	\begin{bmatrix} A & B \\ C & D \end{bmatrix} = 
	    \begin{bmatrix} (D_{W^{*}})^{-1} & -(D_{W^{*}})^{-1} W \\
		-W^{*} (D_{W^{*}})^{-1} & (D_{W})^{-1}  \end{bmatrix}
	\end{equation}
	where $D_{W} = (I - W^{*} W)^{\frac{1}{2}}$ and $D_{W^{*}} = 
	( I - W W^{*})^{\frac{1}{2}}$ denote the invertible defect 
	operators of $W$ and $W^{*}$, maps the open unit ball $\cB 
	{\mathbb C}^{N \times N} =  \{ Z \in {\mathbb C}^{N \times N} 
	\colon \| Z \| < 1\}$ biholomorphically to itself and maps 
	the given strict contraction matrix $W$ to $0$ (these 
	constructions go back at least to the paper of Phillips \cite{Phillips}):
	$$
	L_{W}[W] = 0.
	$$
        One can check that the
	linear-fractional map $(L_{W})^{-1}$ mapping $0$ back to $W$ 
	if given by
	$$   
	(L_{W})^{-1} \colon Z' \mapsto (A - Z' C)^{-1} (B - Z'D)
	$$
	with $A,B,C,D$ as in \eqref{ABCD}, or explicitly
	   \begin{align}  
		L_{W}^{-1} \colon Z' & \mapsto 
		\left( (D_{W^{*}})^{-1} + Z' W^{*} (D_{W^{*}})^{-1} 
		\right)^{-1} \left(Z' (D_{W})^{-1} + (D_{W^{*}})^{-1} 
		W \right) \notag  \\
		& = D_{W^{*}} (I + Z' W^{*})^{-1} (Z' + W) (D_{W})^{-1}
		 \label{LWinv}
		 \end{align}
	(where we made use of the intertwining relation
	$(D_{W^{*}})^{-1}W = W (D_{W})^{-1}$).  This formula will 
	prove useful below.

	Notice that in the scalar case with $w$ a point in the unit 
	disk, the matrix linear-fractional map $L_{W}$ simplifies to 
	the familiar M\"obius transformation
	$$
	L_{w} \colon z \mapsto (z-w)(1 - z \overline{w})^{-1}
	$$
	mapping the unit disk onto itself with the point $w \in 
	{\mathbb D}$ mapping to $0$.

	With these observations in hand, the following is immediate.
	
	\begin{proposition}   \label{P:normalize}
	    If the matrix function $S$ is in the strict Schur class 
	    $\cS^{N}(\cR)$,
	    then
	    $\widetilde S$ given by
	    \begin{equation}   \label{StotildeS}
	      \widetilde S(z) = L_{S(0)}[S(z)] \text{ with } L_{S(0)} 
	      \text{ given as in  \eqref{LW} and \eqref{ABCD}}
	     \end{equation}
	  is in the normalized Schur class $\cS^{N}(\cR)_{0}$.
	  \end{proposition}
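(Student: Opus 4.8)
The plan is to recognize $\widetilde S$ as simply the composition $L_{W}\circ S$, where $W := S(0)$ is a strict contraction (since $S$ takes strictly contractive values throughout $\cR$) and $L_{W}$ is the biholomorphic automorphism of the open matrix ball $\cB {\mathbb C}^{N\times N}$ displayed in \eqref{LW}--\eqref{ABCD} that carries $W$ to $0$. All three properties required of $\widetilde S$ — holomorphy on $\cR$, strictly contractive values, and vanishing at the base point — then read off directly from the properties of $L_{W}$ already recorded in the text preceding the Proposition.

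First I would check that $\widetilde S = L_{W}\circ S$ is a well-defined single-valued holomorphic function on $\cR$. As $S$ is holomorphic on $\cR$ with values in $\cB {\mathbb C}^{N\times N}$, it suffices to verify that the denominator $CZ+D$ from \eqref{LW} is invertible for every $Z$ in the open matrix ball. Factoring $(D_{W})^{-1}$ out on the left and using the intertwining relation $D_{W}W^{*}=W^{*}D_{W^{*}}$ (equivalently $(D_{W^{*}})^{-1}W=W(D_{W})^{-1}$, already exploited in the derivation of \eqref{LWinv}), one finds $CZ+D=(D_{W})^{-1}(I-W^{*}Z)$, and $\|W^{*}Z\|\le\|W\|\,\|Z\|<1$ because $W$ is a strict contraction; hence $I-W^{*}Z$, and thus $CZ+D$, is invertible, and $\widetilde S(z)=(AS(z)+B)(CS(z)+D)^{-1}$ depends holomorphically on $z\in\cR$. (Alternatively, one may just quote that $L_{W}$ is biholomorphic on all of $\cB {\mathbb C}^{N\times N}$, whence its composition with the holomorphic ball-valued map $S$ is automatically holomorphic.)

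Since $L_{W}$ maps $\cB {\mathbb C}^{N\times N}$ into itself and $S(z)\in\cB {\mathbb C}^{N\times N}$ for each $z\in\cR$, we get $\widetilde S(z)=L_{W}(S(z))\in\cB {\mathbb C}^{N\times N}$, i.e.\ $\|\widetilde S(z)\|<1$ throughout $\cR$, so $\widetilde S\in\cS^{N}(\cR)$; and the defining property $L_{W}[W]=0$ gives $\widetilde S(0)=L_{W}(S(0))=L_{W}(W)=0$, so $\widetilde S$ is in fact normalized, $\widetilde S\in\cS^{N}(\cR)_{0}$. I expect no genuine obstacle here: the substance was already carried out in establishing (following Phillips \cite{Phillips}) that $L_{W}$ is a biholomorphic self-map of the matrix ball sending $W$ to $0$, so the Proposition really is immediate from that. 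The one point deserving a line of justification is the invertibility of $CZ+D$ along the range of $S$, which is the elementary norm estimate $\|W^{*}Z\|<1$ above; the remainder is bookkeeping.
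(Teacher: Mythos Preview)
Your argument is correct and matches the paper's approach exactly: the paper itself gives no formal proof, simply declaring the result ``immediate'' from the preceding discussion that $L_{W}$ is a biholomorphic self-map of the open matrix ball sending $W$ to $0$. Your write-up merely spells out those immediate steps, adding the useful explicit check that $CZ+D=(D_{W})^{-1}(I-W^{*}Z)$ is invertible on the ball. One cosmetic point: the base point for the normalization is $t_{0}\in\cR$, not $0$, so strictly you should write $W=S(t_{0})$ and $\widetilde S(t_{0})=0$; the notation $S(0)$ is a slip already present in the paper's statement of the Proposition.
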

	  
	  The following formula for the defect of $S$ in terms of the 
	  defect of $\widetilde S$ will also be useful below.
	  
	  \begin{proposition}   \label{P:Sdefect}
	      Suppose $S \in \cS^{N}(\cR)$ and $\widetilde S \in 
	      \cS^{N}(\cR)_{0}$ are related as in Proposition 
	      \ref{P:normalize}.  Then we have
	      \begin{equation}   \label{Sdefect}
		  I - S(z) S(w)^{*} = D_{S(0)^{*}} (I + \widetilde 
		  S(z) S(0)^{*})^{-1} ( I - \widetilde S(z) 
		  \widetilde S(w)^{*} ) (I + S(0) \widetilde 
		  S(w)^{*})^{-1} D_{S(0)^{*}}.
	\end{equation}
	 \end{proposition}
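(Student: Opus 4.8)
The plan is to substitute the explicit formula \eqref{LWinv} for $L_{S(0)}^{-1}$ into the relation $S = L_{S(0)}^{-1}\circ\widetilde S$ furnished by Proposition \ref{P:normalize}, and then compute $I - S(z)S(w)^*$ directly. Writing $W = S(0)$ for brevity, \eqref{LWinv} gives
\[ S(z) = D_{W^*}(I + \widetilde S(z)W^*)^{-1}(\widetilde S(z) + W)D_W^{-1}, \]
whence, since $D_W$ and $D_{W^*}$ are self-adjoint and $(I + \widetilde S(w)W^*)^* = I + W\widetilde S(w)^*$,
\[ S(w)^* = D_W^{-1}(\widetilde S(w)^* + W^*)(I + W\widetilde S(w)^*)^{-1}D_{W^*}. \]
Multiplying these two and using $D_W^{-1}D_W^{-1} = (I - W^*W)^{-1}$ shows that $S(z)S(w)^*$ carries the outer factors $D_{W^*}(I + \widetilde S(z)W^*)^{-1}$ on the left and $(I + W\widetilde S(w)^*)^{-1}D_{W^*}$ on the right, with inner factor $(\widetilde S(z) + W)(I - W^*W)^{-1}(\widetilde S(w)^* + W^*)$.

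Next I would rewrite the identity matrix with the very same outer factors: since $D_{W^*}^{-1}D_{W^*}^{-1} = (I - WW^*)^{-1}$,
\[ I = D_{W^*}(I + \widetilde S(z)W^*)^{-1}\bigl[(I + \widetilde S(z)W^*)(I - WW^*)^{-1}(I + W\widetilde S(w)^*)\bigr](I + W\widetilde S(w)^*)^{-1}D_{W^*}. \]
Subtracting the expression for $S(z)S(w)^*$ from this, the proposition reduces to the purely algebraic identity
\[ (I + \widetilde S(z)W^*)(I - WW^*)^{-1}(I + W\widetilde S(w)^*) - (\widetilde S(z) + W)(I - W^*W)^{-1}(\widetilde S(w)^* + W^*) = I - \widetilde S(z)\widetilde S(w)^*. \]

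To close, I would expand both triple products and simplify using the intertwining relations $W(I - W^*W)^{-1} = (I - WW^*)^{-1}W$ and $W^*(I - WW^*)^{-1} = (I - W^*W)^{-1}W^*$, together with their consequences $W^*(I - WW^*)^{-1}W = (I - W^*W)^{-1} - I$ and $W(I - W^*W)^{-1}W^* = (I - WW^*)^{-1} - I$. A term-by-term comparison then shows that all contributions involving $(I - WW^*)^{-1}$, $(I - W^*W)^{-1}$, $W$, $W^*$, $\widetilde S(z)$, and $\widetilde S(w)^*$ cancel in pairs, leaving precisely $I - \widetilde S(z)\widetilde S(w)^*$; this is exactly \eqref{Sdefect} with $S(0)$, $\widetilde S(z)$, and $\widetilde S(w)$ in place. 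I do not anticipate any genuine obstacle: the whole argument is a bookkeeping calculation, and the only care required is to keep the half-dozen terms on each side of the last identity organized, which the intertwining relations make routine.
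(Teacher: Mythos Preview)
Your proposal is correct and follows essentially the same route as the paper: both substitute the explicit formula \eqref{LWinv} for $L_{S(0)}^{-1}$, factor out $D_{S(0)^*}(I+\widetilde S(z)S(0)^*)^{-1}$ on the left and its adjoint counterpart on the right, and then verify the remaining inner identity via the intertwining relations between $W$, $W^*$ and the defect inverses. The paper writes the inverses as $(D_{S(0)})^{-2}$, $(D_{S(0)^*})^{-2}$ rather than $(I-W^*W)^{-1}$, $(I-WW^*)^{-1}$ and organizes the bookkeeping slightly differently, but the computation is the same.
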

	 
	 \begin{proof}
	     From the representation \eqref{StotildeS} for 
	     $\widetilde S$ in terms of $S$, we solve for $S$ to get
	     $$
	      S(z) = (L_{S(0)})^{-1}[\widetilde S(z)].
	    $$
	    We now use the explicit formula for $(L_{S(0)})^{-1}$ 
	    determined from equation \eqref{LWinv} to get
	    $$
	    S(z) = D_{S(0)^{*}} (I + \widetilde S(z) S(0)^{*})^{-1} 
	    (\widetilde S(z) + S(0)) (D_{S(0)})^{-1}.
	    $$
	    Hence we get
	    \begin{align}
	&	I - S(z) S(w)^{*}  = 
	I - \left(L_{S(0)^{*}}\right)^{-1}[\widetilde S(z) ] 
\left(	\left(L_{S(0)}\right)^{-1}[\widetilde S(w)]  
\right)^{*}  \notag \\
& = I - D_{S(0)^{*}} (I + \widetilde S(z) S(0)^{*})^{-1} (\widetilde 
S(z) + S(0)) (D_{S(0)})^{-1} \cdot  \notag \\
& \quad \cdot (D_{S(0)})^{-1} (\widetilde S(w)^{*} + 
S(0)^{*}) (I + S(0) \widetilde S(w)^{*})^{-1} D_{S(0)^{*}} \notag  \\
 & = D_{S(0)^{*}} (I + \widetilde S(z) S(0)^{*})^{-1} X
(I + S(0) \widetilde S(w)^{*})^{-1} D_{S(0)^{*}}
\label{Sdefect1}
	\end{align}
	where we set 
 \begin{align}  
     X = & [I + \widetilde S(z) S(0)^{*}] (D_{S(0)^{*}})^{-2} [I + S(0) 
     \widetilde S(w)^{*}]  \notag \\
     & \quad -
     \left[\widetilde S(z) + S(0)\right] (D_{S(0)})^{-2} \left[\widetilde S(w)^{*} + 
     S(0)^{*}\right].
     \label{X}
   \end{align}
   In the computation to follow we use the intertwining relations
   \begin{equation}   \label{intertwine12}
       S(0)^{*} (D_{S(0)^{*}})^{-2} =  (D_{S(0)})^{-2} S(0)^{*}, \quad
       (D_{S(0)^{*}})^{-2} S(0) = S(0) (D_{S(0)})^{-2}.
       \end{equation}
      We now pick up the computation of $X$ in \eqref{X}:
   \begin{align*}
       X & = (D_{S(0)^{*}})^{-2} + \widetilde S(z) S(0)^{*} 
       (D_{S(0)^{*}})^{-2}  \\
       & \quad + (D_{S(0)^{*}})^{-2} S(0) \widetilde 
       S(w)^{*} + \widetilde S(z) S(0)^{*} (D_{S(0)^{*}})^{-2} S(0) 
       \widetilde S(w)^{*} \\
      & \quad  - \widetilde S(z) (D_{S(0)})^{-2} \widetilde S(w)^{*} - S(0) 
       (D_{S(0)})^{-2} \widetilde S(w)^{*} \\
       & \quad -\widetilde S(z) (D_{S(0)})^{-2} S(0)^{*} - S(0) 
       (D_{S(0)})^{-2} S(0)^{*} \\
       & = \left[ (D_{S(0)^{*}})^{-2} + \widetilde S(z) S(0)^{*} 
       (D_{S(0)^{*}})^{-2} \right] \cdot
       \left[ I + S(0) \widetilde S(w)^{*} \right] \\
       & \quad - \left[ S(0) (D_{S(0)})^{-2} + \widetilde S(z) 
       (D_{S(0)})^{-2} \right] \cdot
       \left[ S(0)^{*} + \widetilde S(w)^{*} \right]  \\
       & = (D_{S(0)^{*}})^{-2} - S(0) (D_{S(0)})^{-2} S(0)^{*}  \\
       & \quad + \widetilde S(z)  S(0)^{*} (D_{S(0)^{*}})^{-2} 
       S(0)\widetilde S(w)^{*} - \widetilde S(z) (D_{S(0)})^{-2} \widetilde S(w)^{*} + [\text{ 
       cross terms }]
       \end{align*}
       where we make use of \eqref{intertwine12} to see that the 
       cross terms vanish.  Continuation of the computation of $X$ 
       and again making use of \eqref{intertwine12} then gives:
       \begin{align*}
	   X & =  (D_{S(0)^{*}})^{-2} \left[ I - S(0) S(0)^{*}\right] + 
	   \widetilde S(z) \left[ S(0)^{*} S(0) - I\right] (D_{S(0)})^{-2} 
	   \widetilde S(w)^{*} \\
	   & = I - \widetilde S(z) \widetilde S(w)^{*}
 \end{align*}
 Plugging $X$ back into \eqref{Sdefect1} gives us \eqref{Sdefect} as 
 wanted.
\end{proof}

	With these preliminaries out of the way, we may use the 
        integral representation formula \eqref{Her-int-rep} for a 
	normalized Herglotz function to arrive at the following 
	representation for the defect kernel $I - S(z) S(w)^{*}$ for 
	a normalized Schur-class function $S$.
	To this end, we associate with any point $(\bx, \bw) \in 
	{\mathbb T}^{N}_{\cR}$ the normalized Schur-class function
	\begin{equation}   \label{extremal-Schur}
	    S_{\bx,\bw}(z) = (F_{\bx, \bw}(z) + I)^{-1} (F_{\bx, 
	    \bw}(z) - I)
	 \end{equation}
	 where $F_{\bx, \bw} \in \cH^{N}(\cR)_{I}$ is given by 
	 \eqref{extreme-Her}.

	\begin{theorem}  \label{T:Schur-def}
 Given $S$ in the strict Schur class $\cS^{N}(\cR)$, there is a 
 ${\mathbb C}^{N \times N}$-valued
 function $(z, (\bx, \bw))  \mapsto H_{\bx, \bw}(z)$ on ${\mathbb 
 T}^{N}_{\cR} \times \cR$,   with values bounded and 
measurable in $(\bx, \bw)$ for each fixed $z$ and 
 holomorphic in $z$ for each fixed $(\bx, \bw)$, along with a 
 probability measure  $\nu$ on 
 ${\mathbb T}^{N}_{\cR}$ so that
	    \begin{equation}  \label{Schur-def}
		I- S(z) S(w)^{*} =  
		\int_{{\mathbb T}^{N}_{\cR}} H_{\bx, \bw}(z) 
		\left( I - S_{\bx, \bw}(z) S_{\bx, \bw}(w)^{*} 
		\right) H_{\bx, \bw}(w)^{*}\, {\tt d}\nu(\bx, \bw).
	\end{equation}
	 \end{theorem}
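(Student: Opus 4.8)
The plan is to reduce to the normalized Schur class, apply the Herglotz integral representation of Theorem~\ref{T:Her-int-rep}, and then transport everything back to the Schur side via the Cayley-type identities collected in Propositions~\ref{P:SchurHer}, \ref{P:normalize} and \ref{P:Sdefect}. Concretely, by Proposition~\ref{P:normalize} the function $\widetilde S := L_{S(0)}[S]$ belongs to $\cS^{N}(\cR)_{0}$, and Proposition~\ref{P:Sdefect} rewrites $I - S(z) S(w)^{*}$ as $E(z)\,\big(I - \widetilde S(z)\widetilde S(w)^{*}\big)\, E(w)^{*}$, where $E(z) := D_{S(0)^{*}}\big(I + \widetilde S(z) S(0)^{*}\big)^{-1}$ is holomorphic on $\cR$ (the inverse exists since $\|\widetilde S(z)\|<1$ and $\|S(0)\|<1$). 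So it suffices to establish \eqref{Schur-def} with $I - \widetilde S(z)\widetilde S(w)^{*}$ in place of $I - S(z)S(w)^{*}$, and then absorb the outer factor $E(z)$ into $H_{\bx,\bw}(z)$.

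For the normalized problem, set $F := (I - \widetilde S)^{-1}(I + \widetilde S)$, which lies in $\cH^{N}(\cR)_{I}$ by \eqref{StoF}. By \eqref{F-defect},
\[
 I - \widetilde S(z)\widetilde S(w)^{*} = \tfrac12\,(I - \widetilde S(z))\,\big(F(z) + F(w)^{*}\big)\,(I - \widetilde S(w)^{*}).
\]
Theorem~\ref{T:Her-int-rep} supplies a probability measure $\nu$ on ${\mathbb T}^{N}_{\cR}$ with $F(z) = \int_{{\mathbb T}^{N}_{\cR}} F_{\bx,\bw}(z)\,{\tt d}\nu(\bx,\bw)$, hence $F(z)+F(w)^{*} = \int_{{\mathbb T}^{N}_{\cR}} \big(F_{\bx,\bw}(z) + F_{\bx,\bw}(w)^{*}\big)\,{\tt d}\nu(\bx,\bw)$. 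For each $(\bx,\bw)$ the function $S_{\bx,\bw}$ of \eqref{extremal-Schur} is the Cayley transform \eqref{FtoS} of $F_{\bx,\bw}\in\cH^{N}(\cR)_{I}$, so \eqref{S-defect} gives
\[
 F_{\bx,\bw}(z) + F_{\bx,\bw}(w)^{*} = \tfrac12\,(F_{\bx,\bw}(z)+I)\,\big(I - S_{\bx,\bw}(z) S_{\bx,\bw}(w)^{*}\big)\,(F_{\bx,\bw}(w)^{*}+I).
\]
Inserting the last two displays into the first, and moving the (constant in $(\bx,\bw)$) factors $\tfrac12(I-\widetilde S(z))$ and $(I-\widetilde S(w)^{*})$ through the weak integral, yields
\[
 I - \widetilde S(z)\widetilde S(w)^{*} = \int_{{\mathbb T}^{N}_{\cR}} \widetilde H_{\bx,\bw}(z)\,\big(I - S_{\bx,\bw}(z) S_{\bx,\bw}(w)^{*}\big)\,\widetilde H_{\bx,\bw}(w)^{*}\,{\tt d}\nu(\bx,\bw),
\]
with $\widetilde H_{\bx,\bw}(z) := \tfrac12(I - \widetilde S(z))(F_{\bx,\bw}(z)+I)$; equivalently, using \eqref{useful} for the pair $(F_{\bx,\bw}, S_{\bx,\bw})$, $\widetilde H_{\bx,\bw}(z) = (I - \widetilde S(z))(I - S_{\bx,\bw}(z))^{-1}$. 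Setting $H_{\bx,\bw}(z) := E(z)\,\widetilde H_{\bx,\bw}(z)$ and conjugating by $E$ as above produces \eqref{Schur-def}.

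There remains the regularity of $(z,(\bx,\bw))\mapsto H_{\bx,\bw}(z)$. For fixed $(\bx,\bw)$ it is holomorphic in $z\in\cR$, being a product of $E$, $I-\widetilde S$ and $F_{\bx,\bw}+I$, all holomorphic on $\cR$. For fixed $z$, boundedness in $(\bx,\bw)$ follows since $\{F_{\bx,\bw}(z)\colon(\bx,\bw)\in{\mathbb T}^{N}_{\cR}\}$ lies in the image of the compact set $\cH^{N}(\cR)_{I}$ under evaluation at $z$, hence is bounded; and measurability holds because, with the topology placed on ${\mathbb T}^{N}_{\cR}$ in Section~\ref{S:Herglotz}, $(\bx,\bw)\mapsto \R F_{\bx,\bw}(z) = \int_{\partial\cR}\cP_{z}\,{\tt d}\mu_{\bx,\bw}$ is continuous (weak-$*$ continuity, since $\cP_{z}\in C(\partial\cR)$) and the harmonic conjugate normalized at $t_{0}$ depends continuously on it. The only delicate points are this boundedness/measurability bookkeeping and the (routine) justification that the constant outer factors commute with the weak integral of Theorem~\ref{T:Her-int-rep}; beyond carefully tracking the factors of $\tfrac12$ coming from \eqref{F-defect} and \eqref{S-defect}, no real analytic obstacle arises.
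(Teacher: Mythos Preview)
Your argument is correct and follows essentially the same route as the paper: reduce to the normalized case via Proposition~\ref{P:normalize} and \eqref{Sdefect}, pass to the Herglotz side with \eqref{StoF}, invoke Theorem~\ref{T:Her-int-rep}, and unwind with the Cayley identities \eqref{F-defect}, \eqref{S-defect}, \eqref{useful} to obtain $H_{\bx,\bw}(z) = D_{S(0)^{*}}(I+\widetilde S(z)S(0)^{*})^{-1}(I-\widetilde S(z))(I-S_{\bx,\bw}(z))^{-1}$, exactly the paper's formula. Your explicit verification of the boundedness and measurability of $(\bx,\bw)\mapsto H_{\bx,\bw}(z)$ is an addition that the paper's proof leaves implicit.
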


	 \begin{proof}  We first consider the case where $S$ is in 
	     the normalized Schur class $\cS^{N}(\cR)_{0}$.  Then $F: 
	     = (I - S)^{-1} (I + S)$ is in the normalized Herglotz 
	     class $\cH^{N}(\cR)_{I}$ as explained in Proposition 
	     \ref{P:SchurHer}.  By Theorem \ref{T:Her-int-rep} there 
	     is a probability measure $\nu$ on ${\mathbb T}^{N}_{\cR}$ 
	     so that
	     $$
	     F(z) = \int_{{\mathbb T}^{N}_{\cR}} F_{\bx, \bw}(z) \, 
	     {\tt d}\nu(\bx, \bw).
	     $$
  If $S_{\bx, \bw}(z)$ is given by \eqref{extremal-Schur}, then we 
  know from Proposition \ref{P:SchurHer} that we recover $S_{\bx, 
  \bw}$ from $F_{\bx, \bw}$ according to
  $$
    S_{\bx, \bw}(z) = (F_{\bx, \bw}(z) + I)^{-1} (F_{\bx, \bw}(z) - 
    I).
  $$
  Then we compute
  \begin{align*}
     &  I - S(z) S(w)^{*}  = 2 (F(z) + I)^{-1} (F(z) + F(w)^{*} ) 
      (F(w)^{*} + I)^{-1} \text{( by \eqref{S-defect})}  \\
      & = \frac{1}{2} (I - S(z))  \int_{{\mathbb T}^{N}_{\cR}} 
      2 (I - S_{\bx, bw}(z))^{-1} (I - S_{\bx, \bw}(z) S_{\bx, 
      \bw}(w)^{*}) \, {\tt d}\nu(\bx, \bw)  (I - S(w)^{*}) \\
      & \quad \text{ (where we make use of \eqref{useful} and 
      \eqref{F-defect})}  \\
      & = \int_{{\mathbb T}^{N}_{\cR}} H_{\bx, \bw}(z) (I - S_{\bx, 
      \bw}(z) S_{\bx, \bw}(w)^{*}) H_{\bx, \bw}(w)^{*} \, {\tt 
      d}\nu(\bx, \bw)
      \end{align*}
    where we have set
    $$
    H_{\bx, \bw}(z) = (I - S(z)) (I - S_{\bx, \bw}(z))^{-1}.
    $$
    
    To handle the case where $S \in \cS^{N}(\cR)$ is not necessarily 
    normalized, we proceed as follows.  Write $S(z) = \left( L_{S(0)} 
    \right)^{-1} [\widetilde S(z) ]$ where $\widetilde S$ is in the 
    normalized Schur class $\cS^{N}(\cR)_{0}$.  Then, by the special 
    case of Theorem \ref{T:Schur-def} already proved, we know that 
    there is a probability measure $\nu$ and a function $\widetilde 
    H$ so that 
    $$
    I- \widetilde S(z) \widetilde S(w)^{*} =  
		\int_{{\mathbb T}^{N}_{\cR}} \widetilde H_{\bx, \bw}(z) 
		\left( I - S_{\bx, \bw}(z) S_{\bx, \bw}(w)^{*} 
		\right) \widetilde H_{\bx, \bw}(w)^{*}\, {\tt d}\nu(\bx, \bw).
  $$
  If we now use relation \eqref{Sdefect}, we see that 
  \eqref{Schur-def} holds for $S$ with
  $$
  H_{\bx, \bw}(z) = D_{S(0)^{*}} (I + \widetilde S(z) S(0)^{*})^{-1} 
  \widetilde H_{\bx, \bw}(z).
  $$
  
 \end{proof}
 
 Specializing this result to the scalar-valued case ($N=1$) recovers 
 the following result of Dritschel-McCullough.  To state the result 
 we introduce the scalar counterpart of the functions $S_{\bx, \bw}$ 
 given by \eqref{extremal-Schur}: for each point  $\bx$ in the 
 $\cR$-torus \eqref{Rtorus} 
 let $s_{\bx}$ be the scalar Schur-class function given by
 \begin{equation}  \label{extremal-Schur-scalar}
     s_{\bx}(z) = \frac{ f_{\bx}(z) + 1}{ f_{\bx}(z) - 1}
 \end{equation}
 where $f_{bx} \in \cH(\cR)_{1}$ is given by \eqref{fbx}.
 
 \begin{theorem}  \label{T:Schur-def-scalar}  (See \cite[Proposition 
     2.14]{DM}.)  Given a function $s$ on $\cR$ in the scalar-valued 
     Schur class $\cS(\cR)$, there are complex-valued functions 
     $(\bx, z) \mapsto h_{\bx}(z)$ on ${\mathbb T}_{\cR} \times \cR$, 
     bounded and measurable in $\bx$ for each fixed $z$ and 
     holomorphic in $z$ for each fixed $\bx$, and a positive 
     probability measure on ${\mathbb T}_{\cR}$, such that
     \begin{equation}   \label{Schur-def-scalar}
     1 - s(z) \overline{s(w)} = \int_{{\mathbb T}_{\cR}} h_{\bx}(z) 
     (1 - s_{\bx}(z) \overline{s_{\bx}(w)})\overline{h_{\bx}(w)}\, 
     {\tt d}\nu(\bx)
     \end{equation}
     
 \end{theorem}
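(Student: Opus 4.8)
The plan is to obtain Theorem \ref{T:Schur-def-scalar} as the specialization of Theorem \ref{T:Schur-def} to $N = 1$. The one point requiring care is the identification of the matrix $\cR$-torus ${\mathbb T}^{N}_{\cR}$, for $N = 1$, with the $\cR$-torus ${\mathbb T}_{\cR}$ of \eqref{Rtorus}. For $N=1$, Theorem \ref{T:extreme-scalar-Her} (refined by the converse part of Theorem \ref{T:extreme-Her}) says that every extreme point of $\cC(\partial \cR, 1, \bphi)$ has the form \eqref{mu-form-scalarHer} for a unique $(m+1)$-tuple $\bx = (x_{0}, \dots, x_{m}) \in {\mathbb T}_{\cR}$ consisting of exactly one point from each boundary component, and that the positive weights $w^{\bx}_{0}, \dots, w^{\bx}_{m}$ are then uniquely determined by $\bx$. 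Thus in the scalar case the weight data $\bw$ is redundant, ${\mathbb T}^{1}_{\cR}$ is canonically identified with ${\mathbb T}_{\cR}$ (with matching topologies, by the remarks preceding Theorem \ref{T:scalarHer-int-rep}), the functions $F_{\bx, \bw}$ of \eqref{extreme-Her} become the functions $f_{\bx}$ of \eqref{fbx}, and the functions $S_{\bx, \bw}$ of \eqref{extremal-Schur} become the functions $s_{\bx}$ of \eqref{extremal-Schur-scalar}.

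With this dictionary, I would apply Theorem \ref{T:Schur-def} to $s$ regarded as a $1 \times 1$ matrix function in $\cS^{1}(\cR)$. This produces a scalar function $(z, \bx) \mapsto H_{\bx}(z)$ on ${\mathbb T}_{\cR} \times \cR$, bounded and measurable in $\bx$ for fixed $z$ and holomorphic in $z$ for fixed $\bx$, together with a probability measure $\nu$ on ${\mathbb T}_{\cR}$, such that \eqref{Schur-def} holds; since for $1 \times 1$ matrices the adjoint is ordinary complex conjugation, setting $h_{\bx} = H_{\bx}$ turns \eqref{Schur-def} verbatim into \eqref{Schur-def-scalar}, and the asserted regularity of $h_{\bx}$ is inherited from that of $H_{\bx}$. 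Equivalently, one can avoid invoking the matrix machinery and simply rerun the proof of Theorem \ref{T:Schur-def} in the scalar setting, using the scalar integral representation Theorem \ref{T:scalarHer-int-rep} in place of Theorem \ref{T:Her-int-rep}: for $s \in \cS(\cR)_{0}$ one writes $f = (1-s)^{-1}(1+s) = \int_{{\mathbb T}_{\cR}} f_{\bx} \, {\tt d}\nu$ and feeds this into the scalar forms of \eqref{S-defect}, \eqref{useful}, \eqref{F-defect} to get \eqref{Schur-def-scalar} with $h_{\bx}(z) = (1 - s(z))(1 - s_{\bx}(z))^{-1}$, and for general strict $s$ one first normalizes by the scalar M\"obius map $L_{s(0)}$ and applies the scalar version of Proposition \ref{P:Sdefect}, exactly as in the proof of Theorem \ref{T:Schur-def}.

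I do not expect a genuine obstacle: the analytic content is already packaged in Theorem \ref{T:Schur-def} and, for the direct route, in the computation behind it together with Theorem \ref{T:scalarHer-int-rep}. The only item that needs explicit checking is the identification of ${\mathbb T}^{1}_{\cR}$ with ${\mathbb T}_{\cR}$ as topological parametrizations of $\partial_{e}\cC(\partial \cR, 1, \bphi)$, so that a probability measure $\nu$ on one is the same datum as a probability measure on the other; this is precisely what Theorem \ref{T:extreme-scalar-Her} and the topological identification recorded before Theorem \ref{T:scalarHer-int-rep} supply. Finally, should one wish to state the result for the full (non-strict) scalar Schur class $\cS(\cR)$ rather than the strict one, a routine normal-families and weak-$*$ compactness argument applied to the dilates $r s$ as $r \uparrow 1$ reduces it to the strict case treated above.
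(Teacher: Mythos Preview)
Your proposal is correct and matches the paper's own approach: the paper simply states that Theorem \ref{T:Schur-def-scalar} is obtained by specializing Theorem \ref{T:Schur-def} to $N=1$, without giving a separate proof. Your additional care in explicitly identifying ${\mathbb T}^{1}_{\cR}$ with ${\mathbb T}_{\cR}$ via Theorem \ref{T:extreme-scalar-Her} is a welcome clarification of a point the paper leaves implicit.
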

 
 \begin{remark}  \label{R:minimal}
     {\em  We note that it is not possible to use a smaller closed 
     subset of ${\mathbb T}^{N}_{\cR}$ in the integral representation 
     \eqref{Her-int-rep} and still have the representation hold for 
     all $F \in \cH^{N}(\cR)_{I}$, almost by the definition of 
     extreme point.  However some reductions are always possible in 
     the decomposition \eqref{Schur-def}.  Note that we have already 
     imposed the normalization that $S_{\bx, \bw}(t_{0}) = 0$ for all 
     $(\bx, \bw)$.  In addition we note that the expression $I - 
     S_{\bx,\bw}(z) S_{\bx, \bw}(w)^{*}$ is unchanged if we replace 
     $S_{\bx,\bw}(z)$ by $S_{\bx, \bw}(z) U$ with $U$ a unitary $N 
     \times N$ matrix.   This means that we may restrict the integral 
     in \eqref{Schur-def} to points $(\bx, \bw)$ such that $S_{\bx 
     \bw}(\zeta_{0}) = I_{N}$ for some point $\zeta_{0}$ in $\partial 
     \cR$ (e.g., $\zeta_{0} \in \partial_{0}$) and consider the 
     integral over this smaller set $\widetilde {\mathbb T}^{N}_{\cR}$.  In 
     special situations for the $N=1$ case (see \cite{DM-Con} and 
     \cite{DP}), there are results proven that, after these 
     reductions, there is no proper closed subset 
     $\widetilde{\widetilde {\mathbb T}}_{\cR}$ of $\widetilde {\mathbb T}_{\cR}$ 
     for which a representation of the form \eqref{Schur-def-scalar} 
     can hold with ${\mathbb T}_{\cR}$ replaced by 
     $\widetilde{\widetilde {\mathbb T}}_{\cR}$.
     For the case $N > 1$, our description of the set ${\mathbb 
     T}^{N}_{\cR}$ (or of $\widetilde {\mathbb T}_{\cR}$) is not as 
     explicit as in the $N=1$ case, so as of this writing it is not at 
     all clear how to arrive at such minimality results for the 
     matrix-valued case.
    }\end{remark}
 
 \begin{remark}  \label{R:testfunc} {\em In \cite{DM} the authors go 
     on to use the general theory of the generalized Schur class 
     associated with a collection $\Psi$ of test functions (see 
     \cite{DMM, DM-Con}) to identify the Schur class $\cS(\cR)$ over 
     $\cR$ with the Schur class $\cS_{\Psi}$ associated with the 
     collection of test functions $\Psi = \{ s_{\bx} \colon \bx \in 
     {\mathbb T}_{\cR} \}$ and thereby also to obtain transfer-function 
     realizations for the class $\cS(\cR)$.  These results 
     combined with Theorem \ref{T:Schur-def} suggest that the 
     matrix-valued Schur class $\cS^{N}(\cR)$ is connected in a 
     similar way with the collection of matrix-valued test functions 
     ${\boldsymbol \Psi} = \{ S_{\bx, \bw} \colon (\bx, \bw) \in 
     {\mathbb T}^{N}_{\cR}\}$.  This is indeed the case (see 
     \cite{Mo, BG}).
     } \end{remark}

  \section{The spectral set problem}   \label{S:spectral}
  
  Let $\cR$ denote a domain in the complex plane ${\mathbb C}$ with 
  boundary $\partial \cR$ with closure $\cR^{-}$.  An operator $T$ on 
  a complex Hilbert space $\cH$ is said to have $\cR^{-}$ as a {\em 
  spectral set} if the spectrum $\sigma(T)$ of $T$ is contained in 
  $\cR^{-}$ and 
  $$
  \| f(T) \| \le \| f\|_{\cR} = \sup \{ |f(z)| \colon z \in \cR \}
  $$
  for every rational function $f$ with poles off of $\cR^{-}$, where 
  $f(T)$ can be defined by the Riesz functional calculus or simply as 
  $f(T) = p(T) q(T)^{-1}$ when $f$ is written as the ratio of 
  polynomials $f(z) = \frac{p(z)}{q(z)}$.  The operator $T$ is said 
  to have a $\partial \cR$-normal dilation if there exists a Hilbert 
  space $\cK$ containing $\cH$ as a subspace so that
  $$
    f(T) = P_{\cH} f(N) |_{\cH}
    $$
   for every rational function $f$ with poles off of $\cR^{-}$ (where 
   $P_{\cH}$ is the orthogonal projection of $\cK$ onto $\cH$.
   It is easily seen that if $T$ has a $\partial \cR$-normal 
   dilation, then $\cR^{-}$ is a spectral set for $T$.  The converse 
   question can be reformulated as:
   
   \smallskip
   \noindent 
   {\em Given that $\cR^{-}$ is a spectral set for $T$, 
   does it follow that $T$ has a $\partial \cR$-normal dilation?}
   
   \smallskip
   \noindent
   This has become known as the {\em spectral-set question} for $\cR$ (see 
   \cite{ArvII}).

   For the case of the unit disk $\cR = {\mathbb D}$, the von Neumann 
   inequality combined with the Sz.-Nagy dilation theorem implies a 
   positive answer to the spectral set question.  For the case where 
   $\cR = {\mathbb A}$ is an annulus, 
  it is a result of Agler \cite{Agler-annulus} (see also \cite{McC95}) that the spectral-set 
  question again has a positive answer.  However, for the case of a 
  multiply-connected domain with at least two holes,   more recent 
  work of Agler-Harland-Raphael \cite{AHR} and Dritschel-McCullough 
  \cite{DM} give two complementary approaches to showing that the 
  spectral-set question has a negative solution.  In this section we 
  discuss briefly how the ideas of this paper relate to the spectral 
  set question.
  
  In \cite{ArvII} Arveson obtained a reformulation of the spectral 
  set question which had profound influence on subsequent work.  For 
  our purposes it is convenient to assume that $\sigma(T)$ is 
  contained in the open domain $\cR$ rather than in $\cR^{-}$; in 
  this case we can use the standard Riesz holomorphic functional 
  calculus to define $s(T) \in \cL(\cH)$ for any holomorphic function 
  on $\cR$, in particular, for $s$ in the Schur class $\cS(\cR)$.  
  Then the condition that $T$ has $\cR^{-}$ as a spectral set can be 
  reformulated as: {\em for any $s \in \cS(\cR)$,  $\| s(T) \| \le 1$.}
  By the Arveson-Stinespring dilation theory (see \cite{Arv}), the 
  condition that $T$ have a $\partial \cR$-normal dilation can be 
  reformulated as: {\em for any $S \in \cS^{N}(\cR)$ ($N = 1,2, 
  \dots$),  $\|S(T) \| \le 1$.}
  Here, for $S = [s_{ij}]_{i,j=1}^{N}$ in the matrix-valued Schur 
  class $\cS^{N}(\cR)$, we define $S(T)$ by
  $$
     S(T) = [s_{ij}(T)] \in \cL(\cH^{N}).
   $$
   Then the Arveson reformulation of the spectral set question for 
   $\cR$ becomes:
   
   \smallskip 
   \noindent 
   {\em Given $T \in \cL(\cH)$ such that $\|s(T)\| \le 1$ for all $s 
   \in \cS(\cR)$, does it follow that $\|S(T) \| \le 1$ for all $S 
   \in \cS^{N}(\cR)$?}
   
   \smallskip
   \noindent
   
   From the result \eqref{Sdefect} of Proposition \ref{P:Sdefect}, we 
   see that in fact one may restrict to the normalized Schur classes 
   $\cS(\cR)_{0}$ and $\cS^{N}(\cR)_{0}$ in the above condition.
   We may then use relations \eqref{F-defect} and \eqref{S-defect} to 
   get the following reformulation:
   
   \smallskip
   \noindent
   {\em Given $T \in \cL(\cH)$ such that $\R f(T) \ge 0$ for all $f 
   \in \cH(\cR)_{1}$, does it follow that $\R F(T) \ge 0$ for all $F 
   \in \cH^{N}(\cR)_{I}$ for any $N = 1,2, \dots$ ?}
   
   \smallskip
   \noindent
   By plugging $T$ into the decomposition \eqref{Schur-def-scalar} with 
   the Riesz holomorphic functional calculus, we see that to check 
   whether $\cR^{-}$ is a spectral set for $T$, it suffices to check the 
   condition $\|s(T) \| \le 1$ only for $s=s_{\bx}$ for each $\bx \in 
   {\mathbb T}_{\cR}$. By using the relations \eqref{F-defect} and 
   \eqref{S-defect}, an equivalent condition for $\cR^{-}$ to be a 
   spectral set for $T$ is that $\R f_{\bx}(T) \ge 0$ for each $\bx 
   \in {\mathbb T}_{\cR}$ (see \cite[Theorem 1.6.16]{AHR}).  
  Similarly, to check that $T$ has a $\partial 
   \cR$-normal dilation, it suffices to check the condition $\|S(T)\| 
   \le 1$ only for $S \in \cS^{N}(\cR)_{0}$ of the form $S = S_{\bx, 
   \bw}$ for $(\bx, \bw) \in {\mathbb T}^{N}_{\cR}$.  By using 
   the relations \eqref{F-defect} and \eqref{Sdefect}, it is 
   equivalent to check that $\R F_{\bx, \bw}(T) \ge 0$ for each 
   $(\bx, \bw) \in {\mathbb T}^{N}_{\cR}$.
   We thus 
   come to the following equivalent reformulations of the spectral set question:
   
   \smallskip
   \noindent
   {\em Given $T \in \cL(\cH)$ for which $\|s_{\bx}(T)\| \le 1$ 
   (respectively, $\R f_{\bx}(T) \ge 0$) for all $\bx \in {\mathbb 
   T}_{\cR}$, does it follow that $\| S_{\bx, \bw}(T) \| \le 1$ 
   (respectively, $\R F_{\bx, \bw}(T) \ge 0$) for all $(\bx, \bw) \in 
   {\mathbb T}^{N}_{\cR}$?}
   
   \smallskip
   \noindent
   Let us suppose that all extremal measures $\mu$ for 
   the cone $\cC(\partial \cR, N, \bphi)$ are special (see Remark 
   \ref{R:special}).  Thus any $\mu_{\bx, \bw} \in \partial_{e} 
   \cC(\cR, N, \phi)$ has the form $\mu_{\bx, \bw} = \sum_{k=1}^{n} 
   \mu_{\bx_{k}} W_{k}$ for some points $\bx_{k} \in {\mathbb 
   T}_{\cR}$ and for  matrix weights $W_{k}$ such that 
   \{$\operatorname{Ran} W_{k} \colon 1 \le k \le n\}$ is a weakly 
   independent family of subspaces.  We then see that
   \begin{align*}
   F_{\bx, \bw}(z) + F_{\bx, \bw}(w)^{*} &  = \sum_{k=1}^{n} 
   (f_{\bx_{k}}(z) + \overline{f_{\bx_{k}}(w)}) W_k \\
   & = \sum_{k=1}^{n} W_k^{\frac{1}{2}} \left[ (f_{\bx_{k}}(z) + 
   \overline{f_{\bx_{k}}(w)}) I_{N} \right] W_k^{\frac{1}{2}}.
   \end{align*}
   The assumption that $\R f_{\bx}(T) \ge 0$ for each $\bx \in 
   {\mathbb T}_{\cR}$ then leads to the conclusion that $\R F_{\bx, 
   \bw}(T) \ge 0$ for each $(\bx, \bw) \in {\mathbb T}^{N}_{\cR}$, 
   and hence that the spectral-set question has an affirmative answer 
   for $\cR$, in contradiction with the results of \cite{AHR, DM}.
   These observations lead to the following corollary concerning the 
   structure of the set of extreme points for a cone  of the form 
   $\cC(\partial \cR, N, \bphi)$ with $\bphi$ as in \eqref{basis}.
   
   \begin{corollary}  \label{C:special}
       There are multiply-connected domains $\cR$ (with at 
       least two holes) such that not all extremal measures $\mu$ for 
       $\cC(\cR, N, \bphi)$ are special (as defined in 
       Remark \ref{R:special}).

       \end{corollary}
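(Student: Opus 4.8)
The plan is to deduce the statement by contradiction from the reformulations of the spectral-set question assembled above, using the negative solution of \cite{AHR, DM} as the external input; the argument simply makes precise the computation sketched in the paragraph preceding the statement. So fix a bounded domain $\cR$ whose boundary consists of $m+1 \ge 3$ smooth Jordan curves, together with $\bphi$ as in \eqref{basis}, and suppose, toward a contradiction, that for every natural number $N$ every extreme point of $\cC(\partial \cR, N, \bphi)$ is special in the sense of Remark \ref{R:special}. Fix an operator $T \in \cL(\cH)$ with $\sigma(T) \subset \cR$ for which $\cR^{-}$ is a spectral set; by the reformulation recorded above, this is equivalent to $\R f_{\bx}(T) \ge 0$ for all $\bx \in {\mathbb T}_{\cR}$, where $f_{\bx} \in \cH(\cR)_{1}$ is as in \eqref{fbx} and $f_{\bx}(T)$ is given by the Riesz holomorphic functional calculus (legitimate since $\sigma(T) \subset \cR$). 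The goal is to show that $\R F(T) \ge 0$ for every $F \in \cH^{N}(\cR)_{I}$ and every $N$, which by the Arveson-type reformulation says that $T$ has a $\partial \cR$-normal dilation; since $T$ was arbitrary among operators having $\cR^{-}$ as a spectral set, this would give an affirmative answer to the spectral-set question for $\cR$, contradicting \cite{AHR, DM}.

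The first step is to handle the extreme points of $\cH^{N}(\cR)_{I}$. Given $(\bx, \bw) \in {\mathbb T}^{N}_{\cR}$, the function $F_{\bx, \bw}$ of \eqref{extreme-Her} corresponds to the extreme measure $\mu_{\bx, \bw} \in \partial_{e} \cC(\partial \cR, N, \bphi)$, which by hypothesis is special: $\mu_{\bx, \bw} = \sum_{k=1}^{n} \mu_{\bx_{k}} W_{k}$ with $\bx_{k} \in {\mathbb T}_{\cR}$, $W_{k} \ge 0$, and $\sum_{k=1}^{n} W_{k} = I_{N}$. Taking Poisson integrals and using uniqueness of the harmonic conjugate normalized at $t_{0}$ gives $F_{\bx, \bw}(z) = \sum_{k=1}^{n} f_{\bx_{k}}(z) W_{k}$, so that
\[
F_{\bx, \bw}(z) + F_{\bx, \bw}(w)^{*} = \sum_{k=1}^{n} W_{k}^{1/2} \bigl[ (f_{\bx_{k}}(z) + \overline{f_{\bx_{k}}(w)}) I_{N} \bigr] W_{k}^{1/2}.
\]
Inserting $T$ via the Riesz functional calculus, $F_{\bx, \bw}(T) = \sum_{k=1}^{n} f_{\bx_{k}}(T) \otimes W_{k}$ acting on $\cH^{N} \cong \cH \otimes {\mathbb C}^{N}$, and therefore $\R F_{\bx, \bw}(T) = \sum_{k=1}^{n} \R f_{\bx_{k}}(T) \otimes W_{k} \ge 0$, since each $\R f_{\bx_{k}}(T) \ge 0$ and each $W_{k} \ge 0$. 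Thus positivity passes to every extreme point of $\cH^{N}(\cR)_{I}$.

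The second step is to pass from the extreme points to all of $\cH^{N}(\cR)_{I}$ via the integral representation \eqref{Her-int-rep}: an arbitrary $F \in \cH^{N}(\cR)_{I}$ is a weak integral $\int_{{\mathbb T}^{N}_{\cR}} F_{\bx, \bw}\, {\tt d}\nu(\bx, \bw)$ against a probability measure $\nu$. Since the representation converges uniformly on compact subsets of $\cR$, with bounds uniform over the compact index set ${\mathbb T}^{N}_{\cR}$, one may integrate term-by-term against the resolvent of $T$ appearing in the Riesz functional calculus and obtain $\R F(T) = \int_{{\mathbb T}^{N}_{\cR}} \R F_{\bx, \bw}(T)\, {\tt d}\nu(\bx, \bw) \ge 0$. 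This yields the promised contradiction and proves the corollary.

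The routine portion is the second step; the point to watch is precisely the justification that inserting $T$ into the weak integral \eqref{Her-int-rep} is legitimate and commutes with forming real parts, which rests on the uniform-convergence and uniform-boundedness features of that representation (coming in turn from the normal-families and Harnack estimates underlying the compactness of $\cH^{N}(\cR)_{I}$). The conceptual heart is the first step: being special is exactly the structural property that would permit lifting the scalar inequalities $\R f_{\bx}(T) \ge 0$ to the matrix inequalities $\R F_{\bx, \bw}(T) \ge 0$, so the failure of this lift discovered in \cite{AHR, DM} forces some extreme point of some $\cC(\partial \cR, N, \bphi)$ to be non-special.
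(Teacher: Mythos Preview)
Your argument is correct and follows the same route as the paper: assume every extremal measure is special, use this to deduce $\R F_{\bx,\bw}(T)\ge 0$ from $\R f_{\bx}(T)\ge 0$, and contradict the negative solution of the spectral-set question from \cite{AHR, DM}. The only cosmetic difference is that the paper packages your ``second step'' into the earlier reformulation (it suffices to test $\R F_{\bx,\bw}(T)\ge 0$ on extreme points, justified via the Schur-side decomposition \eqref{Schur-def}) rather than integrating directly on the Herglotz side as you do.
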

       
       \begin{remark}  \label{R:disk}  {\em
       We note that for the case where $\cR$ is the unit disk 
       ${\mathbb D}$, it is the case that all extremal measures are 
       special; hence the argument leading to Corollary \ref{C:special} yields 
       yet another proof that the spectral-set question for $\cR = {\mathbb D}$
       has an affirmative answer. 
       
       Alternatively, one could simply use the Herglotz integral 
       representation formula for the matrix-valued Herglotz class
       $\cH^{N}({\mathbb D})_{I}$ and work with quantum probability 
       measures, i.e., positive operator measures on $\mathbb T$ 
       with $\nu({\mathbb T}) = I_{N}$, rather than probability measures:
      \begin{equation}   \label{diskHer}
      F(z) = \int_{{\mathbb T}} \frac{ \zeta + z}{\zeta -z}\, {\tt 
      d}\nu(\zeta)
      \end{equation}
      The representation \eqref{scalarHer-int-rep} for the 
      scalar-valued Herglotz class over $\cR$ suggests a 
      representation for $F \in \cH^{N}(\cR)_{I}$ analogous to 
      \eqref{diskHer}:
      $$
      F(z) = \int_{{\mathbb T}_{\cR}} f_{\bx}(z) {\tt d}\nu(\bx)
      $$
      with $\nu$ a quantum probability measure.
      However the argument above leading to Corollary \ref{C:special} 
      shows that such a representation cannot possibly be true in 
      general when $\cR$ has at least two holes.
   }\end{remark}
   
   \begin{remark} \label{R:annulus}
   {\em We have not determined if all extremal 
       measures are special for the case of an annulus $\cR = 
       {\mathbb A}$.  However there is a somewhat different way to 
       reduce the matrix-valued Herglotz (or Schur) class to the 
       scalar Herglotz class for the case $\cR = {\mathbb A}$ due to 
       McCullough \cite{McC95} which we now describe.
       
       For ${\mathbb A}$ equal to the annulus ${\mathbb A}_{q} = \{ z 
       \in {\mathbb C} \colon q < |z| < 1\}$ (where $0<q<1$), it is 
       shown in \cite{McC95} that there is a curve $\{ \varphi_{\zeta} 
       \colon  \zeta \in {\mathbb T} \}$  of inner functions 
       over ${\mathbb A}_{q}$ (constructed from the Ahlfors function 
       $\varphi$ based at the point $\sqrt{q} \in {\mathbb A}_{q}$) 
       with the following special property.  First as a matter of 
       notation, for $t = (t_{1}, \dots, t_{N}) \in {\mathbb T}^{N}$, 
       let us let $\Phi_{t}(z)$ be the diagonal matrix inner function 
       over ${\mathbb A}_{q}$ given by
       \begin{equation}   \label{Phit}
       \Phi_{t}(z) = \begin{bmatrix} \varphi_{t_{1}}(z) & & \\ & \ddots 
       & \\ & & \varphi_{t_{N}}(z) \end{bmatrix} \text{ if } t = (t_{1}, 
       \dots, t_{N}),
       \end{equation}
       and, for $U$ a unitary $N \times N$ matrix and $t \in {\mathbb 
       T}^{N}$, let us set
       $$
       R_{U,t}(z) = (I_{N} + U \Phi_{t}(z)) ( I - U\Phi_{t}(z))^{-1}.
       $$
       Then one of the main results from \cite{McC95} is:  {\em given 
       a point $(\bx, \bw) \in {\mathbb T}^{N}_{{\mathbb A}_{q}}$ 
       with associated extremal Herglotz function $F_{\bx, \bw}$, 
       there is a $t \in {\mathbb T}^{N}$, an $N \times N$ unitary 
       matrix $U$, and an invertible $N \times N$ matrix $X$ so that
       \begin{equation} \label{Fxwrep}
	   \R F_{\bx, \bw}(z) = X(\R R_{U,t}(z)) X^{*}
	\end{equation}
	for all $z \in {\mathbb A}_{q}$.}
	Note that an easy computation gives 
	$$
	 R_{U,t}(z) + R_{U,t}(w)^{*} =
	2 (I - U \Phi_{t}(z))^{-1} U \left[ I - \Phi_{t}(z) 
	\Phi_{t}(w)^{*} \right] U^{*} (I - \Phi_{t}(w)^{*} 
	U^{*})^{-1}.
	$$
	
	Now suppose that $T \in \cL(\cK)$ with  $\sigma(T) \subset 
	{\mathbb A}_{q}$ has ${\mathbb A}_{q}$ as a spectral set, so 
	$\|s(T)\| \le 1$ for all $s $ in the scalar Schur class 
	$\cS({\mathbb A}_{q})$.  Then an immediate consequence of the 
	formula \eqref{Fxwrep} combined with the diagonal form 
	\eqref{Phit} of $\Phi_{t}$ is that it then follows that $\R 
	F_{\bx, \bw}(T) \ge 0$ as well, and thus the spectral-set 
	question has an affirmative answer for the annulus ${\mathbb 
	A}_{q}$.  This line of reasoning arguably provides some simplifications 
	to the solution of the	spectral set question for the annulus 
	given in \cite{McC95}.
        }\end{remark}

\end{document}